\journal{xxx}
\theoremstyle{plain}
\newtheorem{theorem}{Theorem}
\newtheorem{corollary}{Corollary}
\newtheorem{proposition}{Proposition}
\newtheorem{lemma}{Lemma}
\theoremstyle{definition}
\newtheorem{definition}{Definition}
\newtheorem{example}{Example}
\newtheorem{hyp}{Assumption}
\theoremstyle{remark}
\newtheorem{remark}{Remark}
\newcommand{\Bc}{\mathcal B}
\newcommand{\Cc}{\mathcal C}
\newcommand{\Dc}{\mathcal D}
\newcommand{\Ec}{\mathcal E}
\newcommand{\Fc}{\mathcal F}
\newcommand{\Gc}{\mathcal G}
\newcommand{\Hc}{\mathcal H}
\newcommand{\Kc}{\mathcal K}
\newcommand{\Lc}{\mathcal L}
\newcommand{\Mc}{\mathcal M}
\newcommand{\Oc}{\mathcal O}
\newcommand{\Pc}{\mathcal P}
\newcommand{\Uc}{\mathcal U}
\newcommand{\Vc}{\mathcal V}
\newcommand{\Xc}{\mathcal X}
\newcommand{\Yc}{\mathcal Y}
\newcommand{\Zc}{\mathcal Z}
\newcommand{\Bb}{\mathbb B}
\newcommand{\Eb}{\mathbb E}
\newcommand{\Nb}{\mathbb N}
\newcommand{\Pb}{\mathbb P}
\newcommand{\Qb}{\mathbb Q}
\newcommand{\Rb}{\mathbb R}
\newcommand{\veps}{\varepsilon}
\newcommand{\vphi}{\varphi}
\newcommand{\Abs}[1]{\left\lvert #1 \right\rvert}
\newcommand{\brak}[2]{\langle #1 , #2 \rangle}
\newcommand{\Enbrace}[1]{\left\{ #1 \right\}}
\newcommand{\lb}{\langle}
\newcommand{\rb}{\rangle}
\newcommand{\Id}{\mathrm{Id}}				
\newcommand{\indic}[1]{\mathbf{1}_{#1}}			
\newcommand{\prob}[1]{\mathbb{P}\big{\{}#1\big{\}}} 	
\newcommand{\esp}[1]{\mathbb{E}\left[#1\right]}		
\newcommand{\Card}{\operatorname{Card}}			
\newcommand{\supp}{\mathrm{supp}}			
\newcommand{\ds}{\displaystyle}
\newcommand{\ie}{{\it i.e.}}
\newcommand{\as}{{\it a.s.}}
\renewcommand{\ae}{{\it a.e.}}
\def\ds{\displaystyle}
\begin{document}

\begin{frontmatter}

\title{Boundary value for a nonlinear transport equation emerging from a stochastic coagulation-fragmentation type model \tnoteref{label1}}
\tnotetext[label1]{This work has been supported by ANR-14-CE25-0003 (Julien Deschamps), FONDECYT Grant no. 3130318 (Erwan Hingant) and INRA (Romain Yvinec)}

\author[JD]{Julien Deschamps}
\ead{deschamps@dima.unige.it}

\author[EH]{Erwan Hingant}
\ead{ehingant{@}ci2ma.udec.cl}

\author[RY]{Romain Yvinec}
\ead{romain.yvinec@tours.inra.fr}

\address[JD]{DIMA, Universit\`a degli Studi di Genova, Italy.}
\address[EH]{CI\textsuperscript{2}MA, Universidad de Concepci\'on, Chile.}
\address[RY]{BIOS group, INRA, UMR85, Unit\'e Physiologie de la Reproduction et des Comportements, F-37380 Nouzilly, France;
CNRS, UMR7247, F-37380 Nouzilly, France; Universit\'e François Rabelais, 37041 Tours, France; IFCE, Nouzilly,
F-37380 France.}

\begin{abstract}
We investigate the connection between two classical models of phase transition phenomena, the (discrete size) stochastic Becker-D\"oring, a continous time Markov chain model, and the (continuous size) deterministic Lifshitz-Slyozov model, a nonlinear transport partial differential equation. For general coefficients and initial data, we introduce a scaling parameter and prove that the empirical measure associated to the stochastic Becker-D\"oring system converges in law to the weak solution of the Lifshitz-Slyozov equation when the parameter goes to 0. Contrary to previous studies, we use a weak topology that includes the boundary of the state space (\ie\ the size $x=0$) allowing us to rigorously derive a boundary value for the Lifshitz-Slyozov model in the case of incoming characteristics. The condition reads $\lim_{x\to 0} (a(x)u(t)-b(x))f(t,x) = \alpha u(t)^2$ where $f$ is the volume distribution function, solution of the Lifshitz-Slyozov equation, $a$ and $b$ the aggregation and fragmentation rates, $u$ the concentration of free particles and $\alpha$ a nucleation constant emerging from the microscopic model. It is the main novelty of this work and it answers to a question that has been conjectured or suggested by both mathematicians and physicists. We emphasize that this boundary value depends on a particular scaling (as opposed to a modeling choice) and is the result of a separation of time scale and an averaging of fast (fluctuating) variables.  
\end{abstract}

\begin{keyword}
Limit theorem \sep Averaging \sep Stochastic Becker-D\"oring  \sep Lifshitz-Slyozov equation \sep Boundary value \sep  Measure-valued solution 
\MSC[2010] 60H30 \sep 60B12 \sep 35F31 \sep 35L50 \sep 82C26
\end{keyword}

\end{frontmatter}
\section{Introduction}\label{intro}

This papers addresses the mathematical connection between two classical models of phase transition phenomena describing different stages of cluster growth. 

The first one is the Stochastic Becker-D\"oring (SBD) model \cite{EBELING,BHATT,DLC,Yvinec2012}, representing the microscopic stages. This model is a subclass of general finite-particle stochastic coagulation-fragmentation models \cite{Aldous1999} and it corresponds to the continous time Markov chain version of the so-called (deterministic) Becker-D\"oring (BD) model \cite{Becker35,Penrose2001,Wattis2008}. In this model, clusters of particles may increase or decrease their size (number of particles in the cluster) one-by-one by capturing (aggregation process) or shedding (fragmentation process) one particle, according to the set of chemical reactions

\begin{equation}\label{eq:chemreact_intro}
  \ds C_1 + C_i \ds \xrightleftharpoons[b_{i+1}]{a_{i}}  \ds C_{i+1} \,, \quad i\geq 1\,,
\end{equation}
where $C_i$ stands for the clusters consisting of $i$ particles and $C_1$ the free particle. Here, the coefficients $a_i>0$ and $b_{i+1}> 0$ denote respectively the rates of aggregation and the rates of fragmentation. The SBD model is defined as a Markov chain on a finite subset of a lattice. Choose a (possibly random, but almost surely finite) parameter $M\geq 2$ that gives the total number of particles in the system (number of free particles and number of particles in the clusters). Since the mass of each particle can be fixed at $1$ without loss of generality, this quantity is also called the total mass of the system. The state space of the process is given by
\begin{equation*}
\Ec := \Big\{ (C_i)_{i \geq 1} \subset \Nb \, : \,\sum_{i\geq 1} i C_i = M \Big\}\,.
\end{equation*}
The  reactions of aggregation and fragmentation in \eqref{eq:chemreact_intro} are the  processes between clusters of two successive sizes, with rate (intensity of a Poisson process) $a_1C_1(C_1-1)$ for $i=1$, $a_i C_1 C_{i}$, $i\geq 2$, and $b_{i+1}C_{i+1}$, $i \geq 1$, respectively (law of mass action). This set of kinetics reactions \eqref{eq:chemreact_intro} completely defines a well-posed model. Indeed, when the initial condition $(C_i(0))_{i\geq1}$ belongs to $\Ec$, it is then trivial to see that $(C_i(t))_{i\geq1}$ can be re-written as a Markov chain in a finite state space ($\Card(\Ec) <\infty$), for which existence for all times is guaranteed.

\medskip

The second model is the Lifshitz-Slyozov (LS) model \cite{LS61}, and describes the cluster growth at a macroscopic scale. Accordingly, the size of the clusters are represented by a continuously varying variable $x >0$. The LS model consists in a partial differential equation (of nonlinear transport type) for the time evolution of the volume distribution function $f(t,x)$ of clusters of size $x$, together with an equation stating the conservation of matter,

\begin{equation}\label{eq:LS}
 \begin{array}{ll}
  \ds \partial_t f (t,x)+ \partial_x [ (a(x)u(t) - b(x))f(t,x) ] =0\,, & t\geq0\,, \ x>0\,, \\[1.5em] 
  \ds u(t)  +  \int_0^\infty xf(t,x) = \text{const.} \, , & t\geq 0\,, 
 \end{array}
\end{equation}
where $a(x)$ and $b(x)$ are two functions of the size, respectively for the aggregation and fragmentation rates. Note that in such a model, $u(t)$ plays the \textit{analog} role of the concentration of free particles $C_1(t)$ in the SBD model. Under classical conditions on $a$ and $b$, it is known that system \eqref{eq:LS} is well-defined when the flux is pointing outwards of the domain, namely if $a(0)u(t) - b(0) < 0$, otherwise it lacks a proper \textit{boundary condition} at $x=0$. For theoretical studies on the well-posedness and long-time behavior of the LS model, we refer the interested reader to \cite{Laurenccot2002,Collet2002a,Niethammer2008}.

\medskip

The two (discrete-size) BD and (continuous-size) LS models have been rigorously connected within the context of deterministic models. Two main approaches are used. One can consider the large time behavior of the BD model, and relates the dynamics of large clusters to solutions of various version of LS equations. It is the so-called theory of Ostwald ripening, see \cite{Penrose1997,Niethammer2005,Velazquez1998}. The other approach identifies the ``macroscopic limit'' of the BD system, and considers an initial condition with a large excess of particles. Then, an appropriate re-scaling of this initial condition together with the time and the rate functions leads to solutions of the LS equation, see \cite{Collet2002,Laurencot2002a,doumic}. In the two last aforementioned works, the authors introduce a suitable scaling parameter and are able to prove that the solution of the (deterministic) BD model converges (in an appropriate sense) towards a solution of the LS equations \eqref{eq:LS}, as the scaling parameter converges towards $0$. These results were restricted to weak convergence in a \textit{vague} topology, that is against test functions that vanish at the boundary $x=0$ of the physical state space (see also Theorem \ref{thm:LS_convvague} below). As such, the convergence results was restricted to cases where the problem does not require any boundary value in order to uniquely define a solution of the limit system \eqref{eq:LS}, \textit{i.e.} to cases where $a(0)u(t)-b(0)<0$.

\medskip

\noindent The aim of this paper is to extend the previous (deterministic) results obtained in \cite{Collet2002,Laurencot2002a} in two directions.

\smallskip

{\bf 1.} Instead of starting with the deterministic version of the BD model, we use the stochastic (SBD) version to connect both models, inspired from \cite{Fournier,Fourniera} on the Marcus-Lushnikov model and \cite{champagnat08} on structured population models. In the stochastic context developed here, we can expect that second-order approximation and large deviation results to be of a qualitative different nature than the corresponding deterministic context. Hence our approach has its interest in its own, see Discussion. Moreover, it seems it is the first time a rigorous link from discrete-size stochastic to continuous-size deterministic coagulation fragmentation model is proposed.

\smallskip

{\bf 2.}  The most important part. We extend the previous results because we are able to rigorously identify, for general scaling, a boundary-value for the LS equation in the case of incoming flux when $u(t) > \lim_{x\to0} b(x)/a(x) \in [0,+\infty]$. This is the main novelty of our results. In particular, we obtain a convergence result (see Theorem \ref{thm:LS_convweak}) towards a (weak measure) solution of the LS equation \eqref{eq:LS} with a flux condition corresponding to
\begin{equation}\label{eq:boundary_flux}
 \lim_{x\to 0} \ (a(x)u(t)-b(x))f(t,x) = \alpha u(t)^2\,,
\end{equation}
where $\alpha$ is explicitly derived from the microscopic rates of the initial model together with proper rescaling. Such boundary conditions were conjectured {\it e.g.} in  \cite{Collet2002,Prigent2012}  but never proved. Historically, there was no need of a boundary-value in  LS since the problem was well-posed under physical assumptions (when small clusters tend to fragment). But, recent applications in Biology have raised this problem to include nucleation in this equation, for instance in \cite{Prigent2012,Helal2013}. Furthermore, such results raise the possibility to obtain quantitative approximation of time-dependent solutions (or related quantities, such as first passage time) of the SBD with the help of the limit macroscopic model (\ref{eq:LS}-\ref{eq:boundary_flux}), see Discussion. 

\smallskip

Finally, we would mention the originality of our work resides in the proof too. Indeed, to identify the boundary we carefully introduce particular measure spaces and their topologies. Then, we adapt to our context the tools developed in \cite{Kurtz1992} about averaging to obtain the limit of some fast (fluctuating) variables. This averaging procedure yields the identification of the boundary condition. To the best of our knowledge, such a strategy to rigorously derive a boundary condition for a transport equation seems to be new.

\medskip 

\paragraph{\textbf{Organization of the paper}}We start by introducing the measure-valued SBD model in Section \ref{sec:main} together with its rescaled generator. This section presents in a concise manner our main results in Theorems \ref{thm:SBD_rescale_limit}, \ref{thm:LS_convvague} and \ref{thm:LS_convweak}. Stochastic equations, martingale properties and the scaling laws used to rescale the SBD model are detailed in Section \ref{sec:martingale}. We go on with technical results on moment estimates and tightness properties in Section \ref{sec:technical} in order to prepare the proof of the main results. We emphasize in this section the introduction of a particular (occupation) measure containing the information on the boundary value. In Section \ref{sec:proof} we prove our main three theorems. We then conclude by a Discussion in Section \ref{sec:disc} with literature comparison, possible applications and future directions of our work, illustrated with the help of numerical simulations.

\paragraph{\textbf{Notations}} For the remainder we introduce few classical notations we will use for sake of clarity. First, $\mathcal C$ denotes the space of continuous functions. Similarly, $\Cc_b$, $\Cc_c$ and $\Cc_0$ are the spaces of continuous functions which are, respectively, bounded, compactly supported and vanishing at boundary (seen as a closure of $\Cc_c$). We denote by $\mathcal C^{k}$ the functions having $k$ continuous derivatives (up to $k=\infty$). Similarly for the other spaces the $k$ derivatives have the same regularity. 
For a Polish space $E$, we denote by $\Mc(E)$ the set of non-negative Radon measures on $E$, $\Mc_b(E)$ the set of non-negative and finite Radon measures on $E$ and $\Pc(E)$ the set of probability measures.
The convergence in $\Pc(E)$ of a sequence $X^n$ of $E$-valued random variable has to be understood as the classical convergence in law or distribution of random variables. For any $\nu \in \Mc_b(E)$ and $\vphi$ a real-valued measurable function on $E$, we write  

\[\lb \nu,\vphi\rb_{E} = \int_E \vphi(x)\nu(dx)\,.\]
When no doubt remains on the measurable space $E$, we will simply write $\lb \nu,\vphi\rb$ instead of $\lb \nu,\vphi\rb_{E}$.

\section{Empirical measure, re-scaled process and main results}\label{sec:main}

The SBD model introduced in the previous section could be studied using classical tools from Markov chains, such as stochastic equations, Chapman-Kolmogorov equations, first-passage time analysis, \emph{etc}. As our objective is to investigate the limit as the total mass $M\to \infty$ (large numbers) and to recover a weak form of a deterministic partial differential equation, it is preferable to use a measure-valued stochastic process approach. The advantage is to get a fixed state space while performing the limit $M\to\infty$. To that, we consider the set 

\[\Mc_\delta  :=  \left\{ \sum_{i=1}^n \delta_{x_i} \, : \,  n\geq 0,  \, (x_1,\ldots,x_n) \in \Nb^n, \ x_i \geq 2, \ \forall i  \right\}\subset \Mc_b(\Rb_+)\,.\]
We represent the population of clusters, with the following measure at time $t\geq0$

\begin{equation}\label{eq:empirical_measure}
\mu_t = \sum_{i\geq 2} C_i(t)\,\delta_{i} \in \Mc_\delta\,.
\end{equation}
Note that the number of clusters for a given size $i\geq 2$ is $C_i(t)=\mu_t(\{i\})$. This point of view defines $(\mu_t)_{t\geq 0}$ as a measure-valued stochastic process that entirely encodes the information of the system. Moreover, since the free particle quantity $C_1$ is not included in the measure $\mu_t$, we recover it through the balance of mass, which reads now (with $\Id$ the identity function)

\[ C_1(t) +  \lb \mu_t ,\mathrm{Id} \rb = \sum_{i\geq 1} i C_i(t) =  M\,, \quad t\geq 0\,. \]

As we will see in Section \ref{sec:martingale}, the infinitesimal generator $\Lc$ of the measure-valued stochastic process $(\mu_t)_{t\geq0}$ is given, for all $\nu \in \Mc_\delta$ and for all locally bounded measurable function $\psi$ from $\Mc_b$ to $\Rb$, by
\begin{multline}\label{eq:gene_BD_stoch}
 \mathcal L \psi(\nu) =    \left[ \psi(\nu+\delta_{2}) - \psi(\nu)\right] a_1 C(C-1) \vphantom{\sum_{i\geq k}} + \left[ \psi(\nu-\delta_{2}) - \psi(\nu)\right] b_2 \nu(\{2\})\vphantom{\sum_{i\geq k}} \\
  +  \sum_{i\geq 2}  \left[ \psi(\nu+\delta_{i+1}-\delta_{i}) - \psi(\nu)\right]a_i C \nu(\{i\})  +  \sum_{i\geq 3}  \left[ \psi(\nu-\delta_i+\delta_{i-1}) - \psi(\nu)\right]b_i \nu(\{i\}) \, ,
\end{multline}
where $C = M - \lb \nu,\mathrm{Id} \rb$.  Comparing to the chemical reactions in \eqref{eq:chemreact_intro}, the first two terms in Equation \eqref{eq:gene_BD_stoch} correspond to the reactions for $i=1$ in \eqref{eq:chemreact_intro}, while the two sums in the generator correspond to the reactions for $i\geq 2$ in \eqref{eq:chemreact_intro}. Further properties are detailed in Section \ref{sec:martingale}.

Our main object will be a rescaled version of this generator. We introduce a small parameter $\veps>0$ and we study the limit as $\veps \to 0$ of a rescaled measure-valued SBD process $(\mu^\veps_t)_{t\geq 0}$. This process depends on aggregation and fragmentation rates, $a^\veps$ and $b^\veps$ that are now defined as a function on $\Rb_+$. It also depends on two parameters $\alpha^\veps$ for the nucleation rate (formation of a cluster of size $2$) and $\beta^\veps$ for the de-nucleation (fragmentation of a cluster of size $2$). We give the following definition.
\begin{definition} \label{def:rescaleSBD}
 Consider an initial state $\mu^\veps_{\rm in}$ that belongs to 
\[ \Mc_\delta^\veps:= \Big\{ \nu \in \Xc\, : \, \nu = \sum_{i=1}^n \veps \delta_{\veps x_i} \,, \ (x_1,\ldots,x_n)\in \Nb, \ x_i\geq 2, \ \forall i \Big\}\,,\]
and $u_{\rm in}^\veps \in \veps\Nb$. We define $ m^\veps := u^\veps_{\rm in} + \lb \mu^\veps_{\rm in},\Id \rb$. The {\it rescaled measure-valued Stochastic Becker-D\"oring process} $(\mu^\veps_t)_{t\geq 0} \in \Dc(\Rb_+,w-\Xc) $ is a Markov $\Xc$-valued c\`adl\`ag process, where 
\[ \Xc := \Big\{ \nu \in\Mc_b(\Rb_+)\,: \, \lb \nu,\Id \rb < +\infty \Big\} \,,\]
taking its values in  $\Mc_\delta^\veps$ and having its infinitesimal generator given, for all $\nu \in \Mc_\delta^\veps$ and for all locally bounded measurable function $\psi$ from $\Xc$ to $\Rb$, by
\begin{multline}\label{eq:gene_BD_stoch_rescale}
 \mathcal L^\veps \psi(\nu)  =   \ds \frac{\psi(\nu+ \veps \delta_{2\veps}) - \psi(\nu)}{\veps} \alpha^\veps c(c-\veps^2) \vphantom{\sum_{i\geq k}}\
 + \frac{ \psi(\nu-\veps \delta_{2\veps}) - \psi(\nu)}{\veps} \beta^\veps \nu (\{2\veps\}) \vphantom{\sum_{i\geq k}} \\
  +  \int_{2\veps}^{+\infty}   \frac{\psi(\nu+ \veps \delta_{x+\veps}-\veps \delta_{x}) - \psi(\nu)}{\veps^{2}}  a^\veps(x) c \nu(dx)  \vphantom{\sum_{i\geq k}} +  \int_{3\veps}^{+\infty} \frac{\psi(\nu-\veps \delta_{x}+\veps \delta_{x-\veps}) - \psi(\nu)}{\veps^{2}}  b^\veps(x) \nu(dx) \vphantom{\sum_{i\geq k}}\, ,
\end{multline}
where $c=m^\veps - \lb \nu,\Id \rb$.
\end{definition}
\begin{remark}
 Here, the Polish space $\Xc$ is equipped with the weak topology as described in \ref{app:space_X} further denoted by $w-\Xc$ (or alternatively $(\Xc,w)$). The space $\Dc(\Rb_+,w-\Xc)$ denotes the $\Xc$-valued c\`adl\`ag function and is equipped with the Skorohod topology (see \cite{EthierKurtz} for more details).
\end{remark}

For each $\veps>0$, this process in Definition \ref{def:rescaleSBD} is well-defined and unique since, conditionally to $\mu^\veps_{\rm in}$ and $u^\veps_{\rm in}$, it is a continuous-time Markov chain with finite state space. In Section \ref{sec:martingale} we introduce the scaling used to rigorously derive the generator $\Lc^\veps$ from the original one $\Lc$, but it should be clear by virtue of the state space that we made a scaling of number and size all together. Our first limit theorem below depends on some assumptions we detail here.

\begin{hyp}[Convergence of rate functions and parameters] \label{hyp:cv_fct}
Assume that there exist two non-negative constants $\alpha$ and $\beta$, and two continuous non-negative functions $a$ and $b$ such that
\begin{align}
 &\{\alpha^\veps\} \text{ converges towards } \alpha \,. \tag{H1} \label{H1}\\
 &\{\beta^\veps\} \text{ converges towards } \beta \,. \label{H2} \tag{H2} \\
 &\{a^\veps\} \text{ converges uniformly on any compact set of } \Rb_+ \text{ towards } a \text{ and } \nonumber \\
 & \hspace{3em}\exists K_a>0 \text{ s.t. } a^\veps(x) \leq K_a (1+x), \ \forall x\in \Rb_+ \text{ and } \forall \veps>0\,. \label{H3} \tag{H3} \\
 &\{b^\veps\} \text{ converges uniformly on any compact set of } \Rb_+ \text{ towards } b \text{ and } \nonumber \\
 & \hspace{3em}\exists K_b>0 \text{ s.t. } b^\veps(x) \leq K_b (1+x), \ \forall x\in \Rb_+ \text{ and } \forall \veps>0\,. \label{H4} \tag{H4} 
\end{align}

\end{hyp}

\begin{remark}
Hypotheses \eqref{H3} to \eqref{H4} entail that for all $x\geq 0$, $a(x) \leq K_a (1+x)$, and $b(x) \leq K_b (1+x)$.
\end{remark}
To determine the boundary value, we need to define the occupation measure (see \cite{Kurtz1992}) of  the evaluation at a given finite size of $\mu^\veps$. For that we define the sequence $p^\veps_t=(p^\veps_{n,t})_{n\geq0}$ by  $p_{n,t}^\veps=\mu_t^\veps(\{\veps(n+2)\})$ for all $n\geq0$. The sequence $p_t^\veps$ clearly belongs to $\ell^+_1$, the non-negative cone of the summable sequences. The occupation measure is then defined by 
\begin{equation}\label{def:Gamma}
 \Gamma^\veps(A\times B) = \int_A \indic{\{p_s^\veps\in B\}} \, ds \,,
\end{equation}
for all Borel set $A$ of $\Rb_+$ and Borel set $B$ of $\ell_1^+$ equipped with the vague topology (the convergence against sequences that vanish at infinity). This measure belongs to the subspace $\Yc$ of non-negative measures on $\Rb_+\times \ell_1^+$ given by
\begin{equation*}
\Yc:= \Bigg{\{} \Theta \in  \Mc(\Rb_+\times \ell_1^+)\, : \, \forall t\geq 0,\   \Theta([0,t]\times \ell_1^+) = t\,,
\  \int_{[0,t]\times \ell_1^+} (1+\lVert q \rVert_{\ell_1}) \, \Theta(ds\times dq) < +\infty \Bigg{\}} \,. 
\end{equation*}

We are now in position to state our first result.

\begin{theorem} \label{thm:SBD_rescale_limit}
Suppose that Assumption \ref{hyp:cv_fct} holds, $\{\mu_{\rm in}^\veps\}$ converges in law towards a deterministic measure $\mu_{\rm in}$ by staying \as\ in a weakly compact set $\Kc$ of $\Xc$, that is, we have $\Pb(\mu^\veps_{\rm in} \in \Kc) = 1$ for all $\veps>0$,  and $u^\veps_{\rm in}$ converges towards a deterministic $u_{\rm in}$ in $\Pc(\Rb_+)$.

Then, the SBD process $\{\mu^\veps\}$ and the occupation measure $\{\Gamma^\veps\}$ converge along an appropriate subsequence as $\veps \to 0$, respectively, to $\mu$ in $\Pc(\Dc(\Rb_+,w-\Xc))$ and $\Gamma$ in $\Pc(w^\#-\Yc)$. The limit $\mu$ belongs to $\Cc(\Rb_+,w-\Xc)$ and we have, for all $\vphi \in \Cc^1_b(\Rb_+)$ and $t\geq 0$, \as
 \begin{multline}\label{eq:weak_LS_vague1}
  \brak{\mu_t}{\vphi}=\brak{\mu_{\rm in}}{\vphi}  + \int_0^t \int_{0}^{\infty} \vphi'(x)(a(x)u(s)-b(x))\mu_s(dx)\,ds+ \vphi(0)  \int_0^t  \alpha u(s)^2 \,ds \\
  -   \vphi(0) \int_{[0,t]\times\ell_1^+} \beta q_0 \Gamma(ds\times dq) + \vphi'(0) \int_{[0,t]\times\ell_1^+} b(0) q_{0} \Gamma(ds\times dq), 
 \end{multline}
 where $u(t) + \brak{\mu_t}{\mathrm{Id}} = m$ given by $m:=u_{\rm in} + \brak{\mu_{\rm in}}{\mathrm{Id}}$ and $q_0$ is the first component of the variable $q\in \ell_1^+$.
\end{theorem}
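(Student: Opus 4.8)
\emph{Proof strategy.} The plan is to derive \eqref{eq:weak_LS_vague1} by passing to the limit in the Dynkin decomposition of $\langle\mu^\veps_t,\vphi\rangle$, the new point being that the endpoint of the fragmentation term and the de\-nucleation term do not close on $\mu_s$ and must be recovered through the occupation measure $\Gamma^\veps$. Fix $\vphi\in\Cc^1_b(\Rb_+)$ and put $\psi(\nu)=\langle\nu,\vphi\rangle$. Since, conditionally on the initial data, $\mu^\veps$ is a finite-state Markov chain, Dynkin's formula gives
\begin{equation*}
\langle\mu^\veps_t,\vphi\rangle=\langle\mu^\veps_{\rm in},\vphi\rangle+\int_0^t\Lc^\veps\psi(\mu^\veps_s)\,ds+M^\veps_t,
\end{equation*}
with $M^\veps$ a square-integrable martingale. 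Substituting $\psi$ into \eqref{eq:gene_BD_stoch_rescale}, every increment collapses to a first-order difference quotient and, with $c=m^\veps-\langle\nu,\Id\rangle$,
\begin{multline*}
\Lc^\veps\psi(\nu)=\vphi(2\veps)\bigl(\alpha^\veps c(c-\veps^2)-\beta^\veps\nu(\{2\veps\})\bigr)+\int_{2\veps}^{\infty}\frac{\vphi(x+\veps)-\vphi(x)}{\veps}\,a^\veps(x)\,c\,\nu(dx)\\
+\int_{3\veps}^{\infty}\frac{\vphi(x-\veps)-\vphi(x)}{\veps}\,b^\veps(x)\,\nu(dx).
\end{multline*}
Using the tightness and a priori moment estimates of Section~\ref{sec:technical} together with Prokhorov's theorem, I extract a subsequence along which $(\mu^\veps,\Gamma^\veps)\to(\mu,\Gamma)$ in law in $\Dc(\Rb_+,w-\Xc)\times(w^\#-\Yc)$, invoke Skorohod's representation to make the convergence almost sure, and pass to the limit term by term in the identity above.

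\emph{Vanishing of the martingale and limit of the regular part.} A single jump changes $\langle\mu^\veps,\vphi\rangle$ by $O(\veps)$ for (de)nucleation and $O(\veps^2)$ for aggregation/fragmentation, while the corresponding rates are $O(\veps^{-1})$ and $O(\veps^{-2})$; hence the predictable quadratic variation of $M^\veps$ is $O(\veps)$ (after using the first-moment control on $\langle\mu^\veps_s,\Id\rangle$ to absorb the linear growth of $a^\veps,b^\veps$), so $M^\veps\to0$. In the drift, $\tfrac{\vphi(x\pm\veps)-\vphi(x)}{\veps}\to\pm\vphi'(x)$ with the uniform bound $\|\vphi'\|_\infty$, $a^\veps\to a$ and $b^\veps\to b$ locally uniformly under \eqref{H3}--\eqref{H4}, $m^\veps\to m$, and $c=m^\veps-\langle\mu^\veps_s,\Id\rangle\to u(s)=m-\langle\mu_s,\Id\rangle$ by continuity of $\nu\mapsto\langle\nu,\Id\rangle$ on $w-\Xc$, the higher-moment bound upgrading weak convergence to convergence against functions of linear growth. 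Consequently the two integral terms tend to $\int_0^t\!\int_0^\infty\vphi'(x)a(x)u(s)\,\mu_s(dx)\,ds$ and $-\int_0^t\!\int_0^\infty\vphi'(x)b(x)\,\mu_s(dx)\,ds$, \emph{except} for the contribution of the lower endpoint of the fragmentation integral, which is the source of the boundary terms.

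\emph{Extraction of the boundary terms and averaging.} Here is the key algebraic observation: any $\nu\in\Mc_\delta^\veps$ is supported on $\veps\Nb\cap[2\veps,\infty)$, so
\begin{equation*}
\int_{3\veps}^{\infty}g\,d\nu=\int_{2\veps}^{\infty}g\,d\nu-g(2\veps)\,\nu(\{2\veps\}),\qquad g(x)=\frac{\vphi(x-\veps)-\vphi(x)}{\veps}\,b^\veps(x),
\end{equation*}
where $g(2\veps)=\tfrac{\vphi(\veps)-\vphi(2\veps)}{\veps}b^\veps(2\veps)\to-\vphi'(0)b(0)$, while $\vphi(2\veps)\to\vphi(0)$ and $\alpha^\veps c(c-\veps^2)\to\alpha u(s)^2$. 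Thus the drift decomposes into the two ``bulk'' integrals plus the boundary expressions $\vphi(0)\alpha u(s)^2$, $-\vphi(0)\beta\,p^\veps_{0,s}$ and $\vphi'(0)b(0)\,p^\veps_{0,s}$, with $p^\veps_{0,s}=\mu^\veps_s(\{2\veps\})$. These last two are fast fluctuating variables: they do not converge pathwise and are not read off from $\mu_s$, whose behaviour at $x=0$ is not resolved by $w-\Xc$; this is exactly why $\Gamma^\veps$ is needed. Writing $\int_0^t h(s,p^\veps_s)\,ds=\int_{[0,t]\times\ell_1^+}h(s,q)\,\Gamma^\veps(ds\times dq)$ and noting that the coordinate map $q\mapsto q_0$ is continuous for the vague topology on $\ell_1^+$ (test against $(1,0,0,\dots)$, which vanishes at infinity), the $w^\#$-convergence $\Gamma^\veps\to\Gamma$ carries $\int_0^t\beta\,p^\veps_{0,s}\,ds$ and $\int_0^t b^\veps(2\veps)\tfrac{\vphi(2\veps)-\vphi(\veps)}{\veps}p^\veps_{0,s}\,ds$ to their limits, once a uniform-integrability bound on $\int_0^t p^\veps_{0,s}\,ds$ permits truncation of the unbounded integrand $q_0$. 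Assembling all limits yields \eqref{eq:weak_LS_vague1}; continuity of $t\mapsto\mu_t$ in $w-\Xc$ follows because all jumps are $O(\veps)$.

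\emph{Main obstacle.} The genuinely delicate points are (i) tightness of $\{\Gamma^\veps\}$ in $w^\#-\Yc$ and the verification that the limit lies in $\Yc$ (the normalization $\Theta([0,t]\times\ell_1^+)=t$ passes to the limit, but the $\ell_1$-moment bound requires care), and (ii) the uniform integrability needed to pass to the limit in the two unbounded functionals $\langle\mu^\veps_s,\Id\rangle$ and $p^\veps_{0,s}$; both rest on the a priori moment estimates of Section~\ref{sec:technical}, the latter exploiting the self-limiting structure of the dynamics near size $2\veps$ (a large value of $p^\veps_{0,s}$ increases both its aggregation rate and its de\-nucleation rate). The remaining ingredients — the telescoping of $\Lc^\veps$, the vanishing of $M^\veps$, and the Skorohod coupling — are routine.
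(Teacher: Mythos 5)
Your proposal is correct and follows essentially the same route as the paper: the semimartingale decomposition of $\brak{\mu^\veps_t}{\vphi}$ (Proposition \ref{cor:rescaled_process}), tightness of $(\mu^\veps,\Gamma^\veps)$ via the moment estimates, vanishing of the $O(\veps)$ quadratic variation, and extraction of the boundary contributions from the (de)nucleation rates and the atom at $x=2\veps$ of the fragmentation integral, averaged through the occupation measure. The only (immaterial) difference is the final identification device: you invoke a Skorohod coupling and pass to the limit term by term, whereas the paper shows $\Eb\left[\lvert F^\vphi_t(\mu^\veps,\Gamma^\veps)\rvert\right]\to 0$ for the continuous functional $F^\vphi_t$ and concludes via \cite[Theorem 3.4]{Billingsley99}.
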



\begin{remark}
Here, the Polish space $w^\#-\Yc$  (or alternatively $(\Yc,w^\#)$ in the remainder) denotes the space $\Yc$ equipped with the weak$^\#$ topology as described in \ref{app:space_Y}. 
\end{remark}

\begin{remark}
The existence of a weakly compact set $\Kc$ of $\Xc$ on the initial measures will be useful for tightness criteria (see Section \ref{sec:technical}).
\end{remark}

In Equation \eqref{eq:weak_LS_vague1}, the second term on the right hand side is the classical drift (in weak form) of the Lifshitz-Slyozov equation. Moreover, the terms involving the limit occupation measure $\Gamma$ contribute to the boundary value. A simple computation, taking $\vphi= \indic{}$, shows that the terms in $\vphi(0)$ account for the number of clusters created with critical size $0$ (nucleation). While taking $ \vphi=\Id$, the term in $\vphi'(0)$ gives a mass to the clusters of size $0$.


As a direct consequence of this theorem, taking $\vphi\in C^1_c(\Rb_+^*)$, we recover a measure solution of the classical Lifshitz-Slyozov equation in a sense called {\it vague}.

\begin{theorem}\label{thm:LS_convvague}
Under the same hypotheses as Theorem \ref{thm:SBD_rescale_limit}, the limit $\mu$ is a vague solution of the Lifshitz-Slyozov equation, that is \as~for all $\vphi \in \Cc^1_c(\Rb_+^*)$ and $t\geq0$

\begin{equation} \label{eq:vague_LS}
  \brak{\mu_t}{\vphi}=\brak{\mu_{\rm in}}{\vphi}  + \int_0^t \int_{0}^{\infty} \vphi'(x)(a(x)u(s)-b(x))\mu_s(dx)ds\,,
\end{equation}
where $u(t) + \brak{\mu_t}{\mathrm{Id}} = m$.
\end{theorem}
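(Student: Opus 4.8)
The plan is to derive \eqref{eq:vague_LS} directly from \eqref{eq:weak_LS_vague1} by restricting to the smaller class of test functions $\vphi \in \Cc^1_c(\Rb_+^*)$, i.e.\ $\Cc^1$ functions with compact support contained in the open half-line $(0,+\infty)$. The key observation is that any such $\vphi$ vanishes identically in a neighborhood of the boundary $x=0$; in particular $\vphi(0)=0$ and $\vphi'(0)=0$. Consequently, the last three terms on the right-hand side of \eqref{eq:weak_LS_vague1} --- namely $\vphi(0)\int_0^t \alpha u(s)^2\,ds$, the term $-\vphi(0)\int_{[0,t]\times\ell_1^+}\beta q_0\,\Gamma(ds\times dq)$, and the term $\vphi'(0)\int_{[0,t]\times\ell_1^+} b(0) q_0\,\Gamma(ds\times dq)$ --- all vanish. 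What survives is exactly the drift term $\int_0^t\int_0^\infty \vphi'(x)(a(x)u(s)-b(x))\mu_s(dx)\,ds$, and we obtain \eqref{eq:vague_LS}.

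The only point that requires a (routine) justification is that the finiteness of the occupation-measure integrals appearing in \eqref{eq:weak_LS_vague1} is not an issue here: since $\Gamma \in \Yc$ satisfies $\int_{[0,t]\times\ell_1^+}(1+\|q\|_{\ell_1})\,\Gamma(ds\times dq) < +\infty$ and $|q_0| \le \|q\|_{\ell_1}$, each of those integrals is finite, so multiplying by the scalars $\vphi(0)=0$ and $\vphi'(0)=0$ genuinely gives zero (no $0\cdot\infty$ ambiguity). Likewise the drift integral is controlled using \eqref{H3}--\eqref{H4} (linear growth of $a$ and $b$), the conservation relation $u(s)+\brak{\mu_s}{\Id}=m$ which bounds $u(s)$ and the first moment of $\mu_s$, and the compact support of $\vphi'$. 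Thus the identity \eqref{eq:vague_LS} holds for every $\vphi\in\Cc^1_c(\Rb_+^*)$ and every $t\geq0$, almost surely, with $u$ recovered from the same balance-of-mass relation. This is precisely the notion of \emph{vague} solution of \eqref{eq:LS}, since the class $\Cc^1_c(\Rb_+^*)$ is exactly the set of test functions not seeing the boundary.

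There is essentially no obstacle: the statement is a corollary obtained by specializing the test-function class in the already-established Theorem \ref{thm:SBD_rescale_limit}. The only thing worth a sentence is the inclusion $\Cc^1_c(\Rb_+^*)\subset\Cc^1_b(\Rb_+)$ (extending $\vphi$ by $0$ at and near the origin is legitimate precisely because the support is compact inside the \emph{open} half-line, so the extension is still $\Cc^1$ and bounded), which makes \eqref{eq:weak_LS_vague1} applicable. Hence no new estimate, tightness argument, or averaging is needed, and the contribution of the limit occupation measure $\Gamma$ simply drops out of the picture at the level of vague convergence --- which is, of course, the whole reason the boundary value was invisible in the earlier deterministic works using this topology.
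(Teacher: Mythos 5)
Your reduction is the right one and is essentially the paper's: for $\vphi\in\Cc^1_c(\Rb_+^*)$ the support is a compact subset of the open half-line, so $\vphi$ vanishes on a neighborhood of the origin, $\vphi(0)=\vphi'(0)=0$, the three boundary terms of \eqref{eq:weak_LS_vague1} drop out (and, as you correctly note, there is no $0\cdot\infty$ ambiguity because $\Gamma\in\Yc$ makes the $q_0$-integrals finite), and only the drift survives. Your remark that such a $\vphi$, extended by $0$, lies in $\Cc^1_b(\Rb_+)$, so that Theorem \ref{thm:SBD_rescale_limit} is applicable, is also the correct justification for invoking \eqref{eq:weak_LS_vague1}.

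What is missing is the one step the paper's proof actually spends its words on: the exchange of quantifiers between ``\as'' and ``for all $\vphi$ and $t$''. Theorem \ref{thm:SBD_rescale_limit} gives, for each \emph{fixed} $\vphi\in\Cc^1_b(\Rb_+)$ and each fixed $t\geq 0$, an identity holding outside a null set that may depend on $(\vphi,t)$, whereas Theorem \ref{thm:LS_convvague} asserts a single null set off which \eqref{eq:vague_LS} holds simultaneously for \emph{all} $\vphi\in\Cc^1_c(\Rb_+^*)$ and all $t\geq0$. Your argument, as written, only delivers the weaker ``for each $(\vphi,t)$, a.s.'' conclusion. The fix is standard but must be said: apply the identity for $t\in\Qb^+$ and $\vphi$ ranging over a countable dense subset of $\Cc^1_c(\Rb_+^*)$, take the countable union of the exceptional null sets, and then extend to all $t$ by the a.s.\ continuity of $t\mapsto\brak{\mu_t}{\vphi}$ (recall $\mu\in\Cc(\Rb_+,w-\Xc)$ by Theorem \ref{thm:SBD_rescale_limit}) and to all $\vphi$ by density, using the moment bounds on $\mu_s$ together with the linear growth of $a$ and $b$ to pass to the limit in the drift integral. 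With that addition your proof coincides with the paper's.
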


\noindent This equation is known to be well-posed (uniqueness) in the case of ``outgoing characteristics''. Indeed, this theorem is limited by the fact that the test functions do not account for the boundary value in $0$. Thanks to the result given by Collet and Goudon in \cite[Theorem 3]{Collet2000} it readily follows:

\begin{corollary}
In addition to hypotheses of Theorem \ref{thm:LS_convvague}, assume that $a$ and $b$ belong to $C^1(\Rb_+)$. For any $T>0$ such that the limit $u$ satisfies  \[a(0)\sup_{t\in[0,T]}u(t)-b(0) \leq 0\,,\]  Equation \eqref{eq:vague_LS} has a unique solution $\mu$ in $\Cc([0,T],w-\Xc)$, hence the whole sequence $\{\mu^\veps\}$ converges in  $\Pc(\Dc([0,T],w-\Xc))$ to $\mu$. 
\end{corollary}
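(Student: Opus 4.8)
The plan is to deduce the corollary from the subsequential convergence already established in Theorems~\ref{thm:SBD_rescale_limit}--\ref{thm:LS_convvague}, combined with a uniqueness statement for the vague formulation \eqref{eq:vague_LS}, and then to upgrade ``convergence along a subsequence'' to ``convergence of the whole family $\{\mu^\veps\}$'' by the classical principle that a tight family possessing a single cluster point is convergent.

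First, recall that the tightness of $\{\mu^\veps\}$ in $\Pc(\Dc([0,T],w-\Xc))$, jointly with that of $\{\Gamma^\veps\}$ in $\Pc(w^\#-\Yc)$, established in Section~\ref{sec:technical}, is exactly what underlies the extraction in Theorem~\ref{thm:SBD_rescale_limit}; in particular it applies to the entire family, not merely to one subsequence. Since $\Dc([0,T],w-\Xc)$ is Polish, Prokhorov's theorem shows that from any sequence $\veps_k\to 0$ one may extract a sub-subsequence along which $(\mu^{\veps_k},\Gamma^{\veps_k})$ converges in law to some $(\mu,\Gamma)$. By Theorem~\ref{thm:LS_convvague}, any such $\mu$ lies in $\Cc([0,T],w-\Xc)$ and is a vague solution of the Lifshitz--Slyozov equation \eqref{eq:vague_LS} with the \emph{deterministic} initial datum $\mu_{\rm in}$ and the \emph{deterministic} total mass $m=u_{\rm in}+\brak{\mu_{\rm in}}{\Id}$; in particular $u(t)=m-\brak{\mu_t}{\Id}$ is a well-defined continuous function on $[0,T]$, the $w-\Xc$ topology (see \ref{app:space_X}) retaining the mass functional $\brak{\cdot}{\Id}$.

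Next, the extra assumptions $a,b\in C^1(\Rb_+)$ and $a(0)\sup_{t\in[0,T]}u(t)-b(0)\le 0$ place us in the outgoing-characteristics regime, where the first-order equation \eqref{eq:vague_LS} is well posed without any boundary term; Theorem~3 of \cite{Collet2000} then gives uniqueness of the vague solution in $\Cc([0,T],w-\Xc)$ with the prescribed initial datum and mass. Hence every subsequential limit of $\{\mu^\veps\}$ coincides with this single deterministic $\mu$. A tight family of random elements of a Polish space whose weak cluster points all agree converges weakly to that common value, so $\{\mu^\veps\}$ converges in $\Pc(\Dc([0,T],w-\Xc))$ to $\mu$ as $\veps\to 0$; being deterministic, the limit is also attained in probability.

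The one point that requires care is that the hypothesis is phrased in terms of ``the limit $u$'', whereas a priori distinct subsequences could yield distinct limits. This is not a genuine difficulty: it suffices to verify the outgoing condition for a single cluster point, since the uniqueness of \cite{Collet2000} is then unconditional and forces every other cluster point to coincide with it; equivalently, one checks that both the assignment $\mu\mapsto u$ and the sign condition on $a(0)u-b(0)$ are stable under the $w-\Xc$ topology, which is precisely why that topology was chosen so as to keep track of $\brak{\cdot}{\Id}$. I expect this reconciliation of the uniqueness hypothesis, rather than any new analytic estimate, to be the only thing needing attention; the rest is the routine ``tightness $+$ unique limit point'' scheme applied on top of the cited well-posedness result.
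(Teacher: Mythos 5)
Your argument is correct and takes essentially the same route as the paper, which gives no separate proof beyond invoking the uniqueness result of Collet and Goudon \cite[Theorem 3]{Collet2000} on top of the tightness and subsequential identification already established; this is exactly the ``tightness plus unique cluster point'' scheme you spell out. The extra care you devote to the self-referential phrasing of the hypothesis on ``the limit $u$'' is a sensible reading and does not alter the substance.
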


This corollary does not include cases where $a$ and $b$ behave as a power law ($x\to x^\eta$) with $\eta$ in $(0,1)$, as it is usual. Note that a better result of uniqueness is available in \cite{Laurencot2001} for density solution. Of course, we are interested in the case of ``incoming characteristics'' when a boundary condition is necessary for the well-posedness. Thus, we need to identify what is $\Gamma$. In order to do so, we need to know the behavior of the rate functions $a$ and $b$ near $0$. More precisely, we suppose that the limit functions $a$ and $b$ behave as a power-law function near $0$ and similarly for the approximations $a^{\veps}$ and $b^{\veps}$.

\begin{hyp}[Behavior of the rate functions near $0$] \label{hyp:coef_BD}

We suppose there exist $r_a, \ r_b \geq 0$ with $\min(r_a,r_b)<1$, and $\overline{a}, \ \overline{b}>0$ such that
\begin{equation}\label{cv_a,b} \tag{H5}
 a(x) \sim_{0^+} \overline{a}x^{r_a}\, \text{ and }\ b(x) \sim_{0^+} \overline{b}x^{r_b}\,,
\end{equation}
and that,
\begin{equation}\label{H6}\tag{H6}
 a^{\veps}(\veps i) = a(\veps i) + o(\veps^{r_a})\,, i\geq 2\,, \text{ and } b^{\veps}(\veps i) = b(\veps i) + o(\veps^{r_b})\,, i\geq 3\,.
 \end{equation}


\end{hyp}
%

\noindent Before stating the second theorem we introduce a critical threshold which will be debated below, namely

\begin{equation} \label{def:rho}
  \rho := \lim_{x\to 0} \frac{b(x)}{a(x)} \in [0,+\infty]\,.
\end{equation}
The result reads:

\begin{theorem}\label{thm:LS_convweak}
In addition to hypotheses of Theorem \ref{thm:LS_convvague}, suppose that Assumption \ref{hyp:coef_BD} holds. Then, on any time interval $[t_0,t_1]$ such that the limit $u(t)>\rho$ for all $t\in[t_0,t_1]$, the limit measure $\Gamma$ vanishes and $\mu$ is a weak solution of the Lifshitz-Slyozov equation, that is for all $\vphi\in\Cc_b^1(\Rb_+)$ and $t\in[t_0,t_1]$, \as
 
 \begin{equation}\label{eq:weak_LS}
  \brak{\mu_t}{\vphi}=\brak{\mu_{t_0}}{\vphi}  + \int_{t_0}^t \int_{0}^{\infty} \vphi'(x)(a(x)u(s)-b(x))\mu_s(dx)ds + \vphi(0) \int_{t_0}^t  \alpha u(s)^2 ds\,, 
 \end{equation}
 with $u(t) + \brak{\mu_t}{\mathrm{Id}} = m$.
\end{theorem}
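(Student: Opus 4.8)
The strategy is to show that, on a time interval where $u(t) > \rho$, the occupation measure $\Gamma$ vanishes, so that the two boundary terms involving $\Gamma$ in Theorem \ref{thm:SBD_rescale_limit} disappear, leaving exactly \eqref{eq:weak_LS}. The starting point is the identity \eqref{eq:weak_LS_vague1}, which already holds for all $\vphi\in\Cc^1_b(\Rb_+)$; it remains to control the last two integrals against $\Gamma$. I would first characterize $\Gamma$ by passing to the limit in the martingale problem for the fast variable $p^\veps_t=(\mu_t^\veps(\{\veps(n+2)\}))_{n\geq0}$, following the averaging machinery of \cite{Kurtz1992}. Applying $\Lc^\veps$ to test functions $\psi(\nu)=g(\nu(\{\veps(n+2)\})_n)$ that depend only on finitely many components near the boundary produces, after the $\veps$-rescaling, a generator of order $1/\veps$ for $p^\veps$; since $\{\mu^\veps\}$ is tight in $\Dc(\Rb_+,w-\Xc)$ and $\{\Gamma^\veps\}$ is tight in $\Pc(w^\#-\Yc)$ (from Section \ref{sec:technical}), the occupation-measure limit $\Gamma$ must be, for a.e.\ $s$, an invariant measure $\gamma_s(dq)$ of the limiting fast dynamics — a Becker--Döring-type birth-death chain on $\ell_1^+$ with effective on-rate governed by $\overline{a}\,u(s)$ (from the flux of mass coming from $x=0$) and off-rate governed by $\overline{b}$, with the slow variables $u(s)$, $\mu_s$ frozen.

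The heart of the argument is then an \emph{unboundedness / non-existence of invariant measure} dichotomy for this frozen fast chain. Using \eqref{cv_a,b} and \eqref{H6}, the ratio of the incoming to outgoing rates at the boundary, in the limit $\veps\to0$, is controlled by $a(\veps i)u(s)/b(\veps i)\to u(s)/\rho$ (interpreting appropriately when $\rho=0$ or $\rho=+\infty$), which exceeds $1$ precisely when $u(s)>\rho$. When this supercriticality holds uniformly on $[t_0,t_1]$, the frozen birth-death chain is transient and admits no finite stationary distribution on $\ell_1^+$ — mass escapes to infinity (equivalently, a near-boundary "pile-up" of clusters cannot persist; the flux is genuinely incoming and is transported away). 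The rigorous way to cash this in is a moment/Lyapunov estimate: I would test \eqref{def:Gamma} against a function of $q$ like $\langle q,\mathbf{1}\rangle$ or $q_0$ and use the martingale identity controlling $\int_{2\veps}^{\cdot}$-type boundary fluxes (Section \ref{sec:martingale}) to show $\int_{[t_0,t_1]\times\ell_1^+} q_0\,\Gamma(ds\times dq)=0$ and likewise $\int q_0 \beta\,\Gamma=0$; concretely, one shows $\mu^\veps_t(\{2\veps\})\to0$ in an averaged sense because any accumulation at size $2\veps$ is immediately swept to size $3\veps$ and beyond by the supercritical drift, so the time spent with $p^\veps$ away from $0$ has vanishing $\Gamma$-measure. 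This forces $\Gamma$ (restricted to $[t_0,t_1]\times\ell_1^+$) to be supported on $\{q_0=0\}$, hence $\int q_0\,\Gamma=0$, killing both $\Gamma$-terms in \eqref{eq:weak_LS_vague1}.

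I expect the main obstacle to be making the "frozen slow variables" step and the transience argument fully rigorous simultaneously: $u(s)$ is itself a limit object with only the regularity coming from $u(t)+\langle\mu_t,\Id\rangle=m$ and $\mu\in\Cc(\Rb_+,w-\Xc)$, so one must argue that on the (open, by continuity) set $\{u>\rho\}$ the separation of time scales is genuine and uniform, and that the $o(\veps^{r_a})$, $o(\veps^{r_b})$ corrections in \eqref{H6} do not destroy supercriticality at the relevant scale $x\sim\veps$. A careful treatment will localize in time on compact subintervals where $u(t)\geq\rho+\eta$ for some $\eta>0$, establish a uniform-in-$\veps$ exponential tail bound for $p^\veps_t$ near the boundary via a geometric Lyapunov function $q\mapsto\sum_n\lambda^n q_n$ with $\lambda$ chosen from the rate ratio, and then pass to the limit. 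Once the $\Gamma$-terms are shown to vanish, restricting \eqref{eq:weak_LS_vague1} to $[t_0,t]\subset[t_0,t_1]$ and using the additivity of the integrals gives \eqref{eq:weak_LS} directly, with the mass balance $u(t)+\langle\mu_t,\Id\rangle=m$ inherited from Theorem \ref{thm:SBD_rescale_limit}.
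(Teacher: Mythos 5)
Your overall architecture is the paper's: tightness of $\{\mu^\veps\}$ and $\{\Gamma^\veps\}$, Kurtz-style averaging of the fast boundary variable $p^\veps$, identification of the disintegration $\Gamma=\gamma_t(dq)\,dt$ as a family of invariant measures of the frozen fast dynamics, and the observation that supercriticality $u(t)>\rho$ forces $\gamma_t$ to charge only the zero sequence, which kills the two $q_0$-terms in \eqref{eq:weak_LS_vague1}. Up to that point the proposal matches Theorem \ref{prop:cv_bord} and its proof.

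The gap is in the step where you actually show $\gamma_t=\delta_{\mathbf 0}$ (or even just $\int q_0\,\Gamma=0$). First, after multiplying the generator $\widetilde\Hc^\veps$ by $\veps^{1-r}$ the surviving limit $\widetilde\Hc$ in \eqref{gene-H} is a \emph{first-order drift operator}: the limiting fast dynamics is the deterministic constant-monomer Becker--D\"oring ODE on $\ell_1^+$, not a stochastic birth--death chain, so ``transience of the frozen chain'' is not the right dichotomy; what is needed is that the only probability measure $\pi$ with $\int_{\ell_1^+}[\widetilde\Hc g](\nu,q)\,\pi(dq)=0$ for all $g\in\Gc$ is $\delta_{\mathbf 0}$. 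The paper gets this statically (Lemma \ref{stationary_measure} and Propositions \ref{prop:stationary_BD}--\ref{prop:mean_stationary_BD}): choosing $g^i$ with $Dg^i[q](1_n)=f(q)\indic{n=i}$ and telescoping gives $\int f\,J_n\,d\pi=0$ for every $n$ and every $f$, hence $\pi$ is supported on the common zero set of the fluxes; the detailed-balance candidates $\overline q_n=Q_nc^n\overline q_0$ have radius of convergence exactly $\rho$ by Cauchy--Hadamard, so for $c>\rho$ summability forces $\overline q_0=0$ and the zero set is $\{\mathbf 0\}$. Your alternative --- a prelimit Lyapunov/exponential-tail bound with weight $q\mapsto\sum_n\lambda^nq_n$ --- is not carried out and faces concrete obstacles: that function is not in the admissible class $\Gc$ (its derivative is not compactly supported and it depends on infinitely many coordinates), the weights $a_n\sim\overline a(n+2)^{r}$ are unbounded so the drift computation needs a truncation you do not supply, and the uniform-in-$\veps$, uniform-in-$t$ control you flag as ``the main obstacle'' is genuinely needed in your route but entirely avoided in the paper's, where the stationarity identity holds for a.e.\ fixed $t$ separately and no uniform localization $u\geq\rho+\eta$ is required. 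So the proposal is right in outline but the decisive identification of $\Gamma$ is replaced by a heuristic whose rigorous implementation is both unfinished and harder than the paper's static argument.
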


Let us discuss the different scenarios. If $0 \leq  r_a<r_b$, the aggregation term is stronger than the fragmentation near $x=0$, and small size clusters can growth for all time since $\rho = 0$. If $0 \leq r_a=r_b$, it is a limit case and $\rho = \overline b / \overline a$. The nucleation occurs when enough free particles is supplied. Note that in the case $\rho>m$, we always have $u(t)\leq m<\rho$. While if $0 \leq r_b < r_a$,  the fragmentation is stronger than the aggregation in $0$ and Theorem \ref{thm:LS_convweak} is nothing compared to Theorem \ref{thm:LS_convvague} since $\rho = +\infty$.  The case $\min(r_a,r_b)\geq 1$ is related, either to an outgoing case or to a case where no boundary condition is needed. The latter corresponds to the case where small size clusters cannot growth in finite time and is thus not investigating here.

\noindent The uniqueness of this latter theorem is left since the measure formulation together with the regularity of the coefficients near $0$ make the problem difficult to treat. The reader interested in this problem and its uniqueness should probably refer to \cite{Boyer2005}.
%

\noindent Finally, we mention that the boundary value can be interpreted as a flux condition in the case of a density solution, that is $\mu_t = f(t,x)dx$, and it reads as mentioned in Equation \eqref{eq:boundary_flux}.

\section{Stochastic equations, martingale properties and rescaling} \label{sec:martingale}

In this section we first introduce the probabilistic objects we use to define the stochastic equation for the measure-valued SBD process (Poisson processes). Both the stochastic equation and the generator are used later to obtain accurate moment estimates. Then we introduce the scaling which allows us to define the process given in Definition \ref{def:rescaleSBD}. And finally we state some properties of this rescaled process.

\subsection{The original SBD process}

We define below, first the probabilistic objects we use and then the stochastic differential equation satisfied by the (original) empirical measure \eqref{eq:empirical_measure}. To make some estimates easier later on, we use four distinct Poisson measures to classify the different reactions in Equation \eqref{eq:chemreact_intro}.

\begin{definition}[Probabilistic objects] \label{def:objects}
Let $(\Omega,\Fc, \Pb)$ a sufficiently large probability space. $\esp{\cdot}$ denotes the expectation. We define on this space four independent random Poisson point measures 
\begin{itemize}
 \item[i)] The nucleation Poisson point measure $Q_1(dt,dy)$ on $\Rb_+\times\Rb_+$ with intensity \[ \esp{Q_1(dt,dy)} = dtdy  \,. \]
 \item[ii)] The de-nucleation Poisson point measure $Q_2(dt,dy)$ on $\Rb_+\times\Rb_+$ with intensity \[ \esp{Q_2(dt,dy)} = dtdy  \,. \]
 \item[iii)] The aggregation Poisson point measure $Q_3(dt,dy,di)$ on $\Rb_+\times\Rb_+\times \Nb\backslash\{0,1\} $ with intensity \[ \esp{Q_3(dt,dy,di)} = dtdy \ \#(di) \,. \]
 \item[iv)] The fragmentation Poisson point measure $Q_4(dt,dy,di)$ on  $\Rb_+\times\Rb_+\times \Nb\backslash\{0,1,2\}$ with intensity \[\esp{Q_4(dt,dy,di)} = dtdy\ \#(di) \,. \]
 \end{itemize}
where $dt$ and $dy$ are Lebesgue measures on $\Rb^+$, and $\#(di)$ is the counting measure on $\Nb$. Moreover, we define two more independent (from the above) random elements
\begin{itemize}
 \item[v)] The initial distribution $\mu_{\rm in}$ is a $\Xc$-valued random variable such that \as~ $\mu_{\rm in}$ belongs to $\Mc_\delta$.
 \item[vi)] The initial quantity of free particles $C_{1, \rm in}$ is a $\Nb$-valued random variable (\as~finite).
\end{itemize}
Finally, we define the canonical filtration $(\mathcal F_t)_{t\geq0}$ associated to the Poisson point measures such that $\mu_{in}$ and $C_{1, \rm in}$ are measurable too.
\end{definition}
Now we give a definition-proposition of the measure formulation of the SBD model.

\begin{definition}[SBD process] \label{def:eds_BD}

Assume the probabilistic objects of Definition \ref{def:objects} given. A {\it measure-valued Stochastic Becker-D\"oring process} is the unique $\Xc$-valued c\`adl\`ag process $\mu=(\mu_t)_{t\geq 0}$ taking its values in $\Mc_\delta$ and satisfying, \as~and for all $t\geq 0$,
\begin{equation} \label{eq:eds}
 \begin{array}{rl}
 \ds \mu_t  =  \mu_{\rm in}  & \ds +  \int_0^t \int_{\Rb_+} \delta_{2} \indic{ \Enbrace{y\leq  a_1C_1(s^-)(C_1(s^-)-1)} } \, Q_1(ds,dy)  \\
 		     & \ds - \int_0^t \int_{\Rb_+} \delta_{2} \indic{ \Enbrace{y\leq   b_2 \mu_{s^-}(\{2\}) } } \, Q_2(ds,dy)  \\
		     & \ds + \int_0^t \int_{\Rb_+\times \Nb\backslash\{0,1\} } \left(\delta_{i+1}-\delta_{i}\right) \indic{ \Enbrace{y\leq  a_i C_1(s^-) \mu_{s^-}(\{i\}) } } \, Q_3(ds,dy,di) \\
		     & \ds - \int_0^t \int_{\Rb_+ \times \Nb\backslash\{0,1,2\} } \left(\delta_i-\delta_{i-1}\right) \indic{ \Enbrace{y\leq  b_i \mu_{s^-}(\{i\}) }} \, Q_4(ds,dy,di)\,, 
 \end{array}
\end{equation}
with the balance law given, also \as~for all $t\geq 0$, by
\begin{equation} \label{eq:constraint}
C_1(t) + \brak{\mu_t}{\mathrm{Id}} = M \,,
\end{equation}
where $M$ is given by the initial state.
\end{definition}

\begin{remark}
 The total mass $M$ is a random element defined by $M := C_{1,\rm in} + \brak{\mu_{\rm in}}{\mathrm{Id}}$ and is \as~finite. Moreover, the SBD process satisfies, \as~for all $t\geq 0$, $\lb \mu_t,\indic{} \rb \leq \frac{M}{2}$.
We emphasize that this stochastic process is still evolving in a finite state space that is a subset of 
$\{\nu \in \Mc_{\delta} \, : \, \sum_{i\geq 2} i \nu(\{i\}) \leq  M\}$. Hence all properties on non-explosion, generator and martingale properties are trivial. In particular, the generator of the solution $\mu$ of Equation \eqref{eq:eds} is given by \eqref{eq:gene_BD_stoch}.
\end{remark}

\subsection{Definition of the scaling}\label{sec:rescale}

The classical approach to operate a scaling is to write the equations in a dimensionless form. We follow \cite{Collet2002} and introduce the following characteristic values:

\begin{itemize}
 \item $\overline T$ : characteristic time,
 \item $\overline C_1$ : characteristic value for the free particle number $C_1$ ,
 \item $\overline C$ : characteristic value for the cluster number $\mu(\{i\})$, for $i \geq 2$,
 \item $\overline A_1$ : characteristic value for the first aggregation coefficient $a_1$,
 \item $\overline B_2$ : characteristic value for the first fragmentation coefficient $b_2$,
 \item $\overline A$: characteristic value for the aggregation coefficients $a_i$, $i\geq 2$,
 \item $\overline B$: characteristic value for the fragmentation coefficients $b_i$, $i\geq3$, 
 \item $\overline M_c$ : characteristic value for the total mass.
\end{itemize}
Thus, the dimensionless quantities are 
\[\tau = t/\overline T \,, \quad \tilde m = M/\overline M_c\,, \quad \tilde u(\tau) = C_1(\tau T)/\overline C_1\,, \]
for all $i\geq 2$,
\[\tilde a_i  =  a_i/\overline A\,, \quad  \tilde b_{i+1}  =   b_{i+1}/\overline B\,, \]
and the particular scaling at the boundary (we use different letters to emphasize this point):
\[\tilde \alpha  :=  a_1/ \overline A_1\,, \quad \tilde \beta :=   b_2/ \overline B_2\,. \]
We introduce the scaling parameter $\veps>0$ for the size of the clusters, and our model is derived from the following choices of relation:
\[ \overline C = \frac 1 \veps\,, \ \overline C/ \overline C_1 = \veps \,,\quad   \overline A \, \overline C_1\overline  T = \overline B \, \overline T =  \frac{1}{\veps}\,,\quad   \overline M_c / \overline C_1 = 1,\]
and 
\[  \overline A_1 = \veps^{2} \overline A \,, \quad  \overline B_2 = \veps \overline B\,. \]
The reader interested in a physical justification of this scaling can refer to \cite{Collet2002}. The only difference we made here from \cite{Collet2002} is a slowdown of the first fragmentation rate (de-nucleation). This turns out to be equivalent to assume that (asymptotically) the nucleation is irreversible. Finally, we rescale the measure (with an explicit dependence on $\veps$) by
\begin{equation} \label{eq:def_rescale_measure}
\tilde \mu_\tau^\veps = \sum_{i\geq 2}  \frac{\mu_{\tau \overline T}(\{i\})}{\overline C} \delta_{\veps i}\,.
\end{equation}
We can now write the SDE on the rescaled measure, dropping tilde but mentioning the explicit dependence on $\veps$ for all the coefficients: \as~for all $t \geq 0$,
\begin{equation} \label{eq:rescaled_eds}
 \begin{array}{rl}
 \ds  \mu_t^\veps =   \mu^\veps_{\rm in}  & \ds +  \int_0^t \int_{\Rb_+} \veps \delta_{2\veps} \indic{ \Enbrace{y\leq  \alpha^\veps u^\veps(s^-)(u^\veps(s^-)-\veps^2)/(\veps \overline T)} } \, Q_1(\overline T ds,dy)  \\
 		     & \ds - \int_0^t \int_{\Rb_+} \veps \delta_{2\veps} \indic{ \Enbrace{y\leq  \beta^\veps \mu^\veps_{s^-}(\{2\veps\}) / (\veps \overline T) } } \, Q_2(\overline T ds,dy)  \\
		     & \ds + \int_0^t \int_{\Rb_+\times \Nb\backslash\{0,1\} } \veps (\delta_{\veps i+\veps}-\delta_{\veps i}) \indic{ \Enbrace{y\leq a^\veps_i u^\veps(s^-) \mu^\veps_{s^-}(\{\veps i\}) / (\veps^{2} \overline T) } } \, Q_3(\overline T ds,dy,di) \\
		     & \ds - \int_0^t \int_{\Rb_+ \times \Nb\backslash\{0,1,2\} } \veps (\delta_{\veps i}-\delta_{\veps i-\veps}) \indic{ \Enbrace{y\leq   b^\veps_i \mu^\veps_{s^-}(\{\veps i\}) /(\veps^{2} \overline T) } } \, Q_4(\overline T ds,dy,di)\,, 
 \end{array}
\end{equation}
with the balance law given, \as~for all $t\geq 0$, by
\begin{equation} \label{eq:rescaled_constraint}
 u^\veps(t) +  \lb \mu^\veps_t , \mathrm{Id} \rb  =  m^\veps\,,
\end{equation}
\subsection{The rescaled equation}

Let the characteristic function $\indic{S}$, for any set $S\in\Rb_+$, be $1$ on $S$ and $0$ elsewhere. We define the rate functions: 
\begin{equation} \label{eq:rescale_coeffs}
\begin{array}{rcll}
\ds  a^\veps(x)  &:=& \ds  a_i^\veps \indic{[\veps i,\veps (i+1))}, &\ds  \forall i \geq 2,   \\
\ds  b^\veps(x)  &:=& \ds  b_i^\veps \indic{[\veps i,\veps (i+1))}, &\ds  \forall i \geq 3. \\
\end{array}
\end{equation}

Again, the solution of (\ref{eq:rescaled_eds}-\ref{eq:rescaled_constraint}) is unique and has generator given by Equation \eqref{eq:gene_BD_stoch_rescale} in Definition \ref{def:rescaleSBD}. We note by $(\Fc_t^\veps)_{t\geq0}$ the canonical filtration associated to the process $\mu^{\veps}$.


In the following proposition, we test Equation \eqref{eq:rescaled_eds} against a function $\vphi$, measurable and locally bounded from $\Rb_+$ to $\Rb$. It will be useful to identify the limit equation. 

\begin{proposition}\label{cor:rescaled_process}
Let $\mu^\veps$ be the solution of (\ref{eq:rescaled_eds}-\ref{eq:rescaled_constraint}) for each $\veps>0$ and $\vphi$ a locally bounded and measurable real-valued function on $\Rb_+$. Then, for all $t\geq0$ 
\begin{equation}\label{eds_weak}
 \brak{\mu_t^\veps}{\vphi} = \brak{\mu_{\rm in}^\veps}{\vphi} + \Vc^{\veps,\vphi}_t + \Oc_t^{\veps,\vphi} \, , 
\end{equation}
where $\Vc^{\veps,\vphi}_t$ is the finite-variation part of $\brak{\mu_t^\veps}{\vphi}$ given by
\begin{multline}\label{eq:varfinie_rescale}
\ds \Vc^{\veps,\vphi}_t   =     \int_0^t \vphi(2\veps) \left[ \alpha^\veps u^\veps(s)(u^\veps(s)-\veps^2) - \beta^\veps \mu_{s}^\veps (\{2\veps\})  \right] \vphantom{\sum_{i\geq k}}\, ds \\
+   \int_0^t \int_{2 \veps}^{+\infty} \Delta_{\veps}(\vphi)  a^\veps(x) u^\veps(s) \mu_{s}^\veps(dx) \, ds \vphantom{\sum_{i\geq k}} \\
-  \int_0^t \int_{3\veps}^{+\infty} \Delta_{-\veps}(\vphi)(x)  b^\veps(x) \mu_{s}^\veps(dx)\, ds \vphantom{\sum_{i\geq k}} \,,
\end{multline}
with $\Delta_{h}(\vphi)(x) = (\vphi(x+h)-\vphi(x))/h$. Moreover,  $\Oc_t^{\veps,\vphi}$ is a $L^2-(\mathcal F_t^\veps)_{t\geq0}$ martingale starting from $0$ with (predictable) quadratic variation

\begin{multline}\label{eq:martingale_rescale}
 \langle \Oc^{\veps,\vphi} \rangle_t   =   \ds   \veps \int_0^t \vphi(2\veps)^2  \left[  \alpha^\veps u^\veps(s)(u^\veps(s)-\veps^2) + \beta^\veps \mu_{s}^\veps (\{2\veps\})  \right] \vphantom{\sum_{i\geq k}}\, ds \\
 +   \veps^{2} \int_0^t \int_{2\veps}^{+\infty}      \left( \Delta_{\veps}(\vphi)(x) \right)^2 a^\veps(x) u^\veps (s) \mu_{s}^\veps(dx) \, ds \vphantom{\sum_{i\geq k}} \\
 +  \veps^{2}\int_0^t \int_{3\veps}^{+\infty}     \left( \Delta_{-\veps}(\vphi)(x) \right)^2 b^\veps(x) \mu_{s}^\veps(dx)\, ds \vphantom{\sum_{i\geq k}} \,.
\end{multline}
\end{proposition}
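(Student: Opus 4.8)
The statement is the semimartingale decomposition obtained by testing the stochastic equation \eqref{eq:rescaled_eds} against $\vphi$, so the plan is to pair \eqref{eq:rescaled_eds} with $\vphi$ and then split each of the four Poisson integrals into its predictable compensator plus the associated compensated (hence martingale) integral. First I would apply $\brak{\cdot}{\vphi}$ to both sides of \eqref{eq:rescaled_eds}. Using $\brak{\veps\delta_{2\veps}}{\vphi}=\veps\vphi(2\veps)$, $\brak{\veps(\delta_{\veps i+\veps}-\delta_{\veps i})}{\vphi}=\veps^2\Delta_\veps(\vphi)(\veps i)$ and $\brak{\veps(\delta_{\veps i}-\delta_{\veps i-\veps})}{\vphi}=\veps^2\Delta_{-\veps}(\vphi)(\veps i)$, each stochastic integral becomes a scalar-valued integral against the corresponding $Q_k$; with the definition \eqref{eq:rescale_coeffs} of the rate functions, the sums over $i$ are rewritten as integrals in $dx$ against $\mu^\veps_{s^-}$ over $[2\veps,\infty)$ and $[3\veps,\infty)$.

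\textbf{Compensation.} For each $k$ I would write $Q_k(\overline T\,ds,\cdot)=\widetilde Q_k(\overline T\,ds,\cdot)+\Eb[Q_k(\overline T\,ds,\cdot)]$ with $\widetilde Q_k$ the compensated measure. In the compensator, integrating the indicators $\indic{\{y\leq\lambda\}}$ over $dy$ returns the intensity $\lambda$, and the time change $\overline T\,ds$ exactly cancels the $1/(\veps\overline T)$ and $1/(\veps^2\overline T)$ normalisations in \eqref{eq:rescaled_eds}; collecting these contributions and multiplying by the respective jump sizes reproduces $\Vc^{\veps,\vphi}_t$ as written in \eqref{eq:varfinie_rescale}. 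The terms involving the $\widetilde Q_k$ define $\Oc^{\veps,\vphi}_t$, which manifestly starts from $0$.

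\textbf{Martingale and bracket.} It then remains to check that $\Oc^{\veps,\vphi}$ is a genuine $L^2$-martingale and to compute $\langle\Oc^{\veps,\vphi}\rangle$. Here I would use that, for each fixed $\veps$, the process $\mu^\veps$ evolves in a \emph{finite} state space (as for the original SBD process, cf. the Remark following Definition \ref{def:eds_BD}), with $\brak{\mu^\veps_t}{\Id}\leq m^\veps$ and $u^\veps(t)\leq m^\veps$; consequently the number of atoms and the maximal size are bounded by deterministic constants, all integrands in the compensators are bounded on any $[0,t]$, and the classical theory of integration against Poisson random measures (see e.g. \cite{EthierKurtz}) yields the $L^2$-martingale property together with the predictable quadratic variation. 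Since $Q_1,\ldots,Q_4$ are independent, the four compensated integrals are mutually orthogonal $L^2$-martingales, so $\langle\Oc^{\veps,\vphi}\rangle$ is the sum of the four individual brackets, each being the compensator of the sum of squared jumps. The jump of $\brak{\mu^\veps}{\vphi}$ driven by $Q_1,Q_2$ is $\pm\veps\vphi(2\veps)$, and the one driven by $Q_3$ (resp.\ $Q_4$) at a size $\veps i$ is $\veps^2\Delta_\veps(\vphi)(\veps i)$ (resp.\ $\veps^2\Delta_{-\veps}(\vphi)(\veps i)$); squaring, multiplying by the corresponding intensities, integrating and accounting for the time change, the powers of $\veps$ combine into the prefactors $\veps$ and $\veps^2$ appearing in \eqref{eq:martingale_rescale}.

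\textbf{Main obstacle.} There is no deep difficulty: the work is the bookkeeping of the powers of $\veps$ and of the time change $Q_k(\overline T\,ds,\cdot)$ in the compensation step, and the one genuinely technical point is upgrading the compensated integrals from local to true $L^2$-martingales. The latter is painless precisely because each $\mu^\veps$ lives in a finite state space, so one should invoke this reduction explicitly rather than the a priori moment bounds of Section \ref{sec:technical}, which are not yet available at this stage of the paper.
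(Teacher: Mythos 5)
Your proposal is correct and follows exactly the route the paper intends (and omits as standard): test \eqref{eq:rescaled_eds} against $\vphi$, split each Poisson integral into its compensator plus the compensated martingale, and use the orthogonality of the four independent measures together with the finite-state-space reduction to obtain the $L^2$ property and the bracket \eqref{eq:martingale_rescale}; your bookkeeping of the jump sizes, the $1/(\veps\overline T)$ and $1/(\veps^{2}\overline T)$ normalisations and the time change is accurate. The only caveat is that the bound on the state space is through $m^\veps$, which is random rather than deterministic at the level of Definition \ref{def:rescaleSBD}, so strictly speaking the unconditional $L^2$ statement uses the integrability of the initial data (automatic under the hypotheses of Theorem \ref{thm:SBD_rescale_limit}, where $\mu^\veps_{\rm in}$ lives a.s.\ in a fixed compact); the paper glosses over this in the same way.
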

We attempt to pass to the limit in \eqref{eds_weak} when enough compactness is available. We want the finite variation \eqref{eq:varfinie_rescale} to converge to the weak form of the Lifshitz-Slyozov operator (including boundary value) and the martingale \eqref{eq:martingale_rescale} to vanish (its quadratic variation) to recover a weak formulation of the deterministic problem at the limit in \eqref{eds_weak}. For that, we need moment estimates and tightness properties to obtain the compactness in the appropriate space. These are the results presented in the next section.

\section{Estimations and technical results} \label{sec:technical}

\subsection{Moment estimates}

To prove the convergence of $\{\mu^\veps\}$, the sequence of measure-valued SBD processes constructed as a solution of equations (\ref{eq:rescaled_eds}-\ref{eq:rescaled_constraint}), we will rely on compactness arguments (or tightness). These are achieved, in particular, thanks to moment estimates that are uniform with respect to $\veps$. In this section, we provide the appropriate estimates that will be necessary in the next sections. 

The first proposition provides a control of the total mass of the measure, namely $\brak{\mu_t^\veps}{\indic{}}$ where $\indic{}=\indic{\Rb_+}$, then a $L^\infty$ control of the free particle $u^\veps_t$ and of the first $x$-moment $\lb \mu^\veps_t , \Id \rb$.

\begin{proposition}\label{prop:moment}   
Let the hypotheses of Theorem \ref{thm:SBD_rescale_limit} hold. Let $\mu^\veps$ be the solution  of (\ref{eq:rescaled_eds}-\ref{eq:rescaled_constraint}) for each $\veps>0$. Then, for all $T>0$ we have that

\begin{align}
  &\sup_{\veps>0} \, \sup_{t\in[0,T]} u^\veps(t)  < +\infty \,, \ \as \label{eq:C_uni_bound} \\
  & \sup_{\veps>0} \, \sup_{t\in[0,T]} \lb \mu^\veps_t , \Id \rb < +\infty \,, \ \as \label{eq:mu,Id} \vphantom{ \esp{\sup_{t\in[0,T]}\brak{\mu_t^\veps}{\indic{}}} }\\
  & \sup_{\veps>0} \esp{\sup_{t\in[0,T]}\brak{\mu_t^\veps}{\indic{}}}  < + \infty\,. \label{eq:mu,1}
\end{align}
\end{proposition}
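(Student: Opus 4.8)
The plan is to exploit the physical structure of the Becker-Döring dynamics: the total number of particles is conserved (the balance law \eqref{eq:rescaled_constraint}), so the first moment $\langle \mu_t^\veps, \Id\rangle$ and the free-particle quantity $u^\veps(t)$ are automatically bounded, and only the total number of clusters $\langle \mu_t^\veps,\indic{}\rangle$ requires a genuine estimate. I would proceed in three steps, in the order \eqref{eq:C_uni_bound}--\eqref{eq:mu,Id} first (essentially for free), then \eqref{eq:mu,1}.

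\emph{Step 1: the conserved quantities \eqref{eq:C_uni_bound} and \eqref{eq:mu,Id}.} From \eqref{eq:rescaled_constraint} we have $u^\veps(t) + \langle\mu_t^\veps,\Id\rangle = m^\veps$ almost surely for all $t\geq0$. Since $\mu_t^\veps$ takes its values in $\Mc_\delta^\veps$, it is a non-negative measure, whence $\langle\mu_t^\veps,\Id\rangle\geq0$ and $u^\veps(t)\geq0$ (indeed $u^\veps(t)\in\veps\Nb$). Therefore $0\leq u^\veps(t)\leq m^\veps$ and $0\leq\langle\mu_t^\veps,\Id\rangle\leq m^\veps$ for all $t$, uniformly in $t$. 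It then suffices to check that $\sup_{\veps>0}m^\veps<+\infty$: this follows from the hypotheses of Theorem \ref{thm:SBD_rescale_limit}, since $m^\veps = u^\veps_{\rm in} + \langle\mu^\veps_{\rm in},\Id\rangle$, with $u^\veps_{\rm in}$ converging to the deterministic $u_{\rm in}$ (hence bounded) and $\mu^\veps_{\rm in}$ staying almost surely in the weakly compact set $\Kc$ of $\Xc$, on which $\nu\mapsto\langle\nu,\Id\rangle$ is bounded (weak compactness in $\Xc$ controls the first moment, see \ref{app:space_X}). This proves \eqref{eq:C_uni_bound} and \eqref{eq:mu,Id} with the common bound $\sup_\veps m^\veps$.

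\emph{Step 2: the number of clusters \eqref{eq:mu,1}.} Here I would apply Proposition \ref{cor:rescaled_process} with the test function $\vphi=\indic{}$, i.e.\ $\vphi\equiv1$. Then $\Delta_{\veps}(\vphi)\equiv0$ and $\Delta_{-\veps}(\vphi)\equiv0$, so the two integral terms in the finite-variation part \eqref{eq:varfinie_rescale} vanish and we are left with
\begin{equation*}
 \langle\mu_t^\veps,\indic{}\rangle = \langle\mu_{\rm in}^\veps,\indic{}\rangle + \int_0^t\left[\alpha^\veps u^\veps(s)(u^\veps(s)-\veps^2) - \beta^\veps\mu_s^\veps(\{2\veps\})\right]ds + \Oc_t^{\veps,\indic{}}.
\end{equation*}
Dropping the non-positive de-nucleation term $-\int_0^t\beta^\veps\mu_s^\veps(\{2\veps\})\,ds\leq0$ and bounding $u^\veps(s)(u^\veps(s)-\veps^2)\leq (u^\veps(s))^2\leq(m^\veps)^2$ by Step 1, together with $\alpha^\veps$ bounded by \eqref{H1}, one gets $\langle\mu_t^\veps,\indic{}\rangle \leq \langle\mu_{\rm in}^\veps,\indic{}\rangle + C\,t + \Oc_t^{\veps,\indic{}}$ for a constant $C$ independent of $\veps$ and $t\in[0,T]$. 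Taking the supremum over $t\in[0,T]$ and then expectations, and using that $\Oc^{\veps,\indic{}}$ is an $L^2$-martingale starting from $0$, Doob's $L^2$-inequality gives
\begin{equation*}
 \esp{\sup_{t\in[0,T]}\langle\mu_t^\veps,\indic{}\rangle} \leq \esp{\langle\mu_{\rm in}^\veps,\indic{}\rangle} + CT + 2\,\esp{\langle\Oc^{\veps,\indic{}}\rangle_T}^{1/2}.
\end{equation*}
From \eqref{eq:martingale_rescale} with $\vphi=\indic{}$ only the first (boundary) term survives, $\langle\Oc^{\veps,\indic{}}\rangle_T = \veps\int_0^T\left[\alpha^\veps u^\veps(s)(u^\veps(s)-\veps^2) + \beta^\veps\mu_s^\veps(\{2\veps\})\right]ds$; the $\alpha^\veps$-part is bounded by $\veps\,\alpha^\veps(m^\veps)^2 T\to0$, and the $\beta^\veps$-part is bounded by $\veps\,\beta^\veps\int_0^T\mu_s^\veps(\{2\veps\})\,ds \leq \veps\,\beta^\veps\int_0^T\langle\mu_s^\veps,\indic{}\rangle\,ds$. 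This last piece is the only circularity, and it is mild: since it carries a prefactor $\veps$, one closes the estimate by a Gronwall-type argument on $g(T):=\esp{\sup_{t\in[0,T]}\langle\mu_t^\veps,\indic{}\rangle}$, using $\esp{\langle\Oc^{\veps,\indic{}}\rangle_T}^{1/2}\leq (\veps\beta^\veps)^{1/2}(\int_0^T g(s)\,ds)^{1/2} + o(1)$ and the elementary inequality $2\sqrt{xy}\leq \eta x + \eta^{-1}y$; alternatively, since the state space is finite for fixed $\veps$ one first knows $g(T)<\infty$ and then iterates. Finally $\sup_\veps\esp{\langle\mu_{\rm in}^\veps,\indic{}\rangle}<\infty$ because $\mu_{\rm in}^\veps\in\Kc$ almost surely and $\langle\nu,\indic{}\rangle\leq\langle\nu,\Id\rangle/(2\veps)$… — more simply, $\langle\mu_{\rm in}^\veps,\indic{}\rangle$ is bounded on $\Kc$ if $\Kc$ controls the total mass, or one assumes it as part of the compactness of $\Kc$. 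Collecting the bounds yields \eqref{eq:mu,1}.

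\emph{Main obstacle.} The only non-trivial point is the mild self-reference in \eqref{eq:mu,1}: the quadratic variation of the martingale involves $\mu_s^\veps(\{2\veps\})$, which is controlled only by $\langle\mu_s^\veps,\indic{}\rangle$ itself. The saving grace is the explicit $\veps$ prefactor coming from the scaling of jump sizes in \eqref{eq:martingale_rescale} (jumps of amplitude $\veps$ in $\langle\cdot,\indic{}\rangle$ with rate $O(\veps^{-1})$, so variance $O(\veps)$), which makes the feedback term vanish in the limit and lets a Gronwall argument close; rigorously one should first invoke finiteness of the state space for fixed $\veps$ to justify the Gronwall iteration before taking suprema over $\veps$. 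Everything else is a direct consequence of mass conservation \eqref{eq:rescaled_constraint} and the hypotheses on the rescaled parameters.
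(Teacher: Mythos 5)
Your proposal is correct, and Step 1 coincides with the paper's argument: both derive \eqref{eq:C_uni_bound} and \eqref{eq:mu,Id} from the balance law \eqref{eq:rescaled_constraint} together with the uniform bounds on $u^\veps_{\rm in}$ and on $\lb \nu,\Id\rb$, $\lb\nu,\indic{}\rb$ over the compact $\Kc$. For \eqref{eq:mu,1}, however, you take a genuinely different (and more roundabout) route. The paper does not pass through the compensated decomposition of Proposition \ref{cor:rescaled_process} at all: it tests the \emph{raw} stochastic equation \eqref{eq:rescaled_eds} against $\indic{}$, observes that the aggregation and fragmentation terms contribute nothing to $\lb\mu^\veps_t,\indic{}\rb$ (since $\delta_{x\pm\veps}-\delta_x$ has zero total mass) and that the only remaining positive contribution is the nucleation Poisson integral, which is a nondecreasing process; hence $\sup_{t\le T}$ is attained at $T$ and the expectation is just the intensity bound $K_0T$ — no martingale inequality and no Gronwall argument are needed. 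Your version, which keeps the drift-plus-martingale splitting and controls $\esp{\sup_t\lvert\Oc^{\veps,\indic{}}_t\rvert}$ by Doob's $L^2$ inequality, is valid, but it creates the circularity you flag (the bracket $\lb\Oc^{\veps,\indic{}}\rb_T$ contains $\beta^\veps\mu^\veps_s(\{2\veps\})\le\beta^\veps\lb\mu^\veps_s,\indic{}\rb$), which you then have to close with Young's inequality and Gronwall, after first invoking finiteness of the state space for fixed $\veps$ to make the a priori quantity finite. All of these steps are sound — the $\veps$ prefactor in the bracket does make the feedback harmless uniformly in $\veps$ along the sequence (where $\beta^\veps$ is bounded by \eqref{H2}) — so the proof works; it is simply longer than necessary, the monotonicity of the uncompensated nucleation term being the shortcut the paper exploits. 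One cosmetic point: your hesitation about $\sup_{\nu\in\Kc}\lb\nu,\indic{}\rb<\infty$ is not needed — this is automatic from weak compactness of $\Kc$ in $(\Xc,w)$, since $\nu\mapsto\lb\nu,\indic{}\rb$ is continuous for that topology (see \ref{app:space_X}), which is exactly what the paper uses in its opening lines.
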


\begin{proof}


Since the sequence $\lbrace \mu^\veps_{\rm in}\rbrace$ lives \as\ in a compact $\Kc$ of $\Xc$, we have \as\ 

\[ \sup_{\veps>0} \, \lb \mu^\veps_{\rm in} , \indic{} \rb  < +\infty  \,, \text{ and } \sup_{\veps>0}\,  \lb \mu^\veps_{\rm in} , \Id \rb  < +\infty \,. \]
For the same reason, \as  
\[ \sup_{\veps>0} \, u^\veps_{\rm in}  < +\infty \,.\]
Thus, the conservation of mass \eqref{eq:rescaled_constraint} yields for all $t\in[0,T]$ and $\veps >0 $
\[u^\veps(t) \leq  m^\veps = u^\veps_{\rm in} + \lb \mu^\veps_{\rm in} , \Id \rb \,.\]
Thanks to our first remark, \eqref{eq:C_uni_bound} holds true by taking both supremum in time and $\veps$. We can similarly show \eqref{eq:mu,Id}. 

Let us now prove \eqref{eq:mu,1}. From the stochastic differential equation \eqref{eq:rescaled_eds} on $\mu^\veps$, dropping the non-positive terms, we have
\begin{equation} \label{estim_mu,1_without_negative_term}
 \brak{\mu_t^\veps}{\indic{}} \leq \brak{\mu_{\rm in}^\veps}{\indic{}} +  \int_0^t \int_{\Rb_+}  \veps \indic{ \Enbrace{y\leq  \alpha^\veps u^\veps(s^-)(u^\veps(s^-)-\veps^2)/(\veps \overline T) } } \, Q_1(\overline T ds,dy)\,.
\end{equation}
Thanks to \eqref{eq:C_uni_bound} and the convergence of $\alpha^\veps$ in \eqref{H1}, it exists a constant $K_0$ such that \as
\begin{equation} \label{eq:k_c_less_Kk}
\sup_{\veps>0} \, \sup_{t\in[0,T]} \alpha^\veps u^\veps(t)(u^\veps(t)-\veps^2) \leq K_0\,.
\end{equation}
Thus, in \eqref{estim_mu,1_without_negative_term}, taking the supremum in time on $[0,T]$
\begin{equation*}
 \sup_{t\in[0,T]} \brak{\mu_t^\veps}{\indic{}} \leq \brak{\mu_{\rm in}^\veps}{\indic{}} +  \int_0^T \int_{\Rb_+} \veps  \indic{ \Enbrace{y\leq K_0 /(\veps \overline T)} } \, Q_1(\overline T ds,dy). 
\end{equation*}
Thanks to the uniform \as~bound on the initial moment, represented here by the constant $K_1$, we conclude by taking the mean that
\begin{equation*}
 \esp{ \sup_{t\in[0,T]} \brak{\mu_t^\veps}{\indic{}} } \leq K_1 +  K_0 T\,.
\end{equation*} 

\end{proof}

The method we use here to prove the convergence needs a uniform control on superlinear moments of $\mu^\veps$ so that it controls the tail at infinity. To that, let us introduce the set $\Uc_1$ of nonnegative functions $\Phi$, convex and belonging to $\Cc^1([0,+\infty))\cap W^{2,\infty}_{loc}((0,+\infty))$ such that $\Phi(0)=0$, $\Phi'$ is concave and $\Phi'(0)\geq 0$ and the set $\Uc_\infty$ of nonnegative increasing convex functions $\Phi$ such that
\[ \lim_{x\to +\infty} \frac{\Phi(x)}{x} = + \infty\,. \]
We denote by $\Uc_{1,\infty} := \Uc_{1} \cap \Uc_{\infty}$. These functions have remarkable properties when conjugate to the structure of the SBD equation and provide important estimates as in the deterministic case, see for instance \cite{Laurenccot2002}. 

\begin{remark}
Any function $x\mapsto x^{1+\eta}$ with $\eta \in(0,1)$ belongs to the set $\Uc_{1,\infty}$. Such a function $\Phi$ in $\Uc_{1,\infty}$ roughly represents a moment slightly greater than $1$.  
\end{remark}
The next proposition provides the propagation of an extra $x$-moment of the rescaled measure-valued SBD process that will be necessary to prove the tightness in $(\Xc,w)$.

\begin{proposition} \label{prop:moment_phi}
Let the hypotheses of Theorem \ref{thm:SBD_rescale_limit} hold. Let $\mu^\veps$ be the solution  of (\ref{eq:rescaled_eds}-\ref{eq:rescaled_constraint}) for each $\veps>0$. It exists $\Phi_1 \in \Uc_{1,\infty} $ such that, for all $T>0$ we have
 
 \[ \sup_{\veps>0} \esp{\sup_{t\in[0,T]} \lb \mu_t^\veps ,\Phi_1 \rb} < +\infty \,.\]
 
\end{proposition}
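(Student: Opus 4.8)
The plan is to obtain a uniform (in $\veps$) bound on $\esp{\sup_{t\in[0,T]}\lb\mu_t^\veps,\Phi\rb}$ for a suitable $\Phi\in\Uc_{1,\infty}$ by applying the stochastic equation \eqref{eq:rescaled_eds} (equivalently, the weak formulation \eqref{eds_weak}--\eqref{eq:varfinie_rescale}) tested against $\Phi$, and controlling each of the four reaction terms. The key algebraic observation is the convexity/concavity structure of $\Phi\in\Uc_1$: for the aggregation reactions the relevant increment is $\Delta_\veps(\Phi)(x)=(\Phi(x+\veps)-\Phi(x))/\veps\leq\Phi'(x+\veps)\leq\Phi'(x)+\veps\Phi''_\infty$-type bounds, and since $\Phi'$ is concave with $\Phi'(0)\geq0$ one gets $\Phi'(x)\leq\Phi'(0)+ x\,\Phi''(0^+)$ or, more usefully, $\Phi'(x)\leq C(1+\Phi(x)/x)$ and $x\Phi'(x)\lesssim \Phi(x)+x$ (the standard Laurençot-type estimates). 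Combined with the linear growth bounds \eqref{H3}--\eqref{H4} on $a^\veps,b^\veps$, this lets me bound the aggregation drift term $\int\Delta_\veps(\Phi)(x)a^\veps(x)u^\veps(s)\mu_s^\veps(dx)$ by $C(1+u^\veps(s))\big(\lb\mu_s^\veps,\Phi\rb+\lb\mu_s^\veps,\Id\rb+\lb\mu_s^\veps,\indic{}\rb\big)$, and similarly for the fragmentation term (where the increment $\Delta_{-\veps}(\Phi)(x)\geq0$ has a favorable sign, so it can essentially be dropped after the same growth estimate, or contributes with the right sign). The nucleation term contributes $\Phi(2\veps)\alpha^\veps u^\veps(s)^2\to0$ pointwise and is bounded by $C\veps$ uniformly, using $\Phi(2\veps)\le 2\veps\,\Phi'(2\veps)$ and Proposition \ref{prop:moment}.

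Next I would handle the martingale part $\Oc_t^{\veps,\Phi}$: by \eqref{eq:martingale_rescale} its predictable quadratic variation carries prefactors $\veps$ and $\veps^2$, and $(\Delta_{\pm\veps}(\Phi)(x))^2$ is controlled by $(\Phi'(x+\veps))^2$. Using Doob's $L^2$ inequality, $\esp{\sup_{t\le T}|\Oc_t^{\veps,\Phi}|^2}\le 4\,\esp{\langle\Oc^{\veps,\Phi}\rangle_T}$, and the $\veps,\veps^2$ factors together with the bounds already established (plus possibly a slightly higher moment, which is why one wants freedom to choose $\Phi$ of the form $x^{1+\eta}$ with $\eta$ small) show the martingale contribution is bounded uniformly in $\veps$. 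Then I would collect everything into an inequality of the form
\[
\esp{\sup_{s\le t}\lb\mu_s^\veps,\Phi\rb}\le K_1+K_2\int_0^t\esp{\sup_{r\le s}\lb\mu_r^\veps,\Phi\rb}\,ds
\]
where $K_1,K_2$ are finite and independent of $\veps$ (absorbing the $\veps$-bounded nucleation and martingale terms, and using \eqref{eq:C_uni_bound}--\eqref{eq:mu,1} for the lower-order moments), and conclude by Grönwall's lemma that $\sup_{\veps>0}\esp{\sup_{t\le T}\lb\mu_t^\veps,\Phi\rb}\le K_1 e^{K_2 T}<+\infty$.

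The one genuinely delicate point — and the reason the statement only asserts existence of \emph{some} $\Phi_1\in\Uc_{1,\infty}$ rather than propagation of an arbitrary prescribed moment — is the control of the moment of the \emph{initial} data: we only know $\{\mu_{\rm in}^\veps\}$ lives a.s. in a weakly compact set $\Kc$ of $(\Xc,w)$. By the characterization of weak compactness in $\Xc$ (de la Vallée-Poussin type criterion, presumably in the appendix \ref{app:space_X}), such a $\Kc$ being weakly relatively compact yields a single superlinear function $\Phi_1\in\Uc_{1,\infty}$ with $\sup_{\nu\in\Kc}\lb\nu,\Phi_1\rb<\infty$; this is precisely the $\Phi_1$ one must use in the propagation argument above, so the two steps are linked. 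The main obstacle is therefore the careful verification that the convexity estimates on $\Delta_{\pm\veps}(\Phi_1)$, combined with the at-most-linear growth of $a^\veps,b^\veps$, really do close into a linear-in-$\lb\mu^\veps,\Phi_1\rb$ Grönwall inequality without losing a power — i.e. that choosing $\Phi_1(x)\asymp x^{1+\eta}$ with $\eta$ sufficiently small (depending only on the structural constants $K_a,K_b$ and the rate at which the initial-data moment is finite) suffices; this is exactly the discrete-to-continuous analogue of the classical deterministic Becker-Döring moment estimates, and I would mirror the argument of \cite{Laurenccot2002}.
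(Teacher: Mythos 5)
Your overall strategy (extract $\Phi_1$ from the weak compactness of the initial data via a De La Vall\'ee-Poussin argument, exploit the convexity structure $x\Phi_1'(x)\lesssim\Phi_1(x)$ together with the linear growth of $a^\veps,b^\veps$, and close with Gr\"onwall) is exactly the paper's, and your final observation that the \emph{same} $\Phi_1$ must serve both for the initial moment and for the propagation is the correct resolution of why only ``some'' $\Phi_1$ is asserted. Where you genuinely diverge is in how the supremum over time is handled. The paper never touches the martingale: since $\Phi_1\geq0$ is increasing, the de-nucleation and fragmentation jumps can only \emph{decrease} $\lb\mu_t^\veps,\Phi_1\rb$, so one drops them pathwise in the raw SDE \eqref{eq:rescaled_eds} and is left with a nondecreasing upper bound (initial term plus the two positive Poisson integrals), for which $\sup_{t\leq T}$ is obtained for free and expectations are taken directly. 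Your route via the semimartingale decomposition \eqref{eds_weak} plus Doob/BDG on $\Oc^{\veps,\Phi_1}$ is workable but strictly harder, and this is where your write-up has a gap.

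Concretely: you justify the quadratic-variation bound by invoking ``freedom to choose $\Phi$ of the form $x^{1+\eta}$ with $\eta$ small'', but that freedom does not exist --- $\Phi_1$ is dictated by the compact $\Kc$ containing $\{\mu_{\rm in}^\veps\}$ and may grow arbitrarily slowly above linear (e.g.\ like $x\log(1+x)$), so you cannot assume a power-law form, and a priori $\langle\Oc^{\veps,\Phi_1}\rangle_T$ involves $\int(\Phi_1'(x))^2a^\veps(x)\,\mu_s^\veps(dx)$, a moment strictly higher than $\Phi_1$ which is not propagated. The estimate can nevertheless be rescued without changing $\Phi_1$: concavity of $\Phi_1'$ gives $(\Delta_{\pm\veps}\Phi_1(x))^2\leq(\Phi_1'(0)+\Phi_{1,r}''(0)(x+\veps))^2\leq C(1+x^2)$, and since $\mu^\veps$ is supported in $[2\veps,m^\veps/\veps]$ one has $\int x^3\mu_s^\veps(dx)\leq(m^\veps/\veps)^2\lb\mu_s^\veps,\Id\rb$, so the prefactor $\veps^{2}$ in \eqref{eq:martingale_rescale} yields a bound of order $(m^\veps)^3$, uniform in $\veps$ by Proposition \ref{prop:moment}. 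You should either insert this argument explicitly or, more economically, adopt the paper's device of discarding the non-positive jump terms, which makes the martingale estimate unnecessary altogether.
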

 
\begin{proof}
As in the proof of Proposition \ref{prop:moment}, we know that there exists a compact $\Kc$ of $(\Xc,w)$ such that \as~for all $\veps>0$, $\mu^\veps_{\rm in}\in\Kc$. Moreover, for this compact $\Kc$, thanks to refined version of the De La Vall\'ee-Poussin lemma \cite{chauhoan1977,Hingant2015}, it exists $\Phi_1\in\Uc_{1,\infty}$ such that 
\[\sup_{\nu\in \Kc} \, \lb \nu,\Phi_1\rb < +\infty \,.\]
Thus, we obtain

\[\sup_{\veps>0} \, \lb\mu^\veps_{\rm in},\Phi_1\rb < +\infty \,, \ \as \]

Now, let $T>0$. As $\Phi_1$ is nonnegative and increasing (because $\Phi_1(x)\leq x \Phi_1'(x)$ by convexity of $\Phi_1$ and $\Phi_1(0)=0$, so that $\Phi_1'$ is non-negative), we may also drop the non-positive terms to obtain
 
 \begin{multline*}
 \ds \lb \mu_t^\veps ,\Phi_1 \rb   \leq   \ds \lb \mu_{\rm in}^\veps ,\Phi_1 \rb   +  \int_0^t \int_{\Rb_+} \veps \Phi_1(2\veps) \indic{  \Enbrace{y\leq  \alpha^\veps u^\veps(s^-)(u^\veps(s^-)-\veps^2)/(\veps \overline T) } } \, Q_1(\overline T ds,dy)  \\
 + \int_0^t \int_{\Rb_+\times  \Nb\backslash\{0,1\} } \veps \left( \Phi_1(\veps i +\veps)-\Phi_1(\veps i)\right)   \indic{ \Enbrace{ y\leq    a^\veps_i u^\veps(s^-) \mu^\veps_{s^-}(\{\veps i\}) / (\veps^{2} \overline T) }}   Q_3(\overline T ds,dy,di) \,.
\end{multline*}
Then, using the convexity of $\Phi_1$, the concavity of $\Phi_1'$ and then its non-increasing right derivative (denoted $\Phi_{1,\,r}''$),  we have, for all $i\geq 2$
\begin{equation*}
  \Phi_1(\veps i +\veps)-\Phi_1(\veps i) \leq \veps \Phi_1'(\veps i +\veps)  \leq \veps \left( \Phi_1'(\veps i) + \veps \Phi_{1,\, r}''(0) \right)\,.
\end{equation*}
Now, using the bound \eqref{eq:C_uni_bound} denoted by $K_1$, the bound \eqref{eq:k_c_less_Kk}, taking the supremum in time, and then the expectation, it entails
\begin{multline} \label{esp_convex}
\esp{\sup_{\sigma \in(0,t)} \lb \mu_\sigma^\veps ,\Phi_1 \rb }  \leq    \esp{ \lb \mu_{\rm in}^\veps ,\Phi_1 \rb}     +   \veps  \Phi_1(2\veps) K_0 T  \\
 +  K_1 \int_0^t  \esp{ \sup_{ \sigma \in(0,s)} \int_{\Rb_+} (\Phi_1'(x)+\veps \Phi_{1,\, r}''(0))    a^\veps(x)  \mu^\veps_{\sigma}(dx)  } ds  \,.
\end{multline}
Since  $\Phi_1\in \Uc_{1,\infty}$ we have $x\Phi_1'(x) \leq 2 \Phi_1(x)$ for all $x\geq 0$ by \cite[Lemma A.1]{Laurencot2001}. Thus for any $R>0$ and thanks to hypothesis \eqref{H3} on $a^\veps$ we obtain
\begin{multline*} 
 \int_{0}^{+\infty} \Phi_1'(x)    a^\veps(x)  \mu^\veps_{t}(dx)  = \int_{0}^{R} \Phi_1'(x)    a^\veps(x)  \mu^\veps_{t}(dx) + \int_{R}^{+\infty} \Phi_1'(x)    a^\veps(x)  \mu^\veps_{t}(dx) \\
 \leq K_a \int_{0}^{R}  (1+x) \Phi_1'(x)  \mu^\veps_{t}(dx)  + K_a\left(\frac 1 R + 1\right) \int_{R}^{+\infty} x\Phi_1'(x)  \mu^\veps_{t}(dx)\\
  \leq K_a \left(\sup_{x\in(0,R)} \Phi_1'(x) \right) \lb \mu^\veps_{t},\indic{}\rb +  2 K_a \lb \mu^\veps_{t},\Phi_1 \rb + \frac{ 2 K_a}{ R}  \lb \mu^\veps_{t},\Phi_1 \rb \,.\vphantom{\int}
\end{multline*}
Thus, there exists a constant $K$ which depends on $\Phi_1$, $R$, the uniform bound \eqref{eq:mu,1} on $\lb \mu_t^\veps,\indic{} \rb$ and on $K_a$ such that 
\begin{equation}  \label{esp_convex_2}
 \esp{\sup_{ \sigma \in(0,s)} \int_{0}^{+\infty} \Phi_1'(x)    a^\veps(x)  \mu^\veps_{\sigma}(dx)  } \leq K \left( 1+ \esp{ \sup_{ \sigma \in(0,s)} \lb \mu^\veps_{\sigma},\Phi_1 \rb } \right) \vphantom{\int} \, .
\end{equation}
Also, there exists a constant,  still denoted by $K$, which depends on the uniform bound \eqref{eq:mu,1} on $\lb \mu_t^\veps,\indic{} \rb$, the uniform bound \eqref{eq:mu,Id} on  $\lb \mu_t^\veps,\mathrm{Id} \rb$  and on $K_a$  such that
\begin{equation}  \label{esp_convex_3}
 \esp{\sup_{ \sigma \in(0,s)} \int_{0}^{+\infty}  a^\veps(x)  \mu^\veps_{\sigma}(dx)  } \leq K \, .
\end{equation}
Finally, combining \eqref{esp_convex}, \eqref{esp_convex_2} and \eqref{esp_convex_3}, there exists a constant, still denoted by $K$, such that for all $t\in[0,T]$

\begin{equation*} 
\esp{\sup_{\sigma \in(0,t)} \lb \mu_\sigma^\veps ,\Phi_1 \rb }  \leq    K \left( 
 1 +   \int_0^t  \esp{ \sup_{ \sigma \in(0,s)} \lb \mu^\veps_{\sigma},\Phi_1 \rb) }  ds \right)  \, .
\end{equation*}
We get the desired estimation using the Gronwall lemma. 
\end{proof}

We will also need a superlinear control on the total mass $\brak{\mu_t^\veps}{\indic{}}$ avoiding concentration. It will notably be useful to treat the boundary condition.

\begin{proposition}\label{prop_mu_1}
Let the hypotheses of Theorem \ref{thm:SBD_rescale_limit} hold. Let $\mu^\veps$ be the solution  of (\ref{eq:rescaled_eds}-\ref{eq:rescaled_constraint}) for each $\veps>0$. It exists $\Phi_2 \in \Uc_{1,\infty} $ such that, for all $T>0$ we have
\begin{equation*}
  \sup_{\veps>0} \esp{\sup_{t\in[0,T]} \Phi_2( \brak{\mu_t^\veps}{\indic{}})}  < + \infty\,.
\end{equation*}
\end{proposition}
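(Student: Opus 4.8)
The plan is to mimic the strategy of Proposition \ref{prop:moment_phi}, but applied to the scalar process $t\mapsto \brak{\mu_t^\veps}{\indic{}}$ rather than to the measure directly. The key point is that only the nucleation Poisson measure $Q_1$ changes $\brak{\mu_t^\veps}{\indic{}}$ upwards, by jumps of size $\veps$, while de-nucleation decreases it; the aggregation and fragmentation terms leave $\brak{\mu_t^\veps}{\indic{}}$ unchanged. So first I would start from the stochastic equation \eqref{eq:rescaled_eds}, observe that for a convex increasing $\Phi_2$ one has, with $N_t^\veps := \brak{\mu_t^\veps}{\indic{}}$,
\[
 \Phi_2(N_t^\veps) \leq \Phi_2(N_{\rm in}^\veps) + \int_0^t\!\!\int_{\Rb_+} \big(\Phi_2(N_{s^-}^\veps+\veps)-\Phi_2(N_{s^-}^\veps)\big)\,\indic{\{y\le \alpha^\veps u^\veps(s^-)(u^\veps(s^-)-\veps^2)/(\veps\overline T)\}}\,Q_1(\overline T ds,dy),
\]
dropping the non-positive de-nucleation contribution. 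Then, exactly as in the proof of Proposition \ref{prop:moment_phi}, I would bound the increment $\Phi_2(N+\veps)-\Phi_2(N)\leq \veps\,\Phi_2'(N+\veps)\leq \veps(\Phi_2'(N)+\veps\,\Phi_{2,r}''(0))$ using convexity of $\Phi_2$ and concavity of $\Phi_2'$ (valid since $\Phi_2\in\Uc_{1,\infty}\subset\Uc_1$).

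Next I would use \eqref{eq:k_c_less_Kk} to replace $\alpha^\veps u^\veps(s^-)(u^\veps(s^-)-\veps^2)$ by the uniform constant $K_0$, take the supremum over $t\in[0,T]$ and then the expectation, arriving at something like
\[
 \esp{\sup_{s\le t}\Phi_2(N_s^\veps)} \leq \esp{\Phi_2(N_{\rm in}^\veps)} + \veps\,\Phi_{2,r}''(0)\,K_0\,T + K_0\int_0^t \esp{\sup_{\sigma\le s}\Phi_2'(N_\sigma^\veps)}\,ds .
\]
Since $\Phi_2\in\Uc_{1,\infty}$, the inequality $x\Phi_2'(x)\le 2\Phi_2(x)$ from \cite[Lemma A.1]{Laurencot2001} gives $\Phi_2'(x)\le 2\Phi_2(x)/x$; splitting at a radius $R$ as in \eqref{esp_convex_2} (for $x\le R$ bound $\Phi_2'(x)$ by $\sup_{[0,R]}\Phi_2'$ times the uniform bound \eqref{eq:mu,1} on $\esp{\sup N_t^\veps}$, for $x>R$ bound $\Phi_2'(x)\le (2/R)\Phi_2(x)$) yields $\esp{\sup_{\sigma\le s}\Phi_2'(N_\sigma^\veps)}\le K(1+\esp{\sup_{\sigma\le s}\Phi_2(N_\sigma^\veps)})$. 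Plugging this in and applying Gronwall closes the estimate, provided $\esp{\Phi_2(N_{\rm in}^\veps)}$ is bounded uniformly in $\veps$.

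The main obstacle — and the reason a \emph{new} function $\Phi_2$ must be chosen rather than reusing $\Phi_1$ — is precisely securing $\sup_{\veps>0}\esp{\Phi_2(\brak{\mu_{\rm in}^\veps}{\indic{}})}<+\infty$. The hypotheses of Theorem \ref{thm:SBD_rescale_limit} only give that $\{\mu_{\rm in}^\veps\}$ lives a.s. in a fixed weakly compact set $\Kc$ of $\Xc$, which controls $\lb\mu_{\rm in}^\veps,\Phi\rb$ for a suitable $\Phi$ but a priori not a superlinear function of the \emph{total mass} $\lb\mu_{\rm in}^\veps,\indic{}\rb$. Here I would invoke the refined de la Vallée-Poussin lemma \cite{chauhoan1977,Hingant2015}: weak compactness of $\Kc$ in $\Xc$ forces the total masses $\{\lb\nu,\indic{}\rb : \nu\in\Kc\}$ to be relatively compact in $\Rb_+$ (in fact bounded), hence in particular uniformly integrable with respect to the point masses, so there exists $\Phi_2\in\Uc_{1,\infty}$ with $\sup_{\nu\in\Kc}\Phi_2(\lb\nu,\indic{}\rb)<+\infty$; this $\Phi_2$ is the one used throughout. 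Once the initial bound is in place, the remaining steps are the routine convexity/Gronwall argument outlined above, identical in structure to Proposition \ref{prop:moment_phi}.
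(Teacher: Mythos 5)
Your proposal is correct and follows essentially the same route as the paper: secure a uniform bound on $\esp{\Phi_2(\brak{\mu_{\rm in}^\veps}{\indic{}})}$ for some $\Phi_2\in\Uc_{1,\infty}$, then note that only the $Q_1$ (nucleation) jumps increase $\brak{\mu_t^\veps}{\indic{}}$, drop the non-positive de-nucleation contribution, and run the same convexity/Gronwall argument as in Proposition~\ref{prop:moment_phi}. The only (harmless) divergence is the initial-data step: you get $\sup_{\nu\in\Kc}\Phi_2(\lb\nu,\indic{}\rb)<+\infty$ directly from the a.s.\ boundedness of the total masses on the compact $\Kc$ (so any $\Phi_2\in\Uc_{1,\infty}$ works and de la Vall\'ee-Poussin is not really needed there), whereas the paper obtains it by passing through the convergence in law of $\brak{\mu_{\rm in}^\veps}{\indic{}}$ to a deterministic constant, $L^1$ convergence, and de la Vall\'ee-Poussin applied to that family of random variables.
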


\begin{proof}
Since $\mu_{\rm in}^\veps$ converges in law towards the deterministic measure $\mu_{\rm in}$, we deduce that $\brak{\mu_{\rm in}^\veps}{\indic{}}$ converges also in law to the deterministic value $\brak{\mu_{\rm in}}{\indic{}}$. Therefore, a subsequence of $\brak{\mu_{\rm in}^\veps}{\indic{}}$ converges in $L^1$ towards $\brak{\mu_{\rm in}}{\indic{}}$. Again, thanks to refined version of the de La Vall\'ee-Poussin lemma \cite{chauhoan1977,Hingant2015}, it exists $\Phi_2\in\Uc_{1,\infty}$ such that 
\[\sup_{\veps>0} \, \esp{ \Phi_2(\lb \mu^\veps_{\rm in},\indic{}\rb) } < +\infty \,.\]

Now, from the stochastic differential equation \eqref{eq:rescaled_eds} and using that $\Phi_2$ is increasing we drop the non-negative terms
\begin{multline*}
   \Phi_2( \lb \mu_t^\veps , \indic{} \rb)  \leq  \Phi_2( \lb \mu_{\rm in}^\veps , \indic{} \rb) \\ +  \int_0^t \int_{\Rb_+} \Phi_2( \lb \mu_{s^-}^\veps , \indic{} \rb + \veps)  - \Phi_2( \lb \mu_{s^-}^\veps , \indic{} \rb) ] \indic{ \Enbrace{y\leq  \alpha^\veps u^\veps(s^-)(u^\veps(s^-)-\veps^2)/ (\veps \overline T)} } \, Q_1(\overline T ds,dy)  \, .
\end{multline*}
Then the proof follows by the same arguments as Proposition \ref{prop:moment_phi}.
\end{proof}

\subsection{Tightness of the rescaled process}

The aim of this section is to prove the following tightness property of the family $\{\mu^\veps\}$ of $\Xc$-valued processes.  

\begin{proposition}\label{prop:tight_weak}
Let the hypotheses of Theorem \ref{thm:SBD_rescale_limit} hold. Let $\mu^\veps$ be the solution  of (\ref{eq:rescaled_eds}-\ref{eq:rescaled_constraint}) for each $\veps>0$. Then $\{\mu^\veps\}$ is tight in $\Pc(\Dc(\Rb_+,w-\Xc))$ and $\{u^\veps\}$ is tight in $\Pc(\Dc(\Rb_+,\Rb_+))$. Moreover, any accumulation point $\mu$ of $\{\mu^\veps\}$ belongs \as~to $\Cc(\Rb_+,w-\Xc)$.
\end{proposition}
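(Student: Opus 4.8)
The plan is to establish tightness via the classical Aldous-Rebolledo criterion applied to the test-function evaluations $\langle\mu^\veps_t,\vphi\rangle$, and then lift it to tightness in $\Dc(\Rb_+,w-\Xc)$ using a compact-containment condition obtained from the moment estimates of Propositions \ref{prop:moment}, \ref{prop:moment_phi} and \ref{prop_mu_1}. First I would invoke the criterion that, since $(\Xc,w)$ is a Polish space, a sequence $\{\mu^\veps\}$ of $\Dc(\Rb_+,w-\Xc)$-valued processes is tight provided (i) for each fixed $t$, $\{\mu^\veps_t\}$ stays (in probability, up to small mass) in a fixed weakly compact subset of $\Xc$, and (ii) for a suitable separating (convergence-determining) family of functions $\vphi$ — here $\vphi\in\Cc^1_c(\Rb_+)$ together with $\vphi=\indic{}$ and $\vphi=\Id$ to control the relevant functionals — the real-valued processes $\{\langle\mu^\veps_\cdot,\vphi\rangle\}$ are tight in $\Dc(\Rb_+,\Rb)$. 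The compact-containment (i) is exactly what the a priori bounds give: by the De La Vallée-Poussin-type functions $\Phi_1$ and $\Phi_2$, the sets $\{\nu:\langle\nu,\indic{}\rangle\le R,\ \langle\nu,\Id\rangle\le R,\ \langle\nu,\Phi_1\rangle\le R,\ \Phi_2(\langle\nu,\indic{}\rangle)\le R\}$ are weakly compact in $\Xc$ (relative compactness for the vague topology plus uniform integrability of the mass and of the $x$-moment), and $\sup_\veps\Pb(\mu^\veps_t\notin\Kc_R)\to0$ as $R\to\infty$ uniformly in $t\in[0,T]$.

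Next I would verify (ii) using the semimartingale decomposition of Proposition \ref{cor:rescaled_process}: $\langle\mu^\veps_t,\vphi\rangle=\langle\mu^\veps_{\rm in},\vphi\rangle+\Vc^{\veps,\vphi}_t+\Oc^{\veps,\vphi}_t$. For the finite-variation part, for $\vphi\in\Cc^1_c(\Rb_+)$ (or $\Cc^1_b$), the increments obey $|\Vc^{\veps,\vphi}_t-\Vc^{\veps,\vphi}_s|\le C_\vphi\int_s^t(1+u^\veps(r))(1+\langle\mu^\veps_r,\indic{}\rangle+\langle\mu^\veps_r,\Id\rangle)\,dr$, where $\Delta_{\pm\veps}(\vphi)$ is bounded by $\|\vphi'\|_\infty$ and the growth bounds \eqref{H3}-\eqref{H4} on $a^\veps,b^\veps$ are used together with $\langle\mu^\veps_r,(1+x)a^\veps(x)\rangle\le K_a(\langle\mu^\veps_r,\indic{}\rangle+\langle\mu^\veps_r,\Id\rangle)$; by Proposition \ref{prop:moment} this integrand is $L^1$-bounded uniformly in $\veps$, so the Aldous condition for $\Vc^{\veps,\vphi}$ holds (the modulus of continuity over an interval of length $\eta$ is $O(\eta)$ in expectation, uniformly in $\veps$ and in stopping times). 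For the martingale part, I would show $\Eb[\langle\Oc^{\veps,\vphi}\rangle_T]\to0$: from \eqref{eq:martingale_rescale}, the nucleation term carries a prefactor $\veps$, while the aggregation and fragmentation terms carry $\veps^2(\Delta_{\pm\veps}\vphi)^2\le\veps^2\|\vphi'\|_\infty^2$ against $a^\veps u^\veps\mu^\veps$ and $b^\veps\mu^\veps$, whose time-integrals are uniformly $L^1$-bounded by Proposition \ref{prop:moment}; hence $\langle\Oc^{\veps,\vphi}\rangle_T=O(\veps)$, which gives tightness of $\{\Oc^{\veps,\vphi}\}$ (in fact convergence to $0$) via Doob's inequality and the Aldous-Rebolledo criterion for the bracket. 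Tightness of $\{u^\veps\}$ in $\Dc(\Rb_+,\Rb_+)$ follows from $u^\veps(t)=m^\veps-\langle\mu^\veps_t,\Id\rangle$, the convergence $m^\veps\to m$, and tightness of $\langle\mu^\veps_\cdot,\Id\rangle$ established as above with $\vphi=\Id$ (using $\Phi_1$ to handle the non-compact support of $\Id$, i.e. approximating $\Id$ by compactly supported $\Cc^1$ functions with error controlled by the $\Phi_1$-moment).

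Finally, for the continuity of accumulation points, I would note that the jumps of $\langle\mu^\veps_\cdot,\vphi\rangle$ are of size at most $C\veps(\|\vphi\|_\infty+\|\vphi'\|_\infty)$ (each reaction moves a mass $\veps$ of a point by at most $\veps$, or creates/destroys one atom of mass $\veps$ at $2\veps$), so $\sup_{t}|\langle\mu^\veps_t,\vphi\rangle-\langle\mu^\veps_{t^-},\vphi\rangle|\to0$; since the weak topology on $\Xc$ is metrized using countably many such $\vphi$, the maximal jump of $\mu^\veps$ in $\Dc(\Rb_+,w-\Xc)$ tends to $0$, and any limit point a.s. has no jumps, i.e. lies in $\Cc(\Rb_+,w-\Xc)$. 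I expect the main obstacle to be the careful bookkeeping that the weak topology $w-\Xc$ — which includes control of the first moment $\langle\nu,\Id\rangle$, not just the vague topology — is genuinely captured by the chosen test-function class and that the compact-containment sets $\Kc_R$ are indeed $w$-compact; this is where the superlinear moments $\Phi_1$ (for uniform integrability of $x\,\mu^\veps(dx)$ at infinity) and the a.s. bound on $\langle\mu^\veps_t,\Id\rangle$ must be combined correctly, and where one must be careful that "tightness of finite-dimensional projections plus compact containment" really yields tightness in the Skorohod space for this non-locally-compact state space (appealing to the Jakubowski/Kurtz criterion for Polish-space-valued càdlàg processes).
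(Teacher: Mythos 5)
Your proposal is correct and rests on exactly the same ingredients as the paper's proof: the compact containment sets built from the a priori bounds on $\lb \nu,\indic{}\rb$, $\lb \nu,\Id\rb$ and the superlinear moments $\Phi_1$, $\Phi_2$ (Propositions \ref{prop:moment}, \ref{prop:moment_phi}, \ref{prop_mu_1} and Lemma \ref{lem:compactX}), the semimartingale decomposition of Proposition \ref{cor:rescaled_process} with the bracket of order $\veps$, and the observation that each jump has size $O(\veps)$ so that limit points are continuous. The one genuine methodological difference is how the weak topology on $\Xc$ is handled. You verify tightness of the real-valued projections $\lb\mu^\veps_\cdot,\vphi\rb$ for a point-separating family and then lift to $\Dc(\Rb_+,w-\Xc)$ via a Jakubowski-type criterion, treating the unbounded weight $\Id$ by truncation against the $\Phi_1$-moment. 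The paper instead metrizes $(\Xc,w)$ explicitly through the distance \eqref{alternate_metric_on_X}, built on the measures $(\indic{}+\Id)\cdot\nu$ tested against a countable family $\{\vphi_k\}$ with $\lVert\vphi_k\rVert_\infty+\lVert\vphi_k'\rVert_\infty\le1$, and applies the Aldous criterion directly to the metric-space-valued process; the first moment is thus absorbed into the metric rather than handled as a separate projection, and the same $R$-truncation with $\Phi_1$ appears inside Lemma \ref{lem:equic_weak}. Both routes are standard and buy essentially the same thing; yours externalizes the topological lifting to Jakubowski's theorem (which you rightly flag as the point needing care), while the paper's keeps everything at the level of a single metric and avoids invoking that criterion. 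Two small points of precision: the compact containment must be stated pathwise, $\Pb(\mu^\veps_t\in\Kc_R\ \forall t\in[0,T])\ge1-\eta$, not merely pointwise in $t$ — this is immediate from the $\Eb[\sup_{t\le T}\cdot]$ form of the moment bounds, as in Lemma \ref{lem:compactcontain} — and the Aldous condition should be checked over stopping times, which your pathwise increment bound $|\Vc^{\veps,\vphi}_t-\Vc^{\veps,\vphi}_s|\le C(\omega)|t-s|$ with $\Eb[C]<\infty$ does deliver.
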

The proof of this result rests on the Aldous criterion for tightness \cite[p 176]{Billingsley99}. It is in two parts, first the compact containment condition and then the equicontinuity (in the sense of c\`adl\`ag). For the former we need to make explicit a weakly compact of $\Xc$.

\begin{lemma}\label{lem:compactcontain}
 Under the same assumptions as Proposition \ref{prop:tight_weak}, for all $T>0$ and $\eta$ sufficiently small, there exists a compact $\Kc_{\eta,T}$ of $(\Xc,w)$ such that
 
 \[ \Pb\left( \left\{ \mu^\veps_t \in \Kc_{\eta,T} \, : \, 0\leq t  \leq T \right\}\right) \geq 1 -\eta \,. \]
\end{lemma}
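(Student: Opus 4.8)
The plan is to identify explicitly a weakly compact subset of $(\Xc,w)$ and then use the moment estimates of Propositions \ref{prop:moment}, \ref{prop:moment_phi} and \ref{prop_mu_1} together with Markov's inequality to show that, with probability at least $1-\eta$, the whole trajectory $(\mu^\veps_t)_{0\le t\le T}$ stays in this set. First I would recall (or establish) a characterisation of relative weak compactness in $\Xc$: a subset $\Kc\subset\Xc$ is relatively compact for the topology $w$ described in \ref{app:space_X} if and only if the total masses $\lb\nu,\indic{}\rb$ are uniformly bounded, the first moments $\lb\nu,\Id\rb$ are uniformly bounded, there is no loss of mass at infinity (a uniform superlinear moment bound $\sup_{\nu\in\Kc}\lb\nu,\Phi_1\rb<\infty$ with $\Phi_1\in\Uc_{1,\infty}$ forces tightness of the measures on $\Rb_+$ away from $\infty$), and — because the topology $w$ also controls the behaviour at the boundary $x=0$ — the superlinear control of the mass itself, $\sup_{\nu\in\Kc}\Phi_2(\lb\nu,\indic{}\rb)<\infty$ with $\Phi_2\in\Uc_{1,\infty}$, preventing escape of mass to the point $0$. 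This is exactly the combination of quantities controlled by the three moment propositions.

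Next I would fix $T>0$ and $\eta>0$ and, using the functions $\Phi_1,\Phi_2\in\Uc_{1,\infty}$ furnished by Propositions \ref{prop:moment_phi} and \ref{prop_mu_1}, define
\[
 \Kc_{\eta,T} := \Big\{ \nu\in\Xc \,:\, \lb\nu,\Id\rb \le R_1,\ \lb\nu,\Phi_1\rb \le R_2,\ \Phi_2(\lb\nu,\indic{}\rb)\le R_3 \Big\},
\]
where the constants $R_1,R_2,R_3$ are chosen large enough depending on $\eta$. By the compactness characterisation above, $\Kc_{\eta,T}$ is a (weakly) compact subset of $(\Xc,w)$ — here I would use that $\lb\nu,\Id\rb\le R_1$ already bounds $\lb\nu,\indic{}\rb$ on the relevant set, or simply add a bound on $\lb\nu,\indic{}\rb$ explicitly if needed for cleanliness. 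Then I estimate
\[
 \Pb\Big( \exists\, t\in[0,T]\,:\, \mu^\veps_t\notin\Kc_{\eta,T}\Big) \le \Pb\Big(\sup_{t\le T}\lb\mu^\veps_t,\Id\rb > R_1\Big) + \Pb\Big(\sup_{t\le T}\lb\mu^\veps_t,\Phi_1\rb > R_2\Big) + \Pb\Big(\sup_{t\le T}\Phi_2(\lb\mu^\veps_t,\indic{}\rb) > R_3\Big).
\]
The first term is zero for $R_1$ larger than the almost-sure bound in \eqref{eq:mu,Id}; the second and third are bounded by $R_2^{-1}\sup_\veps\esp{\sup_{t\le T}\lb\mu^\veps_t,\Phi_1\rb}$ and $R_3^{-1}\sup_\veps\esp{\sup_{t\le T}\Phi_2(\lb\mu^\veps_t,\indic{}\rb)}$ respectively, by Markov's inequality. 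Choosing $R_2,R_3$ large enough (using the uniform-in-$\veps$ finiteness of those suprema of expectations) makes the sum at most $\eta$, which gives the claim.

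The main obstacle, and the only genuinely non-routine point, is the compactness characterisation in the topology $w$ on $\Xc$ — in particular verifying that the quantities controlled by the moment propositions are precisely the ones needed to guarantee relative compactness in $w$, and that no mass escapes either to $x=+\infty$ (handled by $\Phi_1$) or to the boundary $x=0$ (handled by $\Phi_2$ acting on the total mass, since in the $w$-topology sequences of Dirac masses $\veps\delta_{\veps k}$ with bounded total mass can concentrate at $0$ and must be excluded by a uniform superlinear bound on $\lb\nu,\indic{}\rb$). Everything else — Markov's inequality and the choice of constants — is immediate. I would therefore organise the proof as: (i) state the $w$-compactness criterion for $\Xc$ (citing \ref{app:space_X} or the De La Vall\'ee-Poussin-type argument already invoked), (ii) define $\Kc_{\eta,T}$, (iii) apply Markov together with Propositions \ref{prop:moment}--\ref{prop_mu_1}.
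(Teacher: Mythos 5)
Your proof follows essentially the same route as the paper: define $\Kc_{\eta,T}$ by uniform bounds on the total mass, the first moment and the superlinear moment $\lb\nu,\Phi_1\rb$, invoke the weak-compactness criterion of Lemma \ref{lem:compactX}, and conclude by Markov's inequality together with Propositions \ref{prop:moment} and \ref{prop:moment_phi}. Two small inaccuracies are worth flagging, though neither breaks the argument as you finally organise it. First, your parenthetical claim that $\lb\nu,\Id\rb\le R_1$ ``already bounds $\lb\nu,\indic{}\rb$'' is false: measures such as $n\delta_{1/n}$ (or, in the present setting, $\mu^\veps_t$ supported near $2\veps$) have bounded first moment but unbounded total mass, which is exactly why the paper imposes the bound $\lb\nu,\indic{}\rb\le C^3_{\eta,T}$ as a separate constraint; you do include such a constraint (via $\Phi_2$), so your set is genuinely compact. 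Second, your heuristic that the superlinear control $\Phi_2(\lb\nu,\indic{}\rb)$ prevents ``escape of mass to the boundary $x=0$'' is a misreading of the topology: $0$ belongs to the closed state space $\Rb_+$ and concentration of mass at $0$ is perfectly admissible in $(\Xc,w)$ (indeed it is the whole point of the boundary analysis). The role of the total-mass bound is simply the standard requirement for weak relative compactness in $\Mc_b(\Rb_+)$; the superlinear function $\Phi_2$ of Proposition \ref{prop_mu_1} is not needed here (a plain bound on $\lb\nu,\indic{}\rb$ from \eqref{eq:mu,1} suffices, as in the paper) and is reserved for the tightness of the occupation measures in Proposition \ref{prop:tightbord_measure}.
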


\begin{proof}
Let $\eta \in (0,1)$ and $T>0$. Thanks to Propositions \ref{prop:moment} and \ref{prop:moment_phi}, we define three constants 
 
 \begin{equation*}
  C^1_{\eta,T}= 3 \sup_\veps \esp{\sup_{t\in(0,T)} \lb \mu_t^\veps ,\Phi_1 \rb}/ \eta, \ 
  C^2_{\eta,T}= 3 \sup_\veps \esp{\sup_{t\in(0,T)} \lb \mu_t^\veps , \Id \rb}/ \eta,
 \end{equation*}
 and,
 \begin{equation*}
   C^3_{\eta,T}= 3 \sup_\veps \esp{\sup_{t\in(0,T)} \lb \mu_t^\veps ,\indic{} \rb}/ \eta. \ 
 \end{equation*}
 We then introduce the weakly relatively compact set of  $\Xc$, by Lemma \ref{lem:compactX} in Appendix,
 \[ \Kc_{\eta,T} = \Big\{ \nu \in \Xc :  \lb \nu ,\Phi_1 \rb \leq C^1_{\eta,T}, \ \lb \nu ,\Id \rb \leq C^2_{\eta,T},\  \lb \nu ,\indic{} \rb \leq C^3_{\eta,T} \Big\}\,.   \]
 We have, by Markov's inequality, that
 \begin{equation*} \Pb\left( \left\{\sup_{t\in(0,T)} \lb \mu^\veps_t , \Phi_1 \rb \geq C^1_{\eta,T} \right\} \cup \left\{\sup_{t\in(0,T)} \lb \mu^\veps_t , \Id \rb \geq C^2_{\eta,T} \right\} \cup \left\{\sup_{t\in(0,T)} \lb \mu^\veps_t , \indic{} \rb \geq C^3_{\eta,T} \right\}\right) \leq \eta \,,
 \end{equation*}
 providing the desired compact containment condition. 
 \end{proof}
 The second step of the proof gives the equicontinuity property of the process on the c\`adl\`ag space. To that we introduce a metric $d_\Xc$ equivalent to the weak convergence on $\Xc$. We follow \cite{Fournier2004} to define a sequence $\{\vphi_k\}$ of functions in  $\Cc_b^1(\Rb_+)$  such that $\lVert \vphi_k \rVert_\infty +  \lVert \vphi_k' \rVert_\infty \leq 1$ and for all $(\nu_1,\nu_2)\in \Xc\times \Xc$,
 \begin{equation} \label{alternate_metric_on_X}
 d_\Xc(\nu_1,\nu_2) = \sum_{k\geq 1} 2^{-k} \lvert \lb (\indic{}+\Id ) \cdot \nu_1 , \vphi_k \rb - \lb (\indic{}+\Id ) \cdot \nu_2 , \vphi_k \rb \rvert\,. 
 \end{equation}
 
\begin{lemma}\label{lem:equic_weak}
 Under the same assumptions as Proposition \ref{prop:tight_weak}, for all $\eta>0$, there exists $h>0$ such that 
  
  \[ \sup_ \veps \sup_t \sup_{s \in (0,h) } \esp{  d_\Xc ( \mu^\veps_{t+s} ,  \mu^\veps_{t} ) } \leq \eta \,. \]

\end{lemma}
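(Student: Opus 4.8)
The plan is to estimate $\esp{d_\Xc(\mu^\veps_{t+s},\mu^\veps_t)}$ by reducing, through the definition \eqref{alternate_metric_on_X}, to the increments $\lb (\indic{}+\Id)\cdot\mu^\veps_{t+s},\vphi_k\rb - \lb(\indic{}+\Id)\cdot\mu^\veps_t,\vphi_k\rb$. Writing $\phi := (\indic{}+\Id)\vphi_k$, this is a locally bounded measurable test function, and we may apply Proposition \ref{cor:rescaled_process} between times $t$ and $t+s$: the increment equals $\Vc^{\veps,\phi}_{t+s} - \Vc^{\veps,\phi}_t + \Oc^{\veps,\phi}_{t+s} - \Oc^{\veps,\phi}_t$. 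Because $\|\vphi_k\|_\infty + \|\vphi_k'\|_\infty \le 1$, we have $|\phi(x)| \le 1+x$, $|\Delta_{\pm\veps}(\phi)(x)| \le C(1+x)$ uniformly in $k$ (here one uses $\Delta_{\pm\veps}(\phi)(x) = \Delta_{\pm\veps}(\vphi_k)(x)\cdot(\indic{}+\Id)(x\pm\veps) + \vphi_k(x)\Delta_{\pm\veps}(\indic{}+\Id)(x)$, bounded by a constant times $(1+x)$ since $\Delta$ of $\Id$ is $1$), and $|\phi(2\veps)| \le 1+2\veps \le 3$.

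First I would bound the finite-variation part. From \eqref{eq:varfinie_rescale}, using \eqref{eq:k_c_less_Kk}, \eqref{eq:C_uni_bound}, and \eqref{H3}--\eqref{H4},
\[
\esp{\abs{\Vc^{\veps,\phi}_{t+s}-\Vc^{\veps,\phi}_t}} \le C\int_t^{t+s}\Big( 1 + \esp{\lb\mu^\veps_\sigma,\indic{}+\Id\rb} + \esp{\beta^\veps\mu^\veps_\sigma(\{2\veps\})} \Big)\,d\sigma \le C' s\,,
\]
where the constant $C'$ is uniform in $\veps$, $k$, $t$ thanks to the moment bounds \eqref{eq:mu,1}, \eqref{eq:mu,Id} of Proposition \ref{prop:moment} (and $\mu^\veps_\sigma(\{2\veps\}) \le \lb\mu^\veps_\sigma,\indic{}\rb$). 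Next I would bound the martingale part. By the Burkholder--Davis--Gundy (or simply Doob/Cauchy--Schwarz via the isometry for the $L^2$-martingale $\Oc^{\veps,\phi}$),
\[
\esp{\abs{\Oc^{\veps,\phi}_{t+s}-\Oc^{\veps,\phi}_t}} \le \esp{\big(\langle\Oc^{\veps,\phi}\rangle_{t+s}-\langle\Oc^{\veps,\phi}\rangle_t\big)^{1/2}} \le \esp{\langle\Oc^{\veps,\phi}\rangle_{t+s}-\langle\Oc^{\veps,\phi}\rangle_t}^{1/2}.
\]
From \eqref{eq:martingale_rescale}, the increment of the quadratic variation carries a prefactor $\veps$ (nucleation term) or $\veps^2$ (aggregation/fragmentation terms), so with the same uniform moment estimates it is $\le C\veps\, s + C\veps^2 s \le C\veps s \le C\veps$ on $[0,T]$; hence $\esp{\abs{\Oc^{\veps,\phi}_{t+s}-\Oc^{\veps,\phi}_t}} \le C\sqrt{\veps s}$, which is $\le C\sqrt{h}$ and even vanishes as $\veps\to0$. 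Combining and summing the geometric series $\sum_k 2^{-k}=1$ gives $\esp{d_\Xc(\mu^\veps_{t+s},\mu^\veps_t)} \le C'(s + \sqrt{\veps s}) \le C'(h+\sqrt h)$, uniformly in $\veps$ and $t$; choosing $h$ small enough makes this $\le\eta$.

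The one point requiring genuine care — the main obstacle — is verifying that all the constants above are truly uniform in the test-function index $k$: this hinges on the uniform bounds $\|\vphi_k\|_\infty+\|\vphi_k'\|_\infty\le1$ propagating to $\phi=(\indic{}+\Id)\vphi_k$ and its discrete gradients with a $k$-independent linear-growth control, which is exactly why the metric \eqref{alternate_metric_on_X} was built with the weight $(\indic{}+\Id)$ and the normalization on $\vphi_k$. A secondary technical point is that $\phi$ is only locally bounded (linear growth), so one must check that Proposition \ref{cor:rescaled_process} indeed applies — it does, as stated there for locally bounded measurable $\vphi$ — and that all the expectations written are finite, which follows from Propositions \ref{prop:moment} and \ref{prop:moment_phi}. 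Everything else is a routine application of the martingale decomposition together with the already-established uniform moment estimates.
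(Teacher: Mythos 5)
Your overall strategy (decompose the increment via Proposition \ref{cor:rescaled_process} into finite-variation plus martingale parts, then invoke the uniform moment estimates) is the same as the paper's, but there is a genuine gap in the key estimate, and it is precisely the point the paper's proof is designed to circumvent. You correctly note that $\abs{\Delta_{\pm\veps}(\phi)(x)}\leq C(1+x)$ for $\phi=(\indic{}+\Id)\vphi_k$, but then the integrand of the finite-variation part is $\Delta_\veps(\phi)(x)\,a^\veps(x)\,u^\veps(s)$, and by \eqref{H3} this is only bounded by $C(1+x)\cdot K_a(1+x)=C'(1+x)^2$. Your claimed bound
\[
\esp{\abs{\Vc^{\veps,\phi}_{t+s}-\Vc^{\veps,\phi}_t}}\leq C\int_t^{t+s}\Big(1+\esp{\lb\mu^\veps_\sigma,\indic{}+\Id\rb}+\cdots\Big)\,d\sigma
\]
therefore silently drops a factor of $(1+x)$: what is really needed is a uniform control of $\esp{\sup_t\lb\mu^\veps_t,\Id^2\rb}$, i.e.\ a second $x$-moment. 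The paper never establishes such a bound — Propositions \ref{prop:moment} and \ref{prop:moment_phi} only give the zeroth and first moments plus a \emph{slightly} superlinear moment $\Phi_1$ (think $x^{1+\eta}$ with small $\eta$), and no more can be extracted from the hypotheses on the initial data. The same difficulty appears, worse, in the quadratic variation, where $(\Delta_\veps(\phi))^2 a^\veps(x)$ grows like $(1+x)^3$; there the prefactor $\veps^{2}$ combined with the compact support of $\mu^\veps$ in $[2\veps,m^\veps/\veps]$ could rescue you, but you would have to say so, and it does nothing for the finite-variation part, which carries no power of $\veps$.

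This is exactly why the paper truncates: it replaces $\psi_k(x)=(1+x)\vphi_k(x)$ by $\psi_{k,R}$, equal to $(1+R)\vphi_k(x)$ for $x\geq R$, so that $\abs{\Delta_{\pm\veps}(\psi_{k,R})}\leq 2+R$ is \emph{bounded}; the product with $a^\veps(x)\leq K_a(1+x)$ then has only linear growth and is controlled by the first moment, uniformly in $k$ and $\veps$. The price is the tail term $\sup_t\abs{\lb\mu^\veps_t,(\Id-R)\vphi_k\indic{[R,+\infty)}\rb}$, which is made small (uniformly in $\veps$, $t$, $k$) by choosing $R$ large and using the superlinear moment $\Phi_1$ of Proposition \ref{prop:moment_phi} via $\sup_{x>R}x/\Phi_1(x)\to0$. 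To repair your argument you must insert this truncation-plus-tail step (or prove a uniform second-moment bound, which the stated hypotheses do not permit); the rest of your proposal — the BDG/Cauchy--Schwarz treatment of the martingale, the uniformity in $k$ coming from $\norm{\vphi_k}_\infty+\norm{\vphi_k'}_\infty\leq1$, and the summation of the series in $k$ — is sound.
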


\begin{proof}
Let $\{\vphi_k\}$ be as described above. For any $k\in \Nb$, we define $\psi_k(x)=(1+x) \vphi_k(x)$ and then the approximation $\psi_{k,R} (x) = (1+x) \vphi_k(x)$ if $x<R$ and $\psi_{k,R} (x) = (1+R) \vphi_k(x)$ otherwise. Then, we get for all $h <T$ and $t\in [0,T-h]$ with  $s \in(0,h)$, 
\begin{multline} \label{eq:estim_0}
 \lvert \lb (\indic{}+\Id ) \cdot \mu^\veps_{t+s},\vphi_k \rb - \lb (\indic{}+\Id ) \cdot \mu^\veps_t,\vphi_k \rb \rvert \\
 \leq  \lvert \lb \mu^\veps_{t+s},\psi_{k,R} \rb - \lb \mu^\veps_t,\psi_{k,R} \rb \rvert 
+ 2 \sup_{t\in[0,T]} \lvert \lb  \mu^\veps_t, (\Id -R) \vphi_k \indic{[R,+\infty)} \rb \rvert\, .
\end{multline}
For the second term in Equation \eqref{eq:estim_0}, since $\lVert \vphi_k \rVert_\infty +  \lVert \vphi_k' \rVert_\infty \leq 1$, we obtain
\begin{equation*}
 \sup_{\veps} \esp{ \sup_{t\in[0,T]} \lvert \lb  \mu^\veps_t,  (\Id -R) \vphi_k \indic{[R,+\infty)}  \rb \rvert }
 \leq \left(\sup_{x>R} \frac{x}{\Phi_1(x)} \right)   \sup_{\veps} \esp{ \sup_{t\in[0,T]}  \lb \mu^\veps_t, \Phi_1 \rb }.
 \end{equation*}
By Proposition \ref{prop:moment_phi} and since $\Phi_1$ belongs to $\Uc_\infty$, for $\eta>0$ it exists $R$ large enough such that 
\begin{equation} \label{eq:estim_1}
  \sup_{\veps} \esp{ \sup_{t\in[0,T]} \lvert \lb  \mu^\veps_t, (\Id -R) \vphi_k \indic{[R,+\infty)}  \rb \rvert } \leq \frac \eta 4 \, .
\end{equation}
For the first term in Equation \eqref{eq:estim_0}, we have with the notations of Corollary \ref{cor:rescaled_process},
\begin{equation*}
 \lvert \lb \mu^\veps_{t+s},\psi_{k,R} \rb - \lb \mu^\veps_t,\psi_{k,R} \rb \rvert 
  \leq \lvert \Vc^{\veps,\psi_{k,R}}_{t+s}-\Vc^{\veps,\psi_{k,R}}_t  \rvert  + \sup_{s\in(0,h)} \lvert \Oc^{\veps,\psi_{k,R}}_{t+s}-\Oc^{\veps,\psi_{k,R}}_t  \rvert \,.
\end{equation*}

Then, taking expectation, applying the Cauchy-Schwarz inequality in the martingale term and then the Burkholder-Davis-Gundy inequality \cite[Theorem 48, p. 193]{Protter}, both on the martingale term, we get
\begin{equation*}
 \sup_{s \in (0,h) } \esp{ \lvert \lb \mu^\veps_{t+s},\psi_{k,R} \rb - \lb \mu^\veps_t,\psi_{k,R} \rb \rvert }
 \leq \sup_{s \in (0,h) } \esp{ \left\lvert \Vc^{\veps,\psi_{k,R}}_{t+s}-\Vc^{\veps,\psi_{k,R}}_t  \right\rvert } + \esp{ \left| \lb \Oc^{\veps,\psi_{k,R}} \rb_{t+h} - \lb \Oc^{\veps,\psi_{k,R}} \rb_t \right| } \,.
\end{equation*}
The two terms above can now be treated thanks to moment estimates and Assumption \ref{hyp:cv_fct}. By the convergence of $\beta^\veps$ and the moment estimate \eqref{eq:mu,1}, it exists a positive constant still denoted by $K_0$ such that 
\begin{equation}\label{eq:b2_mu,2_bound}
\sup_{\veps>0} \esp{\sup_{t\in[0,T]} \beta^\veps  \mu^\veps_t(\{2\veps\})} \leq K_0 \,.
\end{equation}

Hence, since $\lVert \vphi_k \rVert_\infty +  \lVert \vphi_k' \rVert_\infty \leq 1$, by \eqref{eq:k_c_less_Kk} and \eqref{eq:b2_mu,2_bound}, and by the hypotheses (\ref{H3}-\ref{H4}) on $a$ and $b$, we have for all $s\in(0,h)$
\begin{equation*} 
  \Abs{ \Vc^{\veps,\psi_{k,R}}_{t+s}-\Vc^{\veps,\psi_{k,R}}_t } \leq 2K_0  (1+R) h 
 + \left( K_a \sup_{t\in[0,T]} u^\veps_t + K_b   \right)  \Big{[} \sup_{t\in[0,T]}\brak{\mu_t^\veps}{\indic{}}+\sup_{t\in[0,T]}\brak{\mu_t^\veps}{\mathrm{Id}}  \Big{]} (2+R) h \,,
\end{equation*}
and,
\begin{multline*} 
 \Abs{ \lb \Oc^{\veps,\psi_{k,R}} \rb_{t+h}- \lb \Oc^{\veps,\psi_{k,R}}\rb_t } \leq \veps 2 K_0 (1+R)^2 h \\
 + \veps^{2}\left(K_a \sup_{t\in[0,T]} u^\veps_t + K_b \right)  \Big{[} \sup_{[0,T]}\brak{\mu_t^\veps}{\indic{}}+\sup_{[0,T]}\brak{\mu_t^\veps}{\mathrm{Id}}  \Big{]} (2+R)^2 h \,.
\end{multline*}
Thus, using estimates in Proposition \ref{prop:moment}, we deduce that it exists $h$ (small enough) such that: 

\begin{equation} \label{eq:estim_2}
 \sup_{s \in (0,h) } \esp{ \lvert \lb \mu^\veps_{t+s},\psi_{k,R} \rb - \lb \mu^\veps_t,\psi_{k,R} \rb \rvert } \leq \frac \eta 2 \,,
\end{equation}
where the estimation is uniform in $\veps$ and $t$. We conclude by combining \eqref{eq:estim_1} and \eqref{eq:estim_2} into \eqref{eq:estim_0}.

 \end{proof}
Finally, we finish by the continuity of the limit process.

\begin{lemma}\label{lem:continu}
  Let the same assumptions as Proposition \ref{prop:tight_weak} hold. Let $\mu$ be an accumulation point of $\{\mu^\veps\}$ in $\Pc(\Dc(\Rb_+,w-\Xc))$. Then,  $\mu$ \as~belongs to $\Cc(\Rb_+,w-\Xc)$.
\end{lemma}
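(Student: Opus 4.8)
The plan is to show that the jump sizes of $\mu^\veps$ vanish uniformly as $\veps\to 0$, so that no jump survives in the limit. Recall that the metric $d_\Xc$ of \eqref{alternate_metric_on_X} metrizes the weak topology on $\Xc$. From the stochastic equation \eqref{eq:rescaled_eds}, every jump of $\mu^\veps$ is of one of the four elementary types $\pm\veps\delta_{2\veps}$, $\veps(\delta_{\veps i+\veps}-\delta_{\veps i})$ or $\veps(\delta_{\veps i}-\delta_{\veps i-\veps})$. First I would estimate, for each such elementary increment $\Delta$ and each test function $\psi_k$ (resp.\ $\psi_{k,R}$ in the notation of Lemma~\ref{lem:equic_weak}), the quantity $|\lb (\indic{}+\Id)\cdot(\nu+\Delta),\vphi_k\rb-\lb(\indic{}+\Id)\cdot\nu,\vphi_k\rb|$. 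Since $\|\vphi_k\|_\infty+\|\vphi_k'\|_\infty\le 1$ and $(1+x)\vphi_k$ is Lipschitz on bounded sets, and since the support of each jump stays in a compact region whenever $\lb\mu^\veps_{t^-},\Id\rb$ is bounded, each such increment is $O(\veps)$ uniformly — for instance $|\veps\psi_k(x+\veps)-\veps\psi_k(x)+\ldots|\le C\veps^2$ on the bounded part and the contributions of far-away mass are controlled by the superlinear moment $\Phi_1$ exactly as in \eqref{eq:estim_1}. Summing over $k$ with the weights $2^{-k}$ gives that the jump size $\sup_{t\le T} d_\Xc(\mu^\veps_t,\mu^\veps_{t^-})$ is bounded, in expectation or in probability, by a quantity tending to $0$ as $\veps\to 0$ (using Proposition~\ref{prop:moment} and Proposition~\ref{prop:moment_phi} to control the truncation level $R$ uniformly).

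Then I would invoke the standard fact (see \cite[Theorem 10.2, Chapter 3]{EthierKurtz}) that if $X^\veps\Rightarrow X$ in $\Dc(\Rb_+,E)$ and the maximal jump $\sup_{t\le T} d_E(X^\veps_t,X^\veps_{t^-})$ converges to $0$ in probability for every $T>0$, then the limit process $X$ has almost surely continuous paths, i.e.\ $X\in\Cc(\Rb_+,E)$. Applying this with $E=(\Xc,w)$, $d_E=d_\Xc$ and $X^\veps=\mu^\veps$ (along the convergent subsequence whose limit is $\mu$) yields that $\mu$ belongs almost surely to $\Cc(\Rb_+,w-\Xc)$, which is the claim.

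The main obstacle is the non-uniform-boundedness of the state space: a single jump moves an amount of mass $\veps$ to a size $\veps(i\pm 1)$ that can be arbitrarily large, so the naive bound on $\lb(\indic{}+\Id)\cdot\Delta,\vphi_k\rb$ involves the factor $(1+\veps i)$ and is not $O(\veps)$ by itself. This is precisely why one works with the truncated functions $\psi_{k,R}$ and splits off the tail using the superlinear moment $\Phi_1$, as was already done in the proof of Lemma~\ref{lem:equic_weak}; the point is that the $\veps$-mass carried by a single jump contributes at most $\veps\,x/\Phi_1(x)$ beyond level $R$, which is small, while on $[0,R]$ the Lipschitz bound gives the genuine $O(\veps)$ decay. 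Handling this truncation uniformly in $\veps$, $t$ and $k$, together with the measurability needed to apply the Skorohod representation (or directly the jump-functional continuity argument), is the only delicate point; everything else is a repetition of estimates already established in Section~\ref{sec:technical}.
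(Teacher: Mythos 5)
Your proposal is correct and follows essentially the same route as the paper: bound the maximal jump of $\mu^\veps$ in the metric $d_\Xc$ of \eqref{alternate_metric_on_X} and invoke the standard fact that vanishing maximal jumps force the Skorohod limit to be almost surely continuous. The only difference is that the paper dispenses with your truncation/$\Phi_1$ step: since $\mu^\veps_t$ is \as\ supported in $[2\veps,m^\veps/\veps]$ with $m^\veps\le K_2$, the weight $(1+x)$ is at most $1+K_2/\veps$, and multiplying by the jump mass $\veps$ and the Lipschitz increment $\veps\lVert\vphi_k'\rVert_\infty$ already gives an \as\ deterministic $O(\veps)$ bound on each jump, uniformly in $k$ and $t$.
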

\begin{proof}
First, note that by construction since $\mu^\veps$ takes values in $\Mc_\delta^\veps$ and $\sum_{i\geq 1} \veps i\mu^\veps_t(\{\veps i\}) = \lb \mu^\veps_t ,\Id \rb \leq m^\veps$, thus for for each $\veps>0$, $\mu^\veps$ is compactly supported in $[2 \veps,m^\veps/\veps]$.
Hence, using the stochastic differential equation \eqref{eq:rescaled_eds}, we obtain for all $\vphi$  with $\lVert \vphi \rVert_\infty +  \lVert \vphi' \rVert_\infty \leq 1$ that 
\begin{multline*}
 \lvert \brak{(\indic{}+\Id)\cdot \mu_s^\veps}{\vphi} - \brak{(\indic{}+\Id)\cdot \mu_{s^-}^\veps}{\vphi} \rvert \\
 \leq \veps \Big{(} \lvert (1+2\veps) \vphi(2\veps)\rvert + \sup_{x \in (2 \veps,m^\veps/\veps)} \lvert (1+x+\veps) \vphi(x+\veps)- (1+x)\vphi(x) \rvert \Big{)}\\
 \leq  \veps  (1+2\veps) +  \veps^{2} \left( \frac{K_2}{\veps} + 2\right)\,,
\end{multline*}
where $K_2$ is a constant such that \as~$m^\veps<K_2$, by Proposition \ref{prop:moment}.
We deduce that for all $T\geq 0$, we have \as~ $\lim_{\veps\to0} \sup_{s\in[0,T]}  d_\Xc( \mu_s^\veps, \mu_{s^-}^\veps) = 0$. This concludes the proof.

 \end{proof}

\begin{proof}[Proposition \ref{prop:tight_weak}] The tighness of $\{\mu^\veps\}$ readily follows from Lemma \ref{lem:compactcontain} and Lemma \ref{lem:equic_weak} which are the Aldous criterion of tightness given in \cite[p 176]{Billingsley99}. The continuity is a direct consequence of Lemma \ref{lem:continu}. The tightness of $\{u^\veps\}$ follows immediately from its definition and the properties obtained on $\{\mu^\veps\}$. 
\end{proof}

\subsection{Tightness of the boundary term} 

While trying to pass to the limit in Equation \eqref{eq:varfinie_rescale} in $(\Xc,w)$, we have to deal with the term $\mu_t^\veps(\{2\veps\})$, for which we need to prove also a tightness property. However, when looking at the time evolution equation of $\mu_t^\veps(\{2\veps\})$, it appears that such a term may evolve at a faster time scale than $\mu_\veps$, viewed as a measure in $\Xc$.  We use ideas from \cite{Kurtz1992}, and separate the action of $\mu^\veps$ as a measure on $\Rb_+$ (for large size) and the evaluation at small sizes (on $\veps\Nb$). But, the equation on $\mu_t^\veps(\{2\veps\})$ involves $\mu_t^\veps(\{3\veps\})$, the latter involves $\mu_t^\veps(\{4\veps\})$, {\it etc}. Thus, we need to consider together all the evaluations of the measure at points $i\veps$. That is, for all $\veps$ and all $t$, we define as in Section \ref{sec:main}, the sequence $p^{\veps}_t=(p^{\,\veps}_{n,t})_{n \in \Nb} \in \ell^1_+$ by
\begin{equation} \label{def:p}
p_{n,t}=\mu_t^\veps(\{\veps(n+2)\}) \,, \forall n \geq 0\,.
\end{equation}
%
We recall that, for each $\veps$, this sequence is by definition compactly supported between $0$ and $m^{\veps}/\veps^{2} -2$ since $\mu^\veps$ is supported in $[2\veps,m^\veps/\veps]$. For the remainder we let in $\ell_1^+$ the canonical sequences $(\indic{k})_{k \in \Nb}$ be defined for all $k$ by $\indic{k,n}=0$ for all $n\neq k$ and $\indic{k,k}=1$.

\noindent The following proposition is immediate, but makes clear the difference of time scales.

\begin{proposition}\label{prop:generateur_p}
 Let $p^\veps$ given by \eqref{def:p} for each $\veps>0$. Then, $p^\veps$ is  a $\ell_1^+$-valued c\`adl\`ag process. Its infinitesimal generator is defined, for all measurable and locally bounded $g$ from $\ell^+_1$ to $\Rb$ and compactly supported $q\in \ell^+_1$, by
\begin{multline}\label{eq:Hveps}
\Hc^{\veps} g (q)= \ds \frac{g(q+ \veps \indic{0}) - g(q)}{\veps} \alpha^\veps c(c-\veps^2) \vphantom{\sum_{i\geq k}}
 + \ds \frac{g(q- \veps \indic{0}) - g( q)}{\veps} \beta^\veps q_0 \vphantom{\sum_{i\geq k}} \\
  + \veps^{-(1-r_a)}   \sum_{n\geq0}  \frac{g(q+ \veps (\indic{n+1}-\indic{n})) - g( q)}{\veps}  \frac{a^{\veps}(\veps(n+2))}{\veps^{r_a}}   c q_n\\
 + \veps^{-(1-r_b)}  \sum_{n \geq 1} \frac{g(q- \veps (\indic{n}-\indic{n-1})) - g( q)}{\veps} \frac{b^{\veps}(\veps(n+2))}{\veps^{r_b}} q_n \vphantom{\sum_{i\geq k}}\,,
\end{multline}
where $c=m^\veps - \veps \sum_{n\geq0} (n+2) q_n$. Moreover, for such a function $g$,
\begin{equation*}
g(p^\veps_t)-g(p^\veps_{\rm in})-\int_0^t \Hc^\veps g(p^\veps_s)\, ds
\end{equation*}
is a $L^1-(\Fc_t^\veps)_{t\geq0}$  martingale.
\end{proposition}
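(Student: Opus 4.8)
The statement of Proposition \ref{prop:generateur_p} has two parts: (i) the identification of the infinitesimal generator $\Hc^\veps$ of the $\ell_1^+$-valued process $p^\veps$, and (ii) the associated Dynkin martingale property. Both are essentially bookkeeping on the already-established stochastic equation \eqref{eq:rescaled_eds}, so the proof is a direct computation rather than a deep argument; the ``hard'' part is simply getting all the $\veps$-powers right and checking integrability.

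\emph{Step 1: transcribe the jumps of $p^\veps$ from those of $\mu^\veps$.} By definition \eqref{def:p}, $p^\veps$ is obtained from $\mu^\veps$ by the (continuous, linear) evaluation map $\nu\mapsto (\nu(\{\veps(n+2)\}))_{n\ge 0}$; since $\mu^\veps$ is $\Mc_\delta^\veps$-valued and compactly supported in $[2\veps, m^\veps/\veps]$, the image sequence is compactly supported in $\{0,\dots,m^\veps/\veps^2-2\}$, so $p^\veps \in \ell_1^+$ and it is c\`adl\`ag because $\mu^\veps$ is. I would read off the four jump types directly from \eqref{eq:rescaled_eds}: nucleation adds $\veps\delta_{2\veps}$ to $\mu^\veps$, hence $\veps\indic{0}$ to $p^\veps$, at rate $\alpha^\veps u^\veps(u^\veps-\veps^2)/(\veps\overline T)$ in the $Q_1(\overline T\,ds,dy)$ clock, i.e.\ effective rate $\alpha^\veps c(c-\veps^2)/\veps$ after the time change $Q_1(\overline T ds, \cdot)$ (the $\overline T$ cancels the $1/\overline T$); de-nucleation removes $\veps\indic{0}$ at rate $\beta^\veps q_0/\veps$; an aggregation event on a cluster of size $\veps(n+2)$ (with $n\ge 0$, i.e.\ $i=n+2\ge 2$) sends $\nu \mapsto \nu + \veps\delta_{\veps(i+1)} - \veps\delta_{\veps i}$, hence $q \mapsto q + \veps(\indic{n+1}-\indic{n})$, at rate $a^\veps(\veps(n+2))\, c\, q_n/\veps^2$; a fragmentation event on a cluster of size $\veps(n+2)$ with $n\ge 1$ (i.e.\ $i=n+2\ge 3$) sends $q\mapsto q - \veps(\indic{n}-\indic{n-1})$ at rate $b^\veps(\veps(n+2))\,q_n/\veps^2$. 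Here $c = m^\veps - \lb\mu^\veps,\Id\rb = m^\veps - \veps\sum_{n\ge 0}(n+2)q_n$, which is exactly $u^\veps$.

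\emph{Step 2: assemble the generator.} For $g$ measurable and locally bounded on $\ell_1^+$ and $q$ compactly supported, the generator of a pure-jump Markov process is $\Hc^\veps g(q) = \sum_{\text{jumps}} (\text{rate})\cdot(g(q+\text{jump})-g(q))$, and plugging in the four rates and increments from Step 1 yields exactly \eqref{eq:Hveps} once one rewrites $a^\veps(\veps(n+2))/\veps^2 = \veps^{-(1-r_a)}\cdot \bigl(a^\veps(\veps(n+2))/\veps^{r_a}\bigr)/\veps$ and similarly for $b^\veps$ with $r_b$ — this is purely a regrouping of powers of $\veps$ chosen to match the scaling in Assumption \ref{hyp:coef_BD}, and contributes nothing to the content. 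The sum over $n$ is finite for compactly supported $q$, so there is no convergence issue at this stage.

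\emph{Step 3: the martingale property.} This is Dynkin's formula. Since, conditionally on $\mu^\veps_{\rm in}$ and $u^\veps_{\rm in}$, the process is a continuous-time Markov chain on a finite state space (as noted after Definition \ref{def:rescaleSBD}), $g(p^\veps_t) - g(p^\veps_{\rm in}) - \int_0^t \Hc^\veps g(p^\veps_s)\,ds$ is a local martingale with respect to $(\Fc_t^\veps)_{t\ge 0}$; to upgrade to a genuine $L^1$ martingale it suffices to bound $\Eb\int_0^t |\Hc^\veps g(p^\veps_s)|\,ds < \infty$ on every $[0,t]$. Using $|g(q\pm\text{jump})-g(q)| \le 2\sup|g|$ locally and the moment bounds of Proposition \ref{prop:moment} (the total-mass bound \eqref{eq:mu,1}, giving $\Eb\sup_{[0,t]}\sum_n q_n < \infty$, together with \eqref{eq:C_uni_bound}–\eqref{eq:mu,Id} controlling $c$ and the growth bounds \eqref{H3}–\eqref{H4} on $a^\veps,b^\veps$) bounds the total jump rate in $L^1$, which gives the claim — the linear growth $a^\veps(x)\le K_a(1+x)$, $b^\veps(x)\le K_b(1+x)$ is exactly what makes $\sum_n a^\veps(\veps(n+2))q_n$ and $\sum_n b^\veps(\veps(n+2))q_n$ controllable by $\lb\mu^\veps,\indic{}\rb$ and $\lb\mu^\veps,\Id\rb$. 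The only mild subtlety — and the closest thing to an obstacle — is that the rate prefactors $\veps^{-(1-r_a)}$, $\veps^{-(1-r_b)}$ blow up as $\veps\to 0$; but for \emph{fixed} $\veps>0$ they are harmless constants, and the whole proposition is stated for fixed $\veps$, so no uniformity is needed here. I would therefore simply remark that the result is ``immediate'' (as the paper itself says) and present Steps 1–3 compactly.
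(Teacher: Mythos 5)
Your proposal is correct and follows exactly the route the paper intends (the paper states this proposition as ``immediate'' and gives no proof): you read off the four jump types and rates of $p^\veps$ from the stochastic equation \eqref{eq:rescaled_eds} for $\mu^\veps$, regroup the powers of $\veps$ as $a^\veps(\veps(n+2))/\veps^{2}=\veps^{-(1-r_a)}\,\veps^{-1}\,a^\veps(\veps(n+2))/\veps^{r_a}$ (and likewise with $r_b$) to match \eqref{eq:Hveps}, and obtain the $L^1$-martingale from Dynkin's formula using the a.s.\ compact support of $p^\veps_t$ and the moment bounds of Proposition \ref{prop:moment}. Nothing is missing; your observation that the diverging prefactors $\veps^{-(1-r_a)}$, $\veps^{-(1-r_b)}$ are harmless at fixed $\veps$ is exactly the right point to flag.
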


\begin{remark} \label{topo_l1}
 We recall the topology on $\ell_1^+$ is the vague topology $v-\ell^+_1$, or alternatively $(\ell^+_1,v)$, \ie~the topology of the convergence,   $q^\veps \to q$ in $v-\ell^+_1$ if and only if $\sum_{n\in \Nb} q_n^\veps \vphi_n \to \sum_{n\in \Nb} q_n \vphi_n$ for all $\vphi\in \ell_0$ (the sequences vanishing at infinity). Remark, the space is not the Banach space, as usual, but is a Polish space (consider for instance its canonical homeomorphism with $(\Mc_b(\Nb),v)$).
\end{remark}

\begin{remark}
The scaling exponents $r_a$ and $r_b$ in Equation \eqref{eq:Hveps} are those given by Assumption \ref{hyp:coef_BD} and ensure that both $a^{\veps}(\veps(2+n))/\veps^{r_a}$ and $b^{\veps}(\veps(2+n))/\veps^{r_b}$ stay bounded and converge to positive values as $\veps \to 0$. Hence the exponents $-(1-r_a)$ and $-(1-r_b)$ are really proper time scales of the infinitesimal generator $\Hc^{\veps}$. This implies that the main part of the generator depends on the values of $r_a$ and $r_b$. In particular, since $\min(r_a,r_b)<1$, the time scale of the infinitesimal generator $\Hc^{\veps}$ is faster than the time scale of the generator $\mathcal L^\veps$ of $\mu^\veps$.
\end{remark}
Using ideas from \cite{Kurtz1992}, we want to prove a tightness result for the sequence $\{p^\veps\}$ in a space that \emph{do not see} the fast variations of $p^\veps$. For this we use the occupation measure  $\Gamma^\veps$ defined by \eqref{def:Gamma}. The following proposition states the relative compactness of $\{\Gamma^\veps\}$. 

\begin{proposition}\label{prop:tightbord_measure}
 Let $\Gamma^\veps$ be defined by \eqref{def:Gamma} for each $\veps>0$ and assume that (only) Assumption \ref{hyp:cv_fct} holds.  Then  $\{\Gamma^\veps\}$ is tight in $\Pc(w^\#-\Yc)$. 
\end{proposition}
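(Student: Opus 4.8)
The plan is to establish tightness of $\{\Gamma^\veps\}$ in $\Pc(w^\#-\Yc)$ by exploiting the structure of the space $\Yc$ and the compact containment already obtained for $\{\mu^\veps\}$. Recall that $\Gamma^\veps(A\times B) = \int_A \indic{\{p_s^\veps \in B\}}\,ds$, so by construction $\Gamma^\veps([0,t]\times \ell_1^+) = t$ for every $t$, \emph{deterministically}. This means the ``time marginal'' of every $\Gamma^\veps$ is just Lebesgue measure, and the only randomness lies in how mass is distributed over the $\ell_1^+$-component. The weak$^\#$ topology on $\Yc$ (described in the appendix) is a relatively weak topology: it tests $\Theta$ against functions of the form $\psi(s)\phi(q)$ with $\phi$ vanishing at infinity (for the $\ell_1^+$-part in the vague sense) plus a linear growth control in $\norm{q}_{\ell_1}$. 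So the first thing I would do is recall the characterization of relative compactness in $(\Yc, w^\#)$ from the appendix: a family is relatively compact provided (i) the time marginals are uniformly controlled — here trivially, since they all equal Lebesgue measure on compacts — and (ii) there is a uniform tightness/moment bound preventing mass from escaping in the $\ell_1^+$ direction, typically of the form $\sup_\veps \Eb\big[\int_{[0,t]\times\ell_1^+}(1+\norm{q}_{\ell_1})\,\Gamma^\veps(ds\times dq)\big] < +\infty$, together with a mild uniform-integrability-in-$q$ statement so that the limit again lives in $\Yc$.

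Next I would verify these moment bounds by unfolding the definition of $\Gamma^\veps$. We have, by Fubini,
\begin{equation*}
\Eb\Big[\int_{[0,t]\times\ell_1^+}(1+\norm{q}_{\ell_1})\,\Gamma^\veps(ds\times dq)\Big] = \Eb\Big[\int_0^t (1+\norm{p_s^\veps}_{\ell_1})\,ds\Big] = \int_0^t \big(1+\Eb[\norm{p_s^\veps}_{\ell_1}]\big)\,ds\,.
\end{equation*}
Now $\norm{p_s^\veps}_{\ell_1} = \sum_{n\geq 0}\mu_s^\veps(\{\veps(n+2)\}) = \mu_s^\veps((0,\infty)) = \brak{\mu_s^\veps}{\indic{}}$, which is exactly the total mass of the measure. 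By Proposition \ref{prop:moment} (estimate \eqref{eq:mu,1}), $\sup_\veps \Eb[\sup_{s\in[0,T]}\brak{\mu_s^\veps}{\indic{}}] < +\infty$, so the right-hand side above is bounded by $t(1+C_T)$ uniformly in $\veps$. For the extra uniform-integrability needed to keep the limit inside $\Yc$ (so that mass does not concentrate on sequences of unbounded $\ell_1$-norm), I would invoke the superlinear bound of Proposition \ref{prop_mu_1}: with $\Phi_2 \in \Uc_{1,\infty}$, one has $\sup_\veps \Eb[\sup_{s\in[0,T]}\Phi_2(\brak{\mu_s^\veps}{\indic{}})] < +\infty$, i.e.\ $\sup_\veps\Eb\big[\int_{[0,t]\times\ell_1^+}\Phi_2(\norm{q}_{\ell_1})\,\Gamma^\veps(ds\times dq)\big] \leq t\,C_T'$; since $\Phi_2$ is superlinear this provides precisely the tightness-in-$q$ (no escape of mass to infinity in the $\ell_1$-direction) that the appendix criterion demands. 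One also needs tightness in the \emph{vague} sense on $\ell_1^+$, but that is automatic: the vague topology on $\ell_1^+$ (equivalently $(\Mc_b(\Nb),v)$) is such that bounded-mass sets are vaguely relatively compact, which is again covered by the $\norm{q}_{\ell_1}$-bound.

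Finally I would assemble these pieces: given $\eta > 0$ and $T > 0$, the uniform moment bound plus Markov's inequality produces a set of the form $\{\Theta \in \Yc : \Theta([0,T]\times\ell_1^+)=T,\ \int_{[0,T]\times\ell_1^+}\Phi_2(\norm{q}_{\ell_1})\,\Theta(ds\times dq) \leq C_T'/\eta\}$ which, by the appendix's compactness lemma for $(\Yc,w^\#)$, is relatively compact, and $\Pb(\Gamma^\veps \notin \text{this set}) \leq \eta$ uniformly in $\veps$. This is the desired tightness in $\Pc(w^\#-\Yc)$. The one point requiring genuine care — rather than bookkeeping — is making sure the chosen compact set in $\Yc$ is the right shape for the weak$^\#$ topology: the deterministic constraint $\Theta([0,t]\times\ell_1^+)=t$ must be combined with the correct closedness/compactness statement from \ref{app:space_Y}, and one must check that the superlinear moment in $\Phi_2$ (as opposed to merely $\norm{q}_{\ell_1}$) is what prevents loss of mass in the limit so that the limiting $\Theta$ still satisfies the defining integrability of $\Yc$. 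Everything else — Fubini, the identification $\norm{p_s^\veps}_{\ell_1}=\brak{\mu_s^\veps}{\indic{}}$, and Markov's inequality — is routine, and notably the proof uses \emph{only} Assumption \ref{hyp:cv_fct}, consistent with the statement, since Propositions \ref{prop:moment} and \ref{prop_mu_1} rest on that assumption alone.
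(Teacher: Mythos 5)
Your proposal is correct and follows essentially the same route as the paper: the identification $\norm{p_s^\veps}_{\ell_1}=\brak{\mu_s^\veps}{\indic{}}$, the uniform bounds on $1+\norm{q}_{\ell_1}+\Phi_2(\norm{q}_{\ell_1})$ from Propositions \ref{prop:moment} and \ref{prop_mu_1}, Markov's inequality, and the appendix compactness criteria are exactly the ingredients used there. The only step you leave implicit is passing from a fixed horizon $[0,T]$ to all of $\Rb_+$, which the paper handles by taking a sequence $t_k\to\infty$ with constants chosen so the exceptional probabilities sum as $\eta 2^{-k}$ and then invoking Lemma \ref{lem:compactY}.
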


\begin{proof}
 We start by proving that for all $t\geq  0$, the restriction of $\Gamma^{\veps}$ to $[0,t]\times \ell_1^+$ belongs (uniformly in $\veps$) to a compact of 
 
 \[ \Yc_t := \left\{ \Theta \in \Mc_b([0,t]\times \ell_1^+) \, : \, \forall t\geq 0,\ \int_{[0,t]\times \ell_1^+} \lVert q \rVert_{\ell_1} \, \Theta(ds\times dq) < +\infty  \right\} \, ,\]
 for the weak topology defined in \ref{app:space_X}. Indeed, by definition we have $\lVert p^\veps_t \rVert_{\ell_1}=\lb \mu_t^{\veps}, \indic{} \rb$, and we get
 \begin{equation*}
  \int_{[0,t]\times \ell_1^+} (1+\lVert q \rVert_{\ell_1} + \Phi_2(\lVert q \rVert_{\ell_1}) )\Gamma^\veps(ds\times dq)
  \leq t \left( 1+ \sup_{s\in[0,t]} \lb \mu^\veps_s,\indic{}\rb + \sup_{s\in[0,t]} \Phi_2(\lb \mu^\veps_s,\indic{}\rb ) \right)\,.
 \end{equation*}
 Thanks to Propositions \ref{prop:moment} and \ref{prop_mu_1}, we easily check that
 \[ \sup_{\veps>0}\esp{ \int_{[0,t]\times \ell_1^+} (1+\lVert q \rVert_{\ell_1} + \Phi_2(\lVert q \rVert_{\ell_1})) \Gamma^\veps(ds\times dq)} <+\infty\,.\]
 The latter yields, by Lemma \ref{lem:compactX} (in Appendix) and remarking that the bounded subsets of $(\ell_1^+,v)$ are relatively compact, that for all $t\geq 0$ the sequence  $\{\Gamma^{\veps\,, t}\}$ belongs to a weak compact set $\Kc_t$ of $(\Yc_t,w^\#)$.
 
\noindent Now, we let  a sequence $\{t_k\}$ such that $\lim_{k\to \infty} t_k = +\infty$ and we let $\eta > 0$. We construct a sequence $\{C_{k,\eta}\}$ of positive constants such that 
 \[ \sup_{\veps>0}\esp{ \int_{[0,t_k]\times \ell_1^+} (1+\lVert q \rVert_{\ell_1} + \Phi_2(\lVert q \rVert_{\ell_1})) \Gamma^\veps(ds\times dq)} \leq  \eta\,  C_{k,\eta} 2^{-k}\,.\]
 We then define the weak compact set $\Kc_{t_k,\eta}$ of  $(\Yc_t,w^\#)$ consisting of measures $\Theta\in \Mc_b([0,t_k]\times\ell_1^+) $ such that 
 \[  \int_{[0,t_k]\times \ell_1^+} (1+\lVert q \rVert_{\ell_1} + \Phi_2(\lVert q \rVert_{\ell_1})) \Theta(ds\times dq) \leq C_{k,\eta} \,.\]
 It follows by Markov inequality that
 \begin{equation*}
  \prob{\Gamma^{\veps\,, t_k} \in \Kc^c_{t_k,\eta}}
  \leq  \frac{\sup_{\veps>0}\esp{ \int_{[0,t_k]\times \ell_1^+} (1+\lVert q \rVert_{\ell_1} + \Phi_2(\lVert q \rVert_{\ell_1})) \Gamma^\veps(ds\times dq)}}{C_{k,\eta}}
  \leq \eta 2^{-k}\,.
 \end{equation*}
 Thus, letting $\Kc_\eta = \{\Theta \in \Yc(\Rb_+\times\ell_1^+)\, :\, \forall k\geq 0,\ \Theta^{t_k}\in\Kc_{t_k,\eta}\}$, we obtain that 
 \[\prob{\Gamma^\veps \in \Kc_\eta^c} \leq \sum_{k\geq 0} \prob{\Gamma^{\veps\,,t_k}\in \Kc_{t_k,\eta}^c} \leq \eta\,.\]
 As $\Kc_\eta$ defines a compact of $(\Yc,w^\#)$ for any $\eta >0$ by Lemma \ref{lem:compactY}, this proves the tightness of $\{\Gamma^\veps\}$. 
 \end{proof}

\section{Convergence and limit problem} \label{sec:proof}


At this point, we already know, thanks to Propositions \ref{prop:tight_weak} and \ref{prop:tightbord_measure}, that $\{\mu^{\veps_n}\}$ and $\{\Gamma^{\veps_n}\}$ are tight respectively in $\Pc(\Dc(\Rb_+,w-\Xc))$ and $\Pc(w^\#-\Yc)$. Thus, the couple $\{(\mu^\veps,\Gamma^\veps)\}$ is tight in $\Pc\big(\Dc(\Rb_+,w-\Xc)\times (\Yc,w^\#)\big)$ and converges in law, up to a subsequence still indexed by $\veps$, to an accumulation point $\{(\mu,\Gamma)\}$. The topology on $\Dc(\Rb_+,w-\Xc) \times (\Yc,w^\#)$ is the product topology that remains a Polish space, and the convergence in law is the classical one for probability measure.  

The aim of this section is to identify the limit problem, \ie~ to recover the main results stated at the beginning. We start by proving Theorems \ref{thm:SBD_rescale_limit} and  \ref{thm:LS_convvague}.

\subsection{Identification of the limit - Proof of Theorems \ref{thm:SBD_rescale_limit} and \ref{thm:LS_convvague}} 

This section is first devoted to the proof of Theorem \ref{thm:SBD_rescale_limit}, which identifies the equation satisfied by the limit $\mu$.  As we see, it is a Lifshitz-Slyozov equation (in the weak sense) with boundary terms depending on integrals against $\Gamma$ (``averages''). Theorem \ref{thm:LS_convvague} is in fact a particular case, and the proof will directly follow as mentioned in Section \ref{sec:main}.

The proof of Theorem \ref{thm:SBD_rescale_limit} relies on the identification of the limit through a functional, that stands for the limit model, studied along the process. Thus, for any given $\vphi \in \mathcal C_b^1(\Rb_+)$ and $t\geq 0$, we define for all $(\nu,\Theta) \in  \Dc(\Rb_+,w-\Xc) \times \Yc$, the functional
\begin{equation} \label{functional_F}
 F_t^\vphi(\nu,\Theta) = \brak{\nu_t}{\vphi}-\brak{\nu_{\rm in}}{\vphi}-D^\vphi_t (\nu) - B^\vphi_t (\nu) - \tilde B^\vphi_t (\Theta) \,,
\end{equation}
where $D^\vphi_t$ denotes the drift, $B^\vphi_t$ and $\tilde B^\vphi_t$ the boundary terms, respectively given by
\begin{equation*}
 \begin{array}{rcl}
  \ds D^\vphi_t(\nu) & = & \ds \int_0^t \Big{(} (m - \lb \nu_s,\Id\rb) \lb \nu_s, a \vphi' \rb - \lb \nu_s, b \vphi' \rb \Big{)} ds \,,\\[0.8em]
  \ds B^\vphi_t(\nu) & = & \ds  \vphi(0) \int_0^t  \alpha(m - \lb \nu_s,\Id\rb)^2 \, ds\,, \\[0.8em] 
  \ds \tilde B^\vphi_t (\Theta) & = & \ds - \vphi(0) \int_{[0,t)\times \ell^1} \beta q_{0} \Theta(ds\times dq)  \ds  + \vphi'(0)b(0) \int_{[0,t)\times \ell^1} q_{0} \Theta(ds\times dq)\,.
 \end{array}
\end{equation*}
We aim to prove that the limit $(\mu,\Gamma)$ satisfies $ \esp{ \lvert F_t^\vphi(\mu,\Gamma) \rvert } = 0 $. We start by few lemmas.

\begin{lemma} \label{lem:continuity_F}
Let the hypotheses of Theorem \ref{thm:SBD_rescale_limit} hold. Let $\mu^\veps$ be the solution  of (\ref{eq:rescaled_eds}-\ref{eq:rescaled_constraint}) and $\Gamma^\veps$ construct by \eqref{def:Gamma} for each $\veps>0$. Then, for all $\vphi \in \mathcal C_b^1(\Rb_+)$ and $t\geq 0$, up to a subsequence, we have that  $\{F_t^\vphi(\mu^\veps,\Gamma^\veps)\}$  converges to $\{F_t^\vphi(\mu,\Gamma)\}$ in $\Pc\big(\Dc(\Rb_+,w-\Xc)\times (\Yc,w^\#)\big)$. 
\end{lemma}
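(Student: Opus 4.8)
The plan is to read the claim as a continuity statement and invoke the continuous mapping theorem: I will argue that the functional $F_t^\vphi$ of \eqref{functional_F}, viewed as a map $\Dc(\Rb_+,w-\Xc)\times(\Yc,w^\#)\to\Rb$, is continuous at every point $(\nu,\Theta)$ whose first coordinate $\nu$ lies in $\Cc(\Rb_+,w-\Xc)$; since the accumulation point $\mu$ is almost surely in $\Cc(\Rb_+,w-\Xc)$ by Proposition \ref{prop:tight_weak}, the law of $(\mu,\Gamma)$ is carried by such points, so continuity there is enough. (One may equivalently pass, via the Skorokhod representation theorem, to a space on which the convergent subsequence converges almost surely and check $F_t^\vphi(\mu^\veps,\Gamma^\veps)\to F_t^\vphi(\mu,\Gamma)$ pathwise.) As $F_t^\vphi$ is a finite sum, I will treat the $\nu$-terms $\brak{\nu_t}{\vphi}$, $\brak{\nu_{\rm in}}{\vphi}$, $D_t^\vphi$, $B_t^\vphi$ and the $\Theta$-term $\tilde B_t^\vphi$ separately. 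Two structural inputs are used throughout: that a sequence converges in $(\Xc,w)$ precisely when $(\indic{}+\Id)\cdot\nu^n$ converges narrowly in $\Mc_b(\Rb_+)$ (Appendix \ref{app:space_X}), so that $\nu\mapsto\brak{\nu}{g}=\brak{(\indic{}+\Id)\cdot\nu}{g/(1+x)}$ is $(\Xc,w)$-continuous for every continuous $g$ with $|g(x)|\le C(1+x)$ — in particular for $g\in\{\vphi,\Id,a\vphi',b\vphi'\}$ by \eqref{H3}--\eqref{H4} and $\vphi\in\Cc_b^1$ — and the description of $w^\#$-convergence together with the characterization of the $w^\#$-compact subsets of $\Yc$ (Appendix \ref{app:space_Y}, Lemma \ref{lem:compactY}).

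For the $\nu$-terms, I would take $\nu^n\to\nu$ in $\Dc(\Rb_+,w-\Xc)$ with $\nu$ continuous. Skorokhod convergence to a continuous limit is uniform on every $[0,T]$, so $\sup_{s\le T}d_\Xc(\nu^n_s,\nu_s)\to 0$ and hence $\nu^n_s\to\nu_s$ in $(\Xc,w)$ for each $s$ (the endpoints $0$ and $t$ included, evaluation there being automatically continuous on $\Dc$). This already gives $\brak{\nu^n_{\rm in}}{\vphi}\to\brak{\nu_{\rm in}}{\vphi}$ and $\brak{\nu^n_t}{\vphi}\to\brak{\nu_t}{\vphi}$, and the pointwise-in-$s$ convergence of the integrands $(m-\brak{\nu^n_s}{\Id})\brak{\nu^n_s}{a\vphi'}-\brak{\nu^n_s}{b\vphi'}$ and $\alpha(m-\brak{\nu^n_s}{\Id})^2$ towards their limits. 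For the time integrals $D_t^\vphi$ and $B_t^\vphi$ I then need a dominating bound on these integrands, uniform in $n$; this is exactly where the uniform-in-$\veps$ moment estimates of Proposition \ref{prop:moment} enter, via the standard localization on the high-probability event of Lemma \ref{lem:compactcontain} on which the paths $\mu^\veps_{|[0,T]}$ stay in a fixed weakly compact subset of $(\Xc,w)$ — on which $\brak{\cdot}{\indic{}}$, $\brak{\cdot}{\Id}$, hence $\brak{\cdot}{a\vphi'}$ and $\brak{\cdot}{b\vphi'}$, are bounded — letting the probability tend to $1$ at the end. Lebesgue's dominated convergence then yields $D_t^\vphi(\nu^n)\to D_t^\vphi(\nu)$ and $B_t^\vphi(\nu^n)\to B_t^\vphi(\nu)$.

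The term $\tilde B_t^\vphi$ is the delicate one: it is the linear functional $\Theta\mapsto\int_{[0,t)\times\ell_1^+}q_0\,\Theta(ds\times dq)$ (up to fixed constants built from $\vphi(0)$, $\vphi'(0)$, $\beta$, $b(0)$), and while $q\mapsto q_0$ is continuous on $(\ell_1^+,v)$ (testing against $\indic{0}$, Remark \ref{topo_l1}) it is unbounded, and the time cut-off $\indic{[0,t)}$ jumps at $s=t$. For $\Theta^n\to\Theta$ in $(\Yc,w^\#)$ I would use a two-sided truncation. First, the convergent sequence $\{\Theta^n\}\cup\{\Theta\}$ is a compact subset of $(\Yc,w^\#)$, so by the $w^\#$-compactness criterion (Lemma \ref{lem:compactY}; equivalently the $\Phi_2$-moment bound from the proof of Proposition \ref{prop:tightbord_measure}) there is a superlinear $\Phi$ with $\sup_n\int_{[0,t]\times\ell_1^+}\Phi(\lVert q\rVert_{\ell_1})\,\Theta^n(ds\times dq)<\infty$; since $0\le q_0\le\lVert q\rVert_{\ell_1}$, replacing $q_0$ by $q_0\wedge R$ changes every integral by at most $(\sup_{x\ge R}x/\Phi(x))\sup_n\int\Phi(\lVert q\rVert_{\ell_1})\,\Theta^n$, which is small uniformly in $n$. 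Second, for the bounded integrand $q_0\wedge R$, replacing $\indic{[0,t)}(s)$ by a continuous $\chi_\delta\in\Cc_c([0,t))$ equal to $1$ on $[0,t-\delta]$ costs at most $R\,\Theta^n((t-\delta,t)\times\ell_1^+)=R\delta$, using the rigid time-marginal $\Theta^n([a,b]\times\ell_1^+)=b-a$ carried by every element of $\Yc$ (which also forces $\Theta(\{t\}\times\ell_1^+)=0$). Finally $(s,q)\mapsto\chi_\delta(s)(q_0\wedge R)$ is bounded, continuous on $\Rb_+\times(\ell_1^+,v)$ and compactly supported in $s$, hence an admissible $w^\#$-test function, so its integral along $\Theta^n$ converges; letting $R\to\infty$ and $\delta\to 0$ gives $\tilde B_t^\vphi(\Theta^n)\to\tilde B_t^\vphi(\Theta)$.

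Assembling the pieces, $F_t^\vphi$ is continuous at each $(\nu,\Theta)$ with $\nu\in\Cc(\Rb_+,w-\Xc)$, and the continuous mapping theorem concludes. I expect the main obstacle to be precisely the tension between unboundedness and the lack of local compactness of the state spaces: passing to the limit in $\brak{\cdot}{a\vphi'}$, $\brak{\cdot}{b\vphi'}$ (linear growth in $x$) and in $\int q_0\,d\Theta$ (unbounded coordinate), together with the evaluation at the fixed time $t$ and across the discontinuity at $s=t$. In each case the resolution is to lean on the precise topologies rather than on boundedness — $(\Xc,w)$ being narrow convergence of $(\indic{}+\Id)\cdot\nu$, which turns linear growth into boundedness after dividing by $1+x$; the almost-sure continuity of the limit path from Proposition \ref{prop:tight_weak}; the uniform moment estimates of Propositions \ref{prop:moment}--\ref{prop_mu_1} together with the weak-compactness characterizations (Lemmas \ref{lem:compactcontain}, \ref{lem:compactY}), which provide the required uniform integrability in $x$ and in $q$; and the rigid time-marginal $\Theta([0,t]\times\ell_1^+)=t$ built into $\Yc$, which removes the atom at $s=t$.
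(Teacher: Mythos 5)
Your proposal is correct and follows exactly the route of the paper: $F_t^\vphi$ is continuous at every point of $\Cc(\Rb_+,w-\Xc)\times\Yc$, the accumulation point $(\mu,\Gamma)$ lies almost surely in that set by Proposition \ref{prop:tight_weak}, and the continuous mapping theorem concludes. The only difference is that the paper asserts the continuity of $D^\vphi_t$, $B^\vphi_t$ and $\tilde B^\vphi_t$ as evident from their definitions, whereas you supply the supporting details (local uniformity of Skorokhod convergence to a continuous limit, domination of the time integrands, and the truncation in $q_0$ and in time for the $\Theta$-term), all of which are consistent with the topologies set up in the appendices.
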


\begin{proof}
 From their own definition, it appears clearly that  $D^\vphi_t$ and $B^\vphi_t$ are continuous on  $\Dc(\Rb_+,w-\Xc)$, and that $\tilde B^\vphi_t$ is continuous on $(\Yc,w^\#)$. Moreover, the continuity of $t\mapsto \brak{\nu_t}{\vphi}$ entails the continuity of the application $\tilde \nu \mapsto \lb \tilde \nu_t,\vphi\rb$ at $\nu$. Thus, for all $\vphi \in \mathcal C_b^1(\Rb_+)$ and $t\geq 0$, $F_t^\vphi$ is continuous from  $\Dc(\Rb_+,w-\Xc)\times(\Yc,w^\#)$ to $\Rb$ at any point in $\Cc(\Rb_+,w-\Xc)\times \Yc$. Finally, as we said in introduction of Section \ref{sec:proof}, up to a subsequence $\{(\mu^\veps,\Gamma^\veps)\}$ converges to an accumulation point $(\mu,\Gamma)$ in $\Pc\big(\Dc(\Rb_+,w-\Xc)\times(\Yc,w^\#)\big)$ such that $\mu$ belongs to $\Cc(\Rb_+,w-\Xc)$ by Proposition \ref{prop:tight_weak}. It concludes the proof. 
 \end{proof}
Before stating the last lemma which will achieve the proof of Theorem \ref{thm:SBD_rescale_limit}, we introduce a technical result that will be useful to treat the convergence of some terms. 

\begin{lemma}\label{lem_conv_tronque}
 Let $\{\nu^\veps\}$ be a sequence of $\Dc(\Rb_+,w-\Xc)$ such that there exists a function $\Phi_1 \in \Uc_{1,\infty}$ satisfying for any $T>0$
 \[\sup_{\veps>0} \esp{\sup_{t\in[0,T]} \lb \nu_t^\veps , \indic{} + \Phi_1 \rb} < +\infty\,.\] 
 Consider a sequence $\{\vphi^\veps\}$ in $C(\Rb_+)$ such that there exists a constant $K>0$ with $\vphi^\veps(x)\leq K(1+x)$ for all $x$ and $\veps>0$. If $\{\vphi^\veps\}$ converges towards a function $\vphi$ uniformly on the compact sets, then for all $t\in[0,T]$
 \begin{equation*}
 \esp{ \int_0^t \lvert \lb \nu_s^\veps, \vphi^\veps-\vphi \rb \rvert ds }\longrightarrow 0\,,\quad \veps \to 0\,.
 \end{equation*}
\end{lemma}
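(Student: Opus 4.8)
The plan is a truncation argument in the size variable. Fix $T>0$ and $t\in[0,T]$, and set $C:=\sup_{\veps>0}\esp{\sup_{s\in[0,T]}\lb\nu_s^\veps,\indic{}+\Phi_1\rb}<+\infty$. For $R>0$ I split $\vphi^\veps-\vphi=(\vphi^\veps-\vphi)\indic{[0,R)}+(\vphi^\veps-\vphi)\indic{[R,+\infty)}$ and estimate the two resulting contributions to $\esp{\int_0^t|\lb\nu_s^\veps,\vphi^\veps-\vphi\rb|\,ds}$ separately: the first using the uniform convergence on compacts together with the uniform mass bound, the second using the superlinear moment bound carried by $\Phi_1$.

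For the bounded part, $|\lb\nu_s^\veps,(\vphi^\veps-\vphi)\indic{[0,R)}\rb|\le\big(\sup_{[0,R]}|\vphi^\veps-\vphi|\big)\,\lb\nu_s^\veps,\indic{}\rb$, so after integrating in $s$ and taking expectation this contributes at most $tC\sup_{[0,R]}|\vphi^\veps-\vphi|$, which tends to $0$ as $\veps\to0$ for each fixed $R$ since $\vphi^\veps\to\vphi$ uniformly on $[0,R]$. For the tail, passing to the limit in the growth bound gives $|\vphi(x)|\le K(1+x)$, hence $|\vphi^\veps(x)-\vphi(x)|\le 2K(1+x)$ for all $x$ and $\veps$ (reading the hypothesis $\vphi^\veps\le K(1+x)$ as a bound on the modulus, as holds in all our applications). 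Since $\Phi_1\in\Uc_{1,\infty}$ is nonnegative, increasing and satisfies $\Phi_1(x)/x\to+\infty$, for $R$ large enough one has $1+x\le\varpi(R)\,\Phi_1(x)$ for all $x\ge R$, where $\varpi(R):=\Phi_1(R)^{-1}+\sup_{x\ge R}x/\Phi_1(x)\to0$ as $R\to+\infty$. Therefore $|\lb\nu_s^\veps,(\vphi^\veps-\vphi)\indic{[R,+\infty)}\rb|\le 2K\varpi(R)\lb\nu_s^\veps,\Phi_1\rb$, whose $\esp{\int_0^t(\,\cdot\,)\,ds}$ is at most $2KtC\varpi(R)$, uniformly in $\veps$.

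Combining, $\esp{\int_0^t|\lb\nu_s^\veps,\vphi^\veps-\vphi\rb|\,ds}\le tC\sup_{[0,R]}|\vphi^\veps-\vphi|+2KtC\varpi(R)$ for every $R$ large enough. Given $\delta>0$, one first chooses such an $R$ with $2KtC\varpi(R)<\delta/2$, and then $\veps$ small enough that $tC\sup_{[0,R]}|\vphi^\veps-\vphi|<\delta/2$; this yields the claim. The only genuinely delicate point is the uniform-in-$\veps$ control of the tail, which is precisely where the hypotheses enter — the superlinearity of $\Phi_1$ together with the linear growth of $\vphi^\veps$ (hence of $\vphi$); everything else is elementary.
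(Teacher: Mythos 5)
Your proof is correct and follows essentially the same route as the paper's: truncation at a level $R$, uniform convergence on $[0,R]$ combined with the uniform bound on $\lb \nu^\veps_t,\indic{}\rb$ for the bounded part, and the superlinearity of $\Phi_1$ (via $\sup_{x\geq R}(1+x)/\Phi_1(x)\to 0$) for the tail. The only difference is cosmetic — your factor $\varpi(R)$ versus the paper's $\sup_{x\geq R}(1+x)/\Phi_1(x)$ — and your remark that the growth hypothesis must be read as a bound on the modulus is the same implicit reading the paper makes.
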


\begin{proof}
We use the same ideas as in the proof of Lemma \ref{lem:equic_weak}. Let $T>0$, $R>0$, $\veps>0$ and $t \in[0,T]$,  we write
\begin{equation*}
 \lvert \lb \nu_t^\veps, \vphi^\veps-\vphi \rb \rvert \leq  \lvert \lb \nu_t^\veps, (\vphi^\veps-\vphi)\indic{x<R}  \rb \rvert + 2 K \brak{\nu_t^\veps}{(1+x)\indic{x\geq R}}\,.
\end{equation*}
Thus,
\begin{multline*}
\esp{ \int_0^t \lvert \lb \nu_s^\veps, \vphi^\veps-\vphi \rb \rvert \, ds } \leq  T \sup_{x\leq R}\lvert \vphi^\veps(x)-\vphi(x) \rvert \ \esp{ \sup_{t\in[0,T]} \lb \nu_t^\veps, \indic{} \rb } \\ 
 + 2 K T \left( \sup_{x\geq R}\frac{1+x}{\Phi_1(x)}\right) \esp{\sup_{t\in[0,T]}\brak{\nu_t^\veps}{\Phi_1}} \, .
\end{multline*}
We conclude using the moment estimates. Indeed taking the $\limsup$ in $\veps \to 0$ the first term on the right-hand side goes to $0$ and then letting $R\to +\infty$ the second term goes to $0$ too with the property fulfilled by $\Phi_1$.
\end{proof}

\begin{lemma} \label{lem:limitF}
  Under the same assumptions as Theorem \ref{thm:SBD_rescale_limit}, for all $\vphi \in \mathcal C_b^1(\Rb_+)$ and $t\geq 0$
 \begin{equation} \label{eq:limit_F}
  \lim_{\veps\to 0} \esp{ \lvert F_t^\vphi(\mu^\veps,\Gamma^\veps)\rvert} = 0 \,.
 \end{equation}
\end{lemma}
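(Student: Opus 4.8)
The plan is to start from the martingale decomposition of Proposition~\ref{cor:rescaled_process}, applied with $\vphi$ (and a suitable modification near the boundary), and to rewrite $\brak{\mu^\veps_t}{\vphi} - \brak{\mu^\veps_{\rm in}}{\vphi} = \Vc^{\veps,\vphi}_t + \Oc^{\veps,\vphi}_t$ so that it matches, term by term, the functional $F^\vphi_t(\mu^\veps,\Gamma^\veps)$ up to an error that vanishes in expectation. By definition of $F^\vphi_t$ we have
\begin{equation*}
 \esp{\abs{F_t^\vphi(\mu^\veps,\Gamma^\veps)}} \leq \esp{\abs{\Oc_t^{\veps,\vphi}}} + \esp{\abs{\Vc_t^{\veps,\vphi} - D_t^\vphi(\mu^\veps) - B_t^\vphi(\mu^\veps) - \tilde B_t^\vphi(\Gamma^\veps)}} + \esp{\abs{\brak{\mu^\veps_{\rm in}}{\vphi} - \brak{\mu_{\rm in}}{\vphi}}}\,,
\end{equation*}
and we treat the three pieces separately. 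The last term tends to $0$ by the assumed convergence in law of $\mu^\veps_{\rm in}$ to the deterministic $\mu_{\rm in}$ together with $\abs{\vphi}\le\norm{\vphi}_\infty$; for the martingale term, by Jensen and the identity $\esp{(\Oc_t^{\veps,\vphi})^2} = \esp{\langle\Oc^{\veps,\vphi}\rangle_t}$, it suffices to show $\esp{\langle\Oc^{\veps,\vphi}\rangle_t}\to 0$, which is immediate from \eqref{eq:martingale_rescale}: every summand carries an explicit prefactor $\veps$ or $\veps^2$, while $\abs{\vphi(2\veps)}$, $(\Delta_{\pm\veps}\vphi)^2 \le \norm{\vphi'}_\infty^2$, the rate bounds \eqref{H3}--\eqref{H4} and the moment estimates \eqref{eq:C_uni_bound}, \eqref{eq:mu,Id}, \eqref{eq:mu,1} keep the integrals bounded uniformly in $\veps$.

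The core of the proof is the second term. I would split $\Vc_t^{\veps,\vphi}$ into the three natural blocks of \eqref{eq:varfinie_rescale}. \textbf{Bulk/drift block.} Using the Taylor expansion $\Delta_{\pm\veps}(\vphi)(x) = \vphi'(x) + O(\veps\norm{\vphi''}_\infty)$ (and handling the endpoint $\vphi'(2\veps), \vphi'(3\veps)$ by continuity of $\vphi'$), the aggregation and fragmentation integrals in \eqref{eq:varfinie_rescale} become $\int_0^t\int \vphi'(x)(a^\veps(x)u^\veps(s) - b^\veps(x))\mu^\veps_s(dx)\,ds$ plus an $O(\veps)$ error controlled by the moment bounds. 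One then replaces $a^\veps\to a$, $b^\veps\to b$, $u^\veps\to u$ and $\mu^\veps\to\mu$: the rate replacement uses Lemma~\ref{lem_conv_tronque} with $\vphi^\veps = a^\veps\vphi'$ (resp. $b^\veps\vphi'$), whose linear-growth hypothesis is exactly \eqref{H3}--\eqref{H4} and whose superlinear-moment hypothesis is Proposition~\ref{prop:moment_phi}; the passage $\mu^\veps\to\mu$ (for the now $\veps$-independent, continuous, at-most-linearly-growing integrand $x\mapsto\vphi'(x)(a(x)u(s)-b(x))$) uses the weak convergence $\mu^\veps\to\mu$ in $\Dc(\Rb_+,w-\Xc)$ together with a truncation-at-$R$ argument identical to the one in Lemma~\ref{lem_conv_tronque}/Lemma~\ref{lem:equic_weak}, plus the identification $u^\veps(s) = m^\veps - \langle\mu^\veps_s,\Id\rangle \to m - \langle\mu_s,\Id\rangle = u(s)$. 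This reproduces $D_t^\vphi(\mu)$. \textbf{Nucleation block.} In the first line of \eqref{eq:varfinie_rescale}, $\vphi(2\veps)\to\vphi(0)$, $\alpha^\veps\to\alpha$, $u^\veps(s)(u^\veps(s)-\veps^2)\to u(s)^2$ uniformly on $[0,t]$ by \eqref{eq:C_uni_bound}, so $\int_0^t\vphi(2\veps)\alpha^\veps u^\veps(s)(u^\veps(s)-\veps^2)\,ds \to \vphi(0)\int_0^t\alpha u(s)^2\,ds = B_t^\vphi(\mu)$, again after replacing $u^\veps$ by $u = m - \langle\mu,\Id\rangle$ inside the (now continuous, quadratic) integrand using weak convergence of $\mu^\veps$ and dominated convergence. \textbf{De-nucleation block.} The term $-\int_0^t\vphi(2\veps)\beta^\veps\mu^\veps_s(\{2\veps\})\,ds$ is exactly $-\vphi(2\veps)\int_{[0,t]\times\ell_1^+}\beta^\veps q_0\,\Gamma^\veps(ds\times dq)$ by the definition \eqref{def:Gamma} of the occupation measure and $p^\veps_{0,s} = \mu^\veps_s(\{2\veps\})$; since $\vphi(2\veps)\to\vphi(0)$, $\beta^\veps\to\beta$, the evaluation map $q\mapsto q_0$ is continuous and bounded on bounded sets of $(\ell_1^+,v)$, and $\Gamma^\veps\to\Gamma$ in $(\Yc,w^\#)$ (with the uniform integrability $\esp{\int_{[0,t]}(1+\norm{q}_{\ell_1})\Gamma^\veps} < \infty$ from Proposition~\ref{prop_mu_1}), this converges to $-\vphi(0)\int_{[0,t)\times\ell^1}\beta q_0\,\Gamma(ds\times dq)$. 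For a general $\vphi\in\Cc^1_b$ that is \emph{not} affine near $0$, the $\vphi'(0)b(0)\int q_0\,\Gamma$ term in $\tilde B_t^\vphi$ arises from the same manipulation applied not to $\vphi$ directly but after separating the boundary contribution of the bulk integrals — concretely, one checks that $\int_{2\veps}^\infty\Delta_{-\veps}(\vphi)(x)b^\veps(x)\mu^\veps_s(dx)$ keeps, beyond the bulk part $\int\vphi'(x)b(x)\mu^\veps_s(dx)$, a genuine contribution from the single atom at $3\veps$ that the integral $\int_{3\veps}^\infty$ omits, namely $\vphi'(0)b(0)\mu^\veps_s(\{2\veps\}) + o(1)$ after Taylor expansion and \eqref{H6}; recast via $\Gamma^\veps$ this gives precisely $\vphi'(0)b(0)\int q_0\,\Gamma$. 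Assembling the three blocks yields $\Vc_t^{\veps,\vphi} - D_t^\vphi(\mu^\veps) - B_t^\vphi(\mu^\veps) - \tilde B_t^\vphi(\Gamma^\veps) \to 0$ in $L^1$.

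A point requiring care throughout: the functionals $D^\vphi_t$, $B^\vphi_t$ are continuous on $\Dc(\Rb_+,w-\Xc)$ only at points of $\Cc(\Rb_+,w-\Xc)$ and $\tilde B^\vphi_t$ is continuous on $(\Yc,w^\#)$, so one cannot simply invoke the continuous mapping theorem on $\abs{F^\vphi_t}$; instead, as organized above, one establishes the $L^1$-convergence of each constituent expression to its limit directly, which is why Lemma~\ref{lem:continuity_F} is stated separately and why the moment bounds (Propositions~\ref{prop:moment}, \ref{prop:moment_phi}, \ref{prop_mu_1}) are invoked repeatedly to justify uniform integrability and the $R\to\infty$ truncation arguments. \textbf{The main obstacle} is precisely the de-nucleation/boundary block: it is the only place where the fast variable $\mu^\veps(\{2\veps\})$ enters, it does \emph{not} converge pathwise (it fluctuates at the faster time scale $\veps^{-(1-\min(r_a,r_b))}$), and the entire point of the occupation-measure machinery of \cite{Kurtz1992} is to make sense of its time-average; getting the weak$^\#$ continuity of $\Theta\mapsto\int q_0\,\Theta$ against the test function, together with the correct bookkeeping of which atom ($2\veps$ vs $3\veps$) contributes the $\vphi'(0)b(0)$ term, is the delicate part. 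Everything else is Taylor expansion plus the moment estimates already established.
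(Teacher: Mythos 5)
Your skeleton is the paper's: write $F_t^\vphi(\mu^\veps,\Gamma^\veps)=\Oc_t^{\veps,\vphi}+\big(\Vc_t^{\veps,\vphi}-D_t^\vphi(\mu^\veps)-B_t^\vphi(\mu^\veps)-\tilde B_t^\vphi(\Gamma^\veps)\big)$, kill the martingale via its quadratic variation \eqref{eq:martingale_rescale} and Burkh\"older, and control the rest by Taylor expansion, the coefficient convergences \eqref{H1}--\eqref{H4}, Lemma~\ref{lem_conv_tronque} and the moment estimates. Your first displayed inequality (modulo the superfluous third term: $F_t^\vphi(\mu^\veps,\Gamma^\veps)$ contains $\brak{\mu^\veps_{\rm in}}{\vphi}$, not $\brak{\mu_{\rm in}}{\vphi}$, so the initial data cancel exactly against \eqref{eds_weak}) is precisely the right starting point. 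Your bookkeeping of the $\vphi'(0)b(0)\int q_0\,\Gamma^\veps$ term is also essentially correct once one says it properly: it is the atom at $2\veps$, carried by $D^\vphi_t(\mu^\veps)$ through $\lb\mu^\veps_s,b\vphi'\rb$ but omitted by the fragmentation integral $\int_{3\veps}^{+\infty}$ in \eqref{eq:varfinie_rescale}, that produces $\vphi'(2\veps)b(2\veps)\mu^\veps_s(\{2\veps\})$ and is then rewritten via $\int q_0\,\Gamma^\veps=\int_0^t p^\veps_{0,s}\,ds$.

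The genuine gap is in the execution of the middle term. The lemma asserts $\esp{|F_t^\vphi(\mu^\veps,\Gamma^\veps)|}\to 0$, i.e.\ the functionals $D$, $B$, $\tilde B$ are evaluated at the \emph{$\veps$-objects themselves}; yet throughout your ``core'' you replace $\mu^\veps\to\mu$, $u^\veps\to u$ and $\Gamma^\veps\to\Gamma$ and claim the blocks converge to $D_t^\vphi(\mu)$, $B_t^\vphi(\mu)$ and $\tilde B_t^\vphi(\Gamma)$. This is both unnecessary and unjustified: $\mu^\veps\to\mu$ and $\Gamma^\veps\to\Gamma$ hold only in law along a subsequence, the limits live a priori on a different probability space, and convergence in law does not yield the $L^1$ convergences you invoke (e.g.\ of $\int\vphi'a\,d\mu^\veps_s$ to $\int\vphi'a\,d\mu_s$, or of $\int q_0\,\Gamma^\veps$ to $\int q_0\,\Gamma$) without a Skorokhod representation you never set up. The correct comparison keeps $\mu^\veps$ and $\Gamma^\veps$ on both sides, so that every remainder involves only deterministic discrepancies --- $m^\veps-m$, $a^\veps-a$, $b^\veps-b$, $\alpha^\veps-\alpha$, $\beta^\veps-\beta$, $\Delta_{\pm\veps}\vphi-\vphi'$, $\vphi(2\veps)-\vphi(0)$ --- multiplied against quantities bounded in $L^1$ uniformly in $\veps$ by Proposition~\ref{prop:moment}; each such remainder vanishes by \eqref{H1}--\eqref{H4}, \eqref{eq:k_c_less_Kk}, \eqref{eq:b2_mu,2_bound} and Lemma~\ref{lem_conv_tronque}. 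The step you are smuggling in here (passing from $F(\mu^\veps,\Gamma^\veps)$ to $F(\mu,\Gamma)$) is exactly what Lemma~\ref{lem:continuity_F} together with \cite[Theorem 3.4]{Billingsley99} accomplishes afterwards, and it cannot be absorbed into the present $L^1$ estimate.
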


\begin{proof}

First, we remark that by Equation \eqref{eds_weak},
\[  F_t^\vphi(\mu^\veps,\Gamma^\veps) = \Oc^{\veps, \vphi}_t + R^{\veps,\vphi}_t \,,\]
where $R^{\veps,\vphi}_t = \Vc^{\veps,\vphi}_t - D^\vphi_t(\mu^\veps) -  B^\vphi_t(\mu^\veps) - \tilde B^\vphi_t (\Gamma^\veps) = \sum_{i=1}^8 R^{\veps,\vphi, i}_t$ with the terms corresponding to the drift: 

\begin{align*}
 & \ds R^{\veps,\vphi, 1}_t  = (m^\veps-m) \int_0^t   \lb \mu_{s}^\veps , a \vphi' \rb  \, ds \,, \\[0.8em] 
 & \ds  R^{\veps,\vphi, 2}_t  =  \int_0^t  (m^\veps - \lb \mu_s^\veps,\Id\rb) \lb \mu_{s}^\veps , (a^\veps-a) \vphi' \rb  \, ds  \,, \\[0.8em] 
 & \ds R^{\veps,\vphi, 3}_t =  \int_0^t  \lb \mu_{s}^\veps , (b-b^\veps) \vphi' \rb  \, ds  \,, \\[0.8em] 
 & \ds R^{\veps,\vphi, 4}_t =  \int_0^t   (m^\veps - \lb \mu_s^\veps,\Id\rb) \lb \mu_{s}^\veps, ( \Delta_{\veps}(\vphi) - \vphi' ) a^\veps \rb   \, ds  \,, \\[0.8em]
 & \ds R^{\veps,\vphi, 5}_t = \int_0^t  \lb \mu_{s}^\veps, ( \vphi'-\Delta_{-\veps}(\vphi) ) b^\veps \indic{(3\veps,+\infty)} \rb   \, ds \,,
\end{align*}
and to the boundary:
\begin{align*}
&  R^{\veps,\vphi, 6}_t = \int_0^t  \vphi(2\veps) \alpha^\veps (m^\veps - \lb \mu_s^\veps,\Id\rb)(m^\veps - \lb \mu_s^\veps,\Id\rb - \veps^2) -   \vphi(0) \alpha(m - \lb \mu_s^\veps,\Id\rb)^2  \, ds \,,\\[0.8em]
&   R^{\veps,\vphi, 7}_t  =  \int_{[0,t)\times \ell^1}  \vphi(0) \beta q_0  \Gamma^\veps(ds\times dq) - \int_{[0,t)\times \ell^1}  \vphi(2\veps) \beta^\veps q_0  \Gamma^\veps(ds\times dq)\,,\\[0.8em]
&  R^{\veps,\vphi, 8}_t  =  \int_{[0,t)\times \ell^1} \vphi'(2\veps) b^\veps(2\veps) q_{0} \Gamma^\veps (ds\times dq) 
 - \int_{[0,t)\times \ell^1} \vphi'(0) b(0)  q_{0} \Gamma^\veps (ds\times dq)   \,.
\end{align*}
First, using the Burkh\"older inequality, we have
\begin{equation*}
\esp{\Abs{\Oc_t^{\veps,\vphi}}} \leq \Big{(} \esp{\Abs{\Oc_t^{\veps,\vphi}}^2}\Big{)}^{1/2}\leq \Big{(}\esp{\lb \Oc^{\veps,\vphi} \rb_t}\Big{)}^{1/2} \,.
\end{equation*}
Starting from Equation \eqref{eq:martingale_rescale} with $\vphi \in \mathcal C_b^1(\Rb_+)$, we have by the constant $K_0$ in \eqref{eq:k_c_less_Kk} and \eqref{eq:b2_mu,2_bound}
\begin{multline*}
\esp{\lb \Oc^{\veps,\vphi} \rb_t} \leq t \veps 2 K_0 || \vphi||_{\infty}^2
+ t  || \vphi'||_{\infty}^2 \veps^{2}\left(K_a \sup_{s\in[0,t]} u^\veps(s)+K_b\right) \left( \esp{\sup_{s\in[0,t]}\brak{\mu_s^\veps}{\indic{}}}+\esp{\sup_{s\in[0,t]}\brak{\mu_s^\veps}{\mathrm{Id}})} \right)\,.
\end{multline*}
Thus, using the moment estimates in Proposition \ref{prop:moment},
\begin{equation*}
 \lim_{\veps\to 0} \esp{\Abs{\Oc_t^{\veps,\vphi}}} = 0.
\end{equation*}
Then, since $m^\veps := u_{\rm in}^\veps + \lb \mu_{\rm in}^\veps,\Id \rb$ we have that $m^\veps \to m := u_{\rm in} + \lb \mu_{\rm in},\Id \rb$. Indeed $\{u_{\rm in}^\veps\}$ and $\lb \mu_{\rm in}^\veps,\Id \rb$ converge both in law towards a deterministic value, thus they converge in probability and then \as~up to a subsequence. Hence, the expectation of the remainder $\lvert R^{\veps,\vphi, 1}_t \rvert$ goes to $0$ by Proposition  \ref{prop:moment} and hypothesis \eqref{H3}. As well, the expectations of the remainders $\lvert R^{\veps,\vphi, 2}_t \rvert$ to $\lvert R^{\veps,\vphi, 5}_t\rvert$ go to $0$ by the convergence of $a^\veps$ and $b^\veps$ in \eqref{H3} and \eqref{H4}, the moment estimates in Proposition \ref{prop:moment} and the above result in Lemma \ref{lem_conv_tronque}. The remainder $\lvert R^{\veps,\vphi, 6}_t \rvert$ converges to $0$ thanks to \eqref{H1}, the convergence of $m^\veps$ and the estimates in Proposition \ref{prop:moment}.

For the two last remainders $\lvert R^{\veps,\vphi, 7}_t \rvert$ and $\lvert R^{\veps,\vphi, 8}_t \rvert$ , we use a similar strategy as in Lemma \ref{lem_conv_tronque}. Indeed, for any $\vphi\in\Cc_b(\Rb_+)$ and $t\geq 0$,
 \begin{multline*}
  \left\lvert  \int_{[0,t]\times \ell_1^+}  \vphi(0) \beta q_0  \Gamma^\veps(ds\times dq) - \int_{[0,t]\times \ell_1^+}  \vphi(2\veps) \beta^\veps q_0  \Gamma^\veps(ds\times dq)  \right\rvert  \\
   \leq \veps \beta \lvert \vphi(0) - \vphi(2\veps) \rvert  \int_{[0,t]\times \ell_1^+} q_0 \Gamma^\veps(ds\times dq) \\
   + ||\vphi||_{\infty} \lvert \beta -\beta^\veps\rvert \int_{[0,t]\times \ell_1^+} q_0 \Gamma^\veps(ds\times dq)
 \end{multline*}
and we conclude for $\lvert R^{\veps,\vphi, 7}_t \rvert$ remarking that 
\[ \int_{[0,t]\times \ell_1^+} q_0 \Gamma^\veps(ds\times dq) = p_{0,t}^\veps=\mu_t^\veps(\{2\veps\}) \leq \lb \mu_t^\veps,\indic{} \rb \]
and then using \eqref{H2} and Estimation \eqref{eq:mu,1} in Proposition \ref{prop:moment}. The same holds for $\lvert R^{\veps,\vphi, 8}_t \rvert$. This proves that \eqref{eq:limit_F} holds.
\end{proof}

\begin{proof}[Proof of Theorem \ref{thm:SBD_rescale_limit}]
 
 By Propositions \ref{prop:tight_weak} and \ref{prop:tightbord_measure} it follows that along an appropriate subsequence, $\{(\mu^\veps,\Gamma^\veps)\}$  converges as $\veps \to 0$,  to $(\mu,\Gamma)$ in $\Pc\big(\Dc(\Rb_+,w-\Xc)\times (w^\#-\Yc) \big)$ with $\mu\in \Cc(\Rb_+,w-\Xc)$. Moreover, by Lemma \ref{lem:continuity_F}, Lemma \ref{lem:limitF} and \cite[Theorem 3.4]{Billingsley99}, it readily follows that, for any $\vphi\in\Cc_b(\Rb_+)$ and $t\geq 0$,
  \[ \esp{\lvert F^\vphi_t(\mu,\Gamma)\rvert} \leq \liminf_{\veps\to 0} \esp{\lvert F^\vphi_t(\mu^\veps,\Gamma^\veps)\rvert} = 0\,. \]
 Thus, for all $\vphi\in C_b^1(\Rb_+)$ and $t\geq 0$ we have 
 \[F^\vphi_t(\mu,\Gamma) = 0\,,\quad a.s.\]
 Moreover, by Proposition \ref{prop:tight_weak} we have $\{u^\veps\}$ is tight in $\Dc(\Rb_+,\Rb_+)$ and converges (along the same subsequence, up to a modification) to a non-negative $u$ for which it is easy to show that it belongs to $\Cc_c(\Rb_+,\Rb_+)$. By the same arguments as above, we obtain, for all $t\geq 0$, that
 \[ u(t) + \lb\mu_t,\Id\rb = m\,, \ a.s.\]
 %
 
\end{proof}
\begin{proof}[Proof of Theorem \ref{thm:LS_convvague}]
As said before, it is sufficient to consider in \eqref{eq:weak_LS_vague1} the functions $\vphi\in \Cc_c(\Rb_+^*)$, that is separable. Thus, by construction of a set of probability  $0$ as the countable union of probability $0$ sets, \as~for all $t\in\Qb^+$ and $\vphi$ in a dense subset of  $\Cc_c(\Rb_+^*)$ the limit $\mu$ satisfies \eqref{eq:vague_LS}. By continuity in time of $\mu$, we obtain the desired result.
\end{proof}

\subsection{Identification of the occupation measure - Proof of Theorem \ref{thm:LS_convweak}}

Theorem \ref{thm:SBD_rescale_limit} lacks of information because it does not provide any information on $\Gamma$. In this section we aim to identify this measure thanks to a particular limit of the generator $\Hc^\veps$ defined in \eqref{eq:Hveps} and more precisely to its unique stationary measure when it is possible. 

\noindent To that, we focus on $p^\veps$, defined by \eqref{def:p}, through its infinitesimal generator $\Hc^\veps$. As we saw, for each $\veps>0$ the processes $\mu^\veps$ and $p^\veps$ are compactly supported. However the same property is not expected at the limit. Contrary to Proposition \ref{prop:generateur_p}, it requires to make the infinitesimal generator act on functions allowing us to consider sequences in the whole space $\ell_1^+$, not only compactly supported. Therefore, we introduce the domain $\Gc$ defined as
\[ \Gc := \left\{ g:\ell_1\to \Rb \, : \, \exists N\geq 1, \ \exists G\in\Gc_N, \ g(\vphi) = G(\vphi_0,\ldots,\vphi_{N-1}),\ \forall \vphi \in\ell_1   \right\}\,, \] 
where 
\[ \Gc_N := \left\{ G\in\Cc^2(\Rb^N) \, : \, G(0) = 0 \text{ and } \partial_n G \in \Cc^1_c( \Rb^N),\, n=0,\ldots,N-1 \right\} \,.\]
Remark, $\partial_n$ denotes the partial derivatives with respect to the $n^{th}$ variable.

\noindent Now, contrary to Proposition \ref{prop:generateur_p}, using the idea of \cite{Kurtz1992}, we see the infinitesimal generator $\Hc^\veps$ as an operator coupling the action of $p^\veps$ and $\mu^\veps$. In order to do that, we define, thanks to Assumption \ref{hyp:coef_BD}, for all $g$ in the domain $\Gc$, and for all $(\nu,q)\in\Xc\times\ell_1^+$,   the operator

\begin{multline*}
\widetilde \Hc^{\veps} g (\nu,q)= \ds \frac{g(q+ \veps \indic{0}) - g(q)}{\veps} \alpha^\veps c(c-\veps^2) \vphantom{\sum_{i\geq k}}
 + \ds \frac{g(q- \veps \indic{0}) - g( q)}{\veps} \beta^\veps q_0 \vphantom{\sum_{i\geq k}} \\
  + \veps^{-(1-r_a)}   \sum_{n\geq0}  \frac{g(q+ \veps (\indic{n+1}-\indic{n})) - g( q)}{\veps}  \frac{a^{\veps}(\veps(n+2))}{\veps^{r_a}}   c q_n\\
 + \veps^{-(1-r_b)}  \sum_{n \geq 1} \frac{g(q- \veps (\indic{n}-\indic{n-1})) - g( q)}{\veps} \frac{b^{\veps}(\veps(n+2))}{\veps^{r_b}} q_n \vphantom{\sum_{i\geq k}}\,,
\end{multline*}
where $c$ is now replaced by $m^{\veps}-\lb \nu ,\Id \rb$ contrary to the previous definition of $\Hc^\veps$ in \eqref{eq:Hveps}. The operator $\widetilde \Hc^{\veps}$ is well-defined on the whole domain $\Gc$ since: for all $g$ in $\Gc$, there exists a $N'\geq 0$ such that  
$$g(q+ \veps (\indic{n+1}-\indic{n}))= g(q- \veps (\indic{n}-\indic{n-1})) =g(q)\,,$$
for all $q \in \ell_1^+$ and $n\geq N'$. It readily follows from its definition that, for all $g \in \Gc$,
 \begin{equation}\label{eq_on_H}
 g( p^\veps_t) = g(p^\veps_0)+  \int_0^t [\widetilde \Hc^\veps g ](\mu_s^\veps,p^\veps_s)\,ds + \Oc_t^{\veps, g}\,,
 \end{equation}
 where $\Oc_t^{\veps\, g}$ is a martingale. Remark that, taking $r:= \min(r_a,r_b)<1$ and multiplying this generator by $\veps^{(1-r)}$, at the limit some terms will vanish depending on the value of $r_a$ and $r_b$. The latter depend on the behavior of $a$ and $b$ around $0$. Indeed, a direct consequence of Assumption \ref{hyp:coef_BD} implies that for all $n\in\Nb$,

 \begin{equation}\label{def:an,bn}
  \dfrac{a^{\veps}(\veps(n+2))}{\veps^{r_a}} \underset{\veps\to 0}{\rightarrow}a_n := \overline{a}\,  (n+2)^{r_a}, \quad \dfrac{b^{\veps}(\veps(n+2))}{\veps^{r_b}} \underset{\veps\to 0}{\rightarrow} b_n:=\overline{b} \, (n+2)^{r_b}\,.
 \end{equation}
with $\{a_n\}$ and $\{b_n\}$ positive by Assumption \ref{hyp:coef_BD}. We are in position to define the limit operator: for all $g$ in $\Gc$ and  $(\nu,q)\in\Xc\times\ell_1^+$,
 
\begin{equation}\label{gene-H}
[\widetilde \Hc\ g](\nu,q) := \sum_{n\geq 0} Dg[q](1_n) (J_{n-1}(\nu,q) -J_n(\nu,q))
= \sum_{n\geq 0} Dg[q](1_{n+1}-1_n) J_{n}(\nu,q)\,,
\end{equation}
where $Dg$ is the Fr\'echet derivative of $g$, by convention $J_{-1}=0$ and for all $n\geq 0$ \begin{equation}\label{def:BD_fluxes}
 J_n(\nu,q) := \begin{cases}
\ a_n c q_n - b_{n+1}q_{n+1}\,, & \text{if } r_a=r_b< 1\,,\\
\ a_n c q_n\,,  & \text{if } r_a<r_b\,, \text{ and } r_a< 1\,,\\
\ -b_{n+1}q_{n+1}\,, & \text{if } r_b<r_a\,, \text{ and } r_b< 1\,.
\end{cases}
\end{equation}
with $c=m-\brak{\nu}{\mathrm{Id}}$, the constant $m$ arising from the limit in Theorem \ref{thm:SBD_rescale_limit}. Note the similarity with the classical Becker-D\"oring fluxes for the deterministic equations. The next theorem identifies the limit $\Gamma$ as a stationnary measure.

\begin{theorem} \label{prop:cv_bord}
In addition to the hypotheses of Theorem \ref{thm:SBD_rescale_limit}, suppose that Assumption \ref{hyp:coef_BD} holds. Then, it exists $(\gamma_t)_{t\geq0}$ a $\Pc(v-\ell_1^+)$-valued optional process such that $\Gamma = \gamma_t(dq) dt$ and with probability one, \ae~$t\geq0$ and all $g\in\Gc$
\[\int_{\ell^+_1} [\widetilde \Hc \, g](\mu_t,q) \gamma_t (dq) = 0\,.\]
Moreover, let $\rho$ be defined by \eqref{def:rho}. On a time interval $[t_0,t_1]$ such that the limit $c_t>\rho$ for all $t_0\leq t \leq t_1$,  then \as~$\gamma_t=\delta_{\mathbf{0}}$ the Dirac measure at the null sequence of $\ell_1^+$.
\end{theorem}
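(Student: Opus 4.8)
The plan is to exploit the martingale identity \eqref{eq_on_H} together with the tightness and averaging machinery of \cite{Kurtz1992}. First I would fix $g\in\Gc$ and multiply \eqref{eq_on_H} by the time-scale factor $\veps^{1-r}$, $r:=\min(r_a,r_b)<1$, obtaining
\[
\veps^{1-r}\big(g(p^\veps_t)-g(p^\veps_0)\big)
= \int_0^t \veps^{1-r}[\widetilde\Hc^\veps g](\mu_s^\veps,p_s^\veps)\,ds + \veps^{1-r}\Oc_t^{\veps,g}\,.
\]
Since $g$ depends on finitely many coordinates and $\partial_n G\in\Cc^1_c$, the quantity $g(p^\veps_t)$ is bounded uniformly in $\veps$ and $t$, so the left-hand side tends to $0$ in $L^1$. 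The martingale term likewise vanishes: its predictable quadratic variation carries an extra power of $\veps$ from each jump (the jump sizes are $O(\veps)$ in the arguments of $g$, hence $O(\veps)$ in value), and the rates are controlled by Proposition~\ref{prop:moment} and \eqref{def:an,bn}, so $\esp{|\veps^{1-r}\Oc_t^{\veps,g}|}\to0$. It remains to pass to the limit in the integral term. Writing $\veps^{1-r}\widetilde\Hc^\veps g$ in terms of second-order Taylor expansions of $G$, the terms with the ``wrong'' time scale $\veps^{-(1-r_a)}$ or $\veps^{-(1-r_b)}$ when $r_a\neq r_b$ drop out after multiplication by $\veps^{1-r}$, and using \eqref{def:an,bn} and the uniform convergence of $a^\veps,b^\veps$ one identifies the pointwise limit of $\veps^{1-r}\widetilde\Hc^\veps g(\nu,q)$ as $[\widetilde\Hc\, g](\nu,q)$ from \eqref{gene-H}--\eqref{def:BD_fluxes} (up to the harmless constant factor carried by $r$, which can be absorbed).

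Next I would rewrite the integral $\int_0^t \veps^{1-r}[\widetilde\Hc^\veps g](\mu_s^\veps,p_s^\veps)\,ds$ as an integral against the occupation measure, $\int_{[0,t]\times\ell_1^+} \veps^{1-r}[\widetilde\Hc^\veps g](\mu_s^\veps,q)\,\Gamma^\veps(ds\times dq)$, after replacing $\mu_s^\veps$ inside $\widetilde\Hc^\veps$ which is legitimate since $\widetilde\Hc^\veps$ was precisely built to decouple the measure argument. Because $\{(\mu^\veps,\Gamma^\veps)\}$ converges in law to $(\mu,\Gamma)$ with $\mu\in\Cc(\Rb_+,w-\Xc)$, and because $q\mapsto [\widetilde\Hc\,g](\nu,q)$ is continuous on $(\ell_1^+,v)$ with at most linear growth in $\|q\|_{\ell_1}$ (it only involves $q_0,\dots,q_{N}$ linearly, multiplied by smooth bounded functions of $(q_0,\dots,q_{N-1})$) while $\Gamma$ integrates $1+\|q\|_{\ell_1}$ by definition of $\Yc$, a uniform-integrability argument à la Lemma~\ref{lem_conv_tronque} (truncating $\|q\|_{\ell_1}$ at level $R$, using the $\Phi_2$-moment control of Proposition~\ref{prop_mu_1}) lets me pass to the limit and conclude
\[
\int_{[0,t]\times\ell_1^+} [\widetilde\Hc\,g](\mu_s,q)\,\Gamma(ds\times dq) = 0\,,\qquad \forall t\geq0,\ \as
\]
Disintegrating $\Gamma(ds\times dq)=\gamma_s(dq)\,ds$ (possible since the $[0,t]$-marginal of every element of $\Yc$ is Lebesgue measure, giving an optional $\Pc(v-\ell_1^+)$-valued process $(\gamma_s)$), and running this over a countable dense family of $g\in\Gc$, yields $\int_{\ell_1^+}[\widetilde\Hc\,g](\mu_t,q)\,\gamma_t(dq)=0$ for a.e.\ $t$ and all $g\in\Gc$, which is the stationarity statement: $\gamma_t$ is, for a.e.\ $t$, an invariant measure of the (time-frozen) generator $\widetilde\Hc(\mu_t,\cdot)$ on $\ell_1^+$.

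For the second assertion, on an interval $[t_0,t_1]$ where $c_t=m-\brak{\mu_t}{\Id}>\rho$, I would show the only invariant probability measure of $\widetilde\Hc(\mu_t,\cdot)$ is $\delta_{\mathbf 0}$. The frozen generator is that of a Becker--Döring-type birth-death chain on $\ell_1^+$ with fluxes $J_n(\mu_t,q)$ given by \eqref{def:BD_fluxes}; the key point is that the state $\mathbf 0$ is absorbing, since with $q=\mathbf 0$ every $J_n=0$ (in the pure-aggregation case $r_a<r_b$) or the detailed-balance structure forces no mass to appear (when $r_a=r_b$, the condition $c_t>\rho=\overline b/\overline a$ makes the equilibrium ``weights'' $\prod_{k} (a_k c_t/b_{k+1})$ summable only via the trivial sequence once one checks the ratios $a_k c_t/b_{k+1}\to c_t \overline a/\overline b>1$ blow the product up, so no nontrivial stationary probability exists and all mass escapes to the absorbing boundary component encoded by $q_0$ decreasing — i.e.\ the de-nucleation flux $\beta q_0$ which feeds the $\vphi(0)$ term). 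Concretely I would test the stationarity identity against well-chosen $g$: taking $g$ linear, $g(q)=\sum_{n< N}\chi_n q_n$ with suitable cutoff, produces a Lyapunov-type relation showing $\int q_0\,\gamma_t(dq)=0$ and then inductively $\int q_n\,\gamma_t(dq)=0$ for all $n$, forcing $\gamma_t=\delta_{\mathbf 0}$; feeding this back into Theorem~\ref{thm:SBD_rescale_limit}'s formula \eqref{eq:weak_LS_vague1} kills the two $\Gamma$-terms and yields \eqref{eq:weak_LS}. The main obstacle I anticipate is precisely this last uniqueness-of-invariant-measure step: one must rule out escape of mass to infinity in $\ell_1^+$ and nontrivial stationary states, which requires a careful Lyapunov/Foster argument adapted to the non-conservative (open at $n=0$) birth-death structure and uses the strict inequality $c_t>\rho$ in an essential way; the passage to the limit and disintegration, by contrast, are routine given the moment bounds already proved.
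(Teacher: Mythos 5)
Your proposal is correct, and for the first (and harder) half it follows the paper's own route almost verbatim: multiply \eqref{eq_on_H} by $\veps^{1-r}$, kill the boundary and martingale terms, replace $\veps^{1-r}\widetilde\Hc^\veps$ by $\widetilde\Hc$ (this is exactly Lemma \ref{lem:reste_gene_bord}; note there is no ``constant factor carried by $r$'' to absorb --- the terms with exponent strictly larger than $r$ simply vanish and the surviving one has coefficient exactly $a_n$ or $b_n$ from \eqref{def:an,bn}), rewrite the time integral against $\Gamma^\veps$, pass to the limit by joint continuity (Lemma \ref{lemma_continuity_Hg}), and disintegrate via Kurtz's Lemma 1.4 and separability of $\Gc$. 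Where you genuinely diverge is the identification $\gamma_t=\delta_{\mathbf 0}$. The paper argues at the level of the \emph{support}: it first characterizes the stationary states of the frozen generator (Proposition \ref{prop:stationary_BD}: all fluxes $J_n$ must vanish, forcing $\overline q_n=Q_nc^n\overline q_0$, non-summable for $c>\rho$ unless $\overline q_0=0$), and then proves a general support lemma (Lemma \ref{stationary_measure}: a measure annihilating $\int f\,J_n\,d\pi$ for all $f\in\overline\Gc$ is carried by $\{J_n=0\}$), concluding that $\gamma_t$ is supported on $\{\mathbf 0\}$. You instead test the stationarity identity against cut-off linear functionals to obtain $\int J_n(\mu_t,q)\,\gamma_t(dq)=0$ directly, which gives the recursion $m_{n+1}=(a_nc_t/b_{n+1})m_n$ on the first moments $m_n=\int q_n\,\gamma_t(dq)$; since $\Gamma\in\Yc$ guarantees $\int \lVert q\rVert_{\ell_1}\,\gamma_t(dq)<\infty$ for a.e.\ $t$ while $Q_nc_t^n\to\infty$ when $c_t>\rho$, this forces $m_0=0$, hence all $m_n=0$ and $\gamma_t=\delta_{\mathbf 0}$ by non-negativity (in the case $r_a<r_b$ one gets $m_n=0$ for each $n$ with no recursion needed). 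This moment argument is more elementary and avoids the support lemma entirely, at the price of using the $\ell_1$-integrability of $\gamma_t$ in an essential way and of giving less information in the degenerate cases 2 and 3 of Proposition \ref{prop:stationary_BD}, where the paper's support lemma still localizes $\gamma_t$ on the continuum of stationary states even though uniqueness fails. Two cosmetic remarks: the de-nucleation rate $\beta q_0$ does not appear in $\widetilde\Hc$ (it is of lower order after the $\veps^{1-r}$ rescaling), so it plays no role in the uniqueness step; and the paper dispenses with your quadratic-variation estimate by noting that the limit of $\veps^{1-r}\Oc^{\veps,g}_t$ is a continuous martingale of bounded variation, hence zero --- but your direct estimate also goes through.
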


\begin{remark}
This theorem tells us that the limit $\Gamma$ is the product of a probability measure on $\ell_1^+$ and the Lebesgue measure on $\Rb_+$. The probability measure is a stationnary measure associated to the limit generator $\widetilde \Hc$. So we are able to completely identify $\Gamma$ only when we can ensure the operator $\widetilde \Hc$ has a unique stationary measure. This operator is connected to a constant-particle Becker-D\"oring system. If we investigate the stationary solutions of the generator through its dynamics, there are two cases: either the time-dependent solution trends to an equilibrium or the solution escapes to infinity (larger and larger clusters are formed), see for instance \cite{Ball1986,Ball1988,Carr1999}. Surprisingly, we cannot identify the stationary measure in the first case since the equilibrium is parametrized by the total number of clusters, which is unknown here (it is \textit{not} $\lb \mu_t,\indic{}\rb$). It provides an infinity of stationary solutions and one can show  (see \ref{ssec:det_BD}) that the support of the stationary measure belongs to the set of all possible stationary solutions and not only one. 
On the other hand, when there is no equilibrium, the solution vaguely converges to the unique zero-solution which provides an identification between the stationary measure and the long time solution of the Becker-D\"oring system. In this case, $p^\veps_n$, for a fix $n$, which is a very small cluster, goes to $0$ when $\veps$ goes to $0$. In contrary, larger and larger (in $n$) clusters are formed, which induces at the limit clusters of size $x>0$. This is the case when we have an identifiable boundary condition.
\end{remark}
 
Let us introduce some lemmas before the proof of Theorem \ref{prop:cv_bord}. First, note the Fr\'echet derivative of a function in $\Gc$ can be expressed in a simpliest way. By definition, for any $g\in\Gc$, there exists an integer $N$ and a function $G$ in $\Cc^2( \Rb^N) $ such that $g(\vphi) = G(\vphi_0,\ldots,\vphi_{N-1})$, for any $\vphi\in\ell_1$. Hence, the Fr\'echet derivative $Dg$ is, for all $q$ in $\ell_1^+$ and $\vphi$ in $\ell_1$,

\begin{equation} \label{eq:frechet}
 Dg[q](\vphi)=\sum_{n=0}^N \partial_n \, G (q_1, \dots, q_N) \ \vphi_n\,, 
\end{equation}
and so $Dg[q](1_n)=\indic{0\leq n\leq N}\ \partial_n \, G (q_1, \dots, q_N)$. This shows that the generator $\widetilde \Hc$ is well-defined on $\Gc$ since the sum is actually finite.

We now state a lemma on the convergence of the generators $\veps^{(1-r)} \widetilde \Hc^{\veps}$ to $\widetilde \Hc$ along the processes $p^\veps$ and $\mu^\veps$.%

\begin{lemma}\label{lem:reste_gene_bord}
 Under the same assumptions as Theorem \ref{prop:cv_bord}, we have, for all $T>0$ and $g$ in $\Gc$,
 $$\lim_{\veps \rightarrow 0}\Eb\left[\sup_{t\in\left[0,T\right]} \Bigg\vert\int_0^{t}\veps^{(1-r)}[\widetilde \Hc^{\veps}g ](\mu_s^{\veps},p^{\veps}_s)- [\widetilde \Hc g ](\mu_s^{\veps},p^{\veps}_s)\,ds\Bigg\vert\right]=0\,.$$
\end{lemma}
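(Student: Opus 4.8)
The goal is to show that the rescaled generator $\veps^{(1-r)}\widetilde\Hc^\veps g$ converges to $\widetilde\Hc g$ uniformly in time (in expectation), along the processes $(\mu^\veps,p^\veps)$. First I would fix $g \in \Gc$, say $g(\vphi) = G(\vphi_0,\dots,\vphi_{N-1})$ with $G \in \Gc_N$, and perform a second-order Taylor expansion of each increment $g(q \pm \veps\,\mathrm{stuff}) - g(q)$ appearing in $\widetilde\Hc^\veps$. Since every shift in the generator moves at most two coordinates by $\pm\veps$, each increment equals $\veps\, Dg[q](\cdot) + \tfrac{\veps^2}{2} D^2g[q](\cdot,\cdot) + O(\veps^3)$, with constants controlled uniformly by $\|G\|_{C^2}$ and by the fact that $\partial_n G$ is compactly supported (so only finitely many $n$ contribute). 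Dividing by $\veps$ as in the definition of $\widetilde\Hc^\veps$ and then multiplying by $\veps^{(1-r)}$, the leading ($\veps^0$ after the division) terms of the aggregation and fragmentation sums carry a prefactor $\veps^{-(1-r_a)}$ resp. $\veps^{-(1-r_b)}$; after multiplication by $\veps^{1-r}$ with $r = \min(r_a,r_b)$, exactly the sub-generator(s) with the slowest scale survive (giving the fluxes $J_n$ in \eqref{def:BD_fluxes}), while the others carry a strictly positive power of $\veps$ and vanish. The nucleation/de-nucleation terms (the first two lines of $\widetilde\Hc^\veps$) are $O(1)$ before multiplication, hence $O(\veps^{1-r})$ after, and also vanish.

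**Key steps in order.** (1) Write $\veps^{(1-r)}\widetilde\Hc^\veps g(\nu,q) - \widetilde\Hc g(\nu,q)$ as a finite sum of error terms of three types: Taylor remainders of order $\veps$ (from the $\veps^2/2$ second-order term divided by $\veps$, times the surviving $\veps^0$ scale, i.e. $O(\veps^{?})$ — precisely $O(\veps^{1-|r_a-r_b|})$ or better, still $\to 0$ since $|r_a - r_b|$ is irrelevant once we track that the \emph{leading} order matches); the cleanest bookkeeping is to note the difference is $O(\veps^{\theta})$ pointwise for some $\theta>0$ uniform on bounded sets, with coefficients that are affine in the ``mass'' $c = m^\veps - \brak{\nu}{\Id}$ and in $q_n$ for $n \le N+1$; (2) replace $c = m^\veps - \brak{\nu}{\Id}$ by $m - \brak{\nu}{\Id}$, picking up an extra error proportional to $|m^\veps - m|$ times bounded quantities, which $\to 0$ since $m^\veps \to m$; (3) replace $a^\veps(\veps(n+2))/\veps^{r_a}$ by $a_n$ and $b^\veps(\veps(n+2))/\veps^{r_b}$ by $b_n$ using \eqref{def:an,bn} and \eqref{H6}, again only finitely many $n$; (4) bound everything along the process: integrate over $[0,t]$, take $\sup_{t \in [0,T]}$ and then $\Eb$, using that the surviving coefficients are affine in $\brak{\mu_s^\veps}{\Id}$ and in $p^\veps_{n,s} = \mu_s^\veps(\{\veps(n+2)\}) \le \brak{\mu_s^\veps}{\indic{}}$, and that by Proposition \ref{prop:moment} both $\sup_{s\le T}\brak{\mu_s^\veps}{\Id}$ and $\sup_{s\le T}u^\veps(s)$ are $\Eb$-bounded uniformly in $\veps$, while $\Eb[\sup_{s\le T}\brak{\mu_s^\veps}{\indic{}}]$ is bounded by \eqref{eq:mu,1}. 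Multiplying the uniform moment bounds by the vanishing powers of $\veps$ (and by $|m^\veps-m|$, $\sup_{n\le N+1}|a^\veps(\veps(n+2))/\veps^{r_a} - a_n|$, etc.) and passing to the limit $\veps \to 0$ by dominated convergence gives the claim.

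**Main obstacle.** The only genuinely delicate point is the careful bookkeeping of the powers of $\veps$: one must check that after dividing the increments by $\veps$ and multiplying by $\veps^{1-r}$, the first-order (Fréchet-derivative) contributions of the sub-generator(s) attaining the minimal exponent $r$ survive \emph{exactly}, matching $\widetilde\Hc g$ term by term, while (a) the first-order contributions of the non-minimal sub-generator retain a factor $\veps^{r_a - r_b}$ or $\veps^{r_b - r_a}$ with positive exponent, hence vanish, and (b) all second-order Taylor remainders retain at least one extra power of $\veps$. In the borderline case $r_a = r_b = r$ both flux pieces survive and combine into $J_n = a_n c q_n - b_{n+1} q_{n+1}$, which must be verified to be what the telescoping sum $\sum_n Dg[q](1_{n+1}-1_n)J_n$ produces — this is precisely the discrete ``integration by parts'' identity already recorded in \eqref{gene-H}. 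Once the exponent accounting is pinned down, the passage to the limit is a routine application of the uniform moment estimates of Section \ref{sec:technical} together with dominated convergence, exactly as in the proof of Lemma \ref{lem_conv_tronque}.
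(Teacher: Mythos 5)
Your proposal is correct and follows essentially the same route as the paper: a second-order Taylor expansion of the increments of $g$ (legitimate since $g$ depends on finitely many coordinates through a $\Cc^2$ function with compactly supported gradient, so only finitely many $n$ contribute), a term-by-term accounting of the powers of $\veps$ showing that after multiplication by $\veps^{(1-r)}$ only the sub-generator(s) at the minimal scale $r=\min(r_a,r_b)$ survive while the nucleation/de-nucleation terms, the non-minimal sub-generator and the Taylor remainders all carry a strictly positive power of $\veps$, followed by the coefficient convergence \eqref{def:an,bn}, the bound $p^\veps_{n,s}\leq \brak{\mu_s^\veps}{\indic{}}$ and the uniform moment estimates of Proposition \ref{prop:moment}. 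Your explicit step replacing $m^\veps$ by $m$ is a detail the paper leaves implicit, but otherwise your decomposition matches the paper's splitting into the terms $I_{1,s}^\veps$, $I_2^\veps(n)$ and $I_3^\veps(n)$.
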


\begin{proof} We start with the case $r=r_a=r_b$ so that the fluxes $J_n$ in \eqref{def:BD_fluxes}  are in the more general form. Let us fix $T>0$ and $g$ in $\Gc$. Remark first that, thanks to \eqref{eq:frechet} and by Taylor's theorem, there exists a positive constant $K_g$ such that for all $q$ in $\ell_1^+$ and $\vphi$ in $\ell_1$, we have the following bounds
 \begin{align*}
 \quad \vert Dg[q](\vphi) \vert \leq K_g\|\vphi\|_{\ell_1}, \quad \Big\vert \ds \frac{g(q+ \veps \vphi) - g(q)}{\veps}- &\,Dg[q](\vphi) \Big\vert
 \leq K_g\, \veps\|\vphi\|^2_{\ell_1}\,,
 \end{align*}
and, therefore
\[ \Big\vert \ds \frac{g(q+ \veps \vphi) - g(q)}{\veps} \Big\vert
 \leq K_g \|\vphi\|_{\ell_1}(1+\veps \|\vphi\|_{\ell_1})\,.\]
%
%
%
From the definition of $\widetilde \Hc^{\veps}$ and $\widetilde \Hc$ it readily follows that for all $s\in[0,T]$

\begin{equation*}
 \left\vert \veps^{(1-r)} [\widetilde \Hc^{\veps}\, g](\mu_s^{\veps},p^{\veps}_s)-[\widetilde \Hc\, g](\mu_s^{\veps},p^{\veps}_s) \right\vert \leq I_{1, s}^\veps + \sum_{n =0}^N  I_2^\veps(n)p_{n,s}^{\veps} + \sum_{n = 1}^N  I_3^\veps(n)p_{n,s}^{\veps} \,,
\end{equation*}
with
\begin{equation*}
 \begin{array}{l}
  \ds I_{1, s}^\veps =  \veps^{(1-r)}K_g (1+\veps)(\vert \alpha^\veps u^{\veps}(s)^2\vert+ \vert \beta^\veps p_{0,s}^{\veps}\vert ) \,,\\[0.8em]
  \ds I_2^\veps(n) =   \Bigg\vert\frac{g(p_{n,s}^{\veps}+ \veps (\indic{n+1}-\indic{n})) - g( p_{n,s}^{\veps})}{\veps}  \frac{a^{\veps}(\veps(n+2))}{\veps^{r_a}}   u^{\veps}(s) 
  \ds -Dg[p_{n,s}^{\veps}](1_{n+1}-1_n) a_n u^\veps(s)  \Bigg\vert \,,\\[0.8em]
  \ds I_3^\veps(n) =  \Bigg\vert\frac{g(p_{n,s}^{\veps}- \veps (\indic{n}-\indic{n-1})) - g( p_{n,s}^{\veps})}{\veps} \frac{b^{\veps}(\veps(n+2))}{\veps^{r_b}}  -Dg[p_{n,s}^{\veps}](1_{n-1}-1_n) b_n\Bigg\vert \,.
 \end{array}
\end{equation*}
%
%
Using the constant $K_0$ derived in \eqref{eq:k_c_less_Kk} and \eqref{eq:b2_mu,2_bound} we get

\[  \esp{\sup_{s\in[0,T]} I_{1, s}^\veps} \leq \veps^{(1-r)}K_g 2K_0\,. \]
Then, for all $n$ in $\Nb$, we have $I_2^\veps(n) \leq {I_2^\veps}'(n) + {I_2^\veps}''(n)$ with
\begin{align*}
&{I_2^\veps}'(n) =  \Bigg\vert\frac{g(p_{n,s}^{\veps}+ \veps (\indic{n+1}-\indic{n})) - g( p_{n,s}^{\veps})}{\veps}  -Dg[p_{n,s}^{\veps}](1_{n+1}-1_n)  \Bigg\vert\quad \frac{a^{\veps}(\veps(n+2))}{\veps^{r_a}} u^{\veps}(s)  \, ,\\
& {I_2^\veps}''(n) = \hphantom{\Bigg\vert}  \big\vert Dg[p_{n,s}^{\veps}](1_{n+1}-1_n) \big \vert\,  \Bigg\vert\frac{a^{\veps}(\veps(n+2))}{\veps^{r_a}}-a_n \Bigg\vert  u^{\veps}(s) \, .
 \end{align*}
Remark that, by \eqref{def:an,bn}, for each $n$ it exists $K_{a,n}$ such that $a^{\veps}(\veps(n+2))/\veps^{r_a}\leq K_{a,n}$. Thus, denoting by $K_1$ the bound on $u^\veps$ in \eqref{eq:C_uni_bound}, we then end up with the bound
\begin{equation*}
 I_2^\veps(n) \leq 2 K_g K_1\Bigg(\veps K_{a,n} + \Bigg\vert\frac{a^{\veps}(\veps^{\beta}(n+2))}{\veps^{r_a\beta}}-a_n \Bigg\vert \Bigg)\,.
\end{equation*}
In a similar way, we obtain for $n \geq 1$
\begin{equation*}
I_3^\veps(n) \leq 2 K_g \Bigg( \veps K_{b,n} +  \Bigg\vert\frac{b^{\veps}(\veps^{\beta}(n+2))}{\veps^{r_b\beta}} -b_n \Big\vert\Bigg)\,.
\end{equation*}
Finally, from the above estimates and using that $p^\veps_{n,s} \leq \lb \mu_s^\veps, \indic{} \rb$ (by definition), 
%
%
the convergence of the $a^{\veps}$'s and $b^{\veps}$'s in \eqref{def:an,bn}, and by the moment estimates in Proposition \ref{prop:moment}, it concludes the proof for $r_a=r_b$. In the other cases, the proof is similar.
\end{proof}
Before proving Theorem \ref{prop:cv_bord} and more precisely in order to use the convergence of $(\mu^{\veps}, \Gamma ^{\veps})$ towards $(\mu, \Gamma)$, a last lemma of continuity is necessary.

\begin{lemma}\label{lemma_continuity_Hg}
For all $g$ in $\Gc$ and all $t\geq 0$, the function \[(\nu, \Theta) \mapsto \int_{[0,t]\times\ell^+_1} [\tilde \Hc \, g](\nu_s,q) \Theta(ds\times dq)\] is continuous at any point of $\Cc(\Rb_+, w-\Xc)\times \Yc$.
\end{lemma}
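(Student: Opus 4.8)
The plan is to reduce the statement to two elementary facts: for $g\in\Gc$ the operator $\widetilde\Hc g$ depends on $q$ only through finitely many coordinates, and on $\nu$ only through the $w$-continuous functional $\langle\nu,\Id\rangle$; and its growth in $q$ is at most linear in $\lVert q\rVert_{\ell_1}$, which is exactly the growth that the definition of $\Yc$ and of the $w^\#$-topology (see \ref{app:space_Y}) is built to integrate against. I will treat the generic case $r_a=r_b<1$, where the fluxes $J_n$ in \eqref{def:BD_fluxes} have their most general form; the two other cases are strictly simpler (in the case $r_b<r_a$ the operator does not even depend on $\nu$).

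First I would make the dependence explicit. Write $g(\vphi)=G(\vphi_0,\ldots,\vphi_{N-1})$ with $G\in\Gc_N$. By \eqref{eq:frechet}, $Dg[q](1_n)=0$ for $n\geq N$, so the sum in \eqref{gene-H} truncates and
\[
[\widetilde\Hc g](\nu,q)=\sum_{n=0}^{N-1}\bigl(\partial_{n+1}G-\partial_nG\bigr)(q_0,\ldots,q_{N-1})\,\bigl(a_n(m-\langle\nu,\Id\rangle)q_n-b_{n+1}q_{n+1}\bigr),
\]
with the convention $\partial_NG:=0$. Since each $\partial_nG$ is compactly supported, there is $R_g>0$ such that $\partial_nG(q_0,\ldots,q_{N-1})=0$ unless $q_0,\ldots,q_{N-1}\leq R_g$; hence on the support of each factor every coordinate occurring in $J_n$ is bounded by $R_g$ \emph{except} the single coordinate $q_N$, which appears only in the term $b_N\,\partial_{N-1}G(q_0,\ldots,q_{N-1})\,q_N$ with a \emph{bounded} coefficient. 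Consequently, for any $\nu\in\Cc(\Rb_+,w-\Xc)$ and $t\geq0$ there is a constant $C_g$ — depending on $g$, $m$, and $\sup_{s\leq t}\lvert\langle\nu_s,\Id\rangle\rvert$, which is finite because $s\mapsto\langle\nu_s,\Id\rangle$ is continuous ($\langle\cdot,\Id\rangle$ being $w$-continuous on $\Xc$, see \ref{app:space_X}) — such that $\sup_{s\leq t}\lvert[\widetilde\Hc g](\nu_s,q)\rvert\leq C_g(1+\lVert q\rVert_{\ell_1})$ for all $q\in\ell_1^+$, and $(s,q)\mapsto[\widetilde\Hc g](\nu_s,q)$ is continuous on $[0,t]\times(\ell_1^+,v)$, since $s\mapsto\langle\nu_s,\Id\rangle$ is continuous and each coordinate map $q\mapsto q_n$ is vaguely continuous (test against $1_n\in\ell_0$). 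I would also record a ``Lipschitz in $\nu$'' bound: the $\nu$-dependence enters only through the gain fluxes $a_n(m-\langle\nu,\Id\rangle)q_n$ with $n\leq N-1$, whose $q$-factor $q_n$ is $\leq R_g$ on the support of the relevant partial of $G$, so there is $L_g>0$ with $\sup_{q\in\ell_1^+}\lvert[\widetilde\Hc g](\nu_s,q)-[\widetilde\Hc g](\tilde\nu_s,q)\rvert\leq L_g\,\lvert\langle\nu_s,\Id\rangle-\langle\tilde\nu_s,\Id\rangle\rvert$.

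With these preliminaries, I would prove sequential continuity at $(\nu,\Theta)\in\Cc(\Rb_+,w-\Xc)\times\Yc$. Let $(\nu^k,\Theta^k)\to(\nu,\Theta)$ in $\Dc(\Rb_+,w-\Xc)\times(\Yc,w^\#)$. Since the Skorohod limit $\nu$ is continuous, $\nu^k\to\nu$ locally uniformly, so $\delta_k:=\sup_{s\leq t}\lvert\langle\nu^k_s,\Id\rangle-\langle\nu_s,\Id\rangle\rvert\to0$. Split
\[
\int_{[0,t]\times\ell_1^+}[\widetilde\Hc g](\nu^k_s,q)\,\Theta^k(ds\times dq)-\int_{[0,t]\times\ell_1^+}[\widetilde\Hc g](\nu_s,q)\,\Theta(ds\times dq)=A_k+B_k,
\]
with $A_k=\int_{[0,t]\times\ell_1^+}\bigl([\widetilde\Hc g](\nu^k_s,q)-[\widetilde\Hc g](\nu_s,q)\bigr)\,\Theta^k(ds\times dq)$ and $B_k=\int_{[0,t]\times\ell_1^+}[\widetilde\Hc g](\nu_s,q)\,(\Theta^k-\Theta)(ds\times dq)$. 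By the Lipschitz bound and $\Theta^k([0,t]\times\ell_1^+)=t$ (definition of $\Yc$), $\lvert A_k\rvert\leq L_g\,\delta_k\,t\to0$. For $B_k$, the \emph{fixed} integrand $\psi(s,q):=[\widetilde\Hc g](\nu_s,q)$ is continuous on $[0,t]\times(\ell_1^+,v)$ with $\lvert\psi(s,q)\rvert\leq C_g(1+\lVert q\rVert_{\ell_1})$, hence an admissible test function for the $w^\#$-topology, so $B_k\to0$ by definition of $\Theta^k\to\Theta$ in $(\Yc,w^\#)$. This yields the claimed continuity.

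The main obstacle is precisely that $\widetilde\Hc g$ is \emph{unbounded} in $q$, so plain weak convergence of the $\Theta^k$ would not suffice; the crux is to check (i) that the only source of unboundedness is the single ``out-of-window'' coordinate $q_N$ entering linearly and multiplied by a bounded derivative of $G$, so that $\psi$ genuinely belongs to the enlarged, $\lVert q\rVert_{\ell_1}$-growth test class handled by the $w^\#$-topology, and (ii) that the $\langle\nu,\Id\rangle$-dependence only multiplies coordinates $q_n$ that are bounded on the compact support of the partials of $G$, which is what makes the error term $A_k$ controllable uniformly in $q$.
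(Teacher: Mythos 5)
Your proof is correct and follows essentially the same route as the paper's: the same two-term decomposition, with the $\Theta$-convergence term handled by the fact that $[\widetilde \Hc g](\nu_\cdot,\cdot)$ is a finite sum of terms $q_j$ times bounded continuous functions, i.e.\ lies in the $\lVert q\rVert_{\ell_1}$-weighted test class built into the $w^\#$-topology. The only cosmetic difference is in the error term $A_k$: you exploit the compact support of the $\partial_n G$ to get a bound uniform in $q$, whereas the paper keeps a factor $\sum_{n\leq N} q_n$ and integrates it against $\Theta^{\veps}$; both work.
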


\begin{proof}
Let us fix $g$ in $\Gc$, $t \geq 0$ and a point $ (\nu, \Theta) $ of $\Cc(\Rb_+, w-\Xc)\times \Yc$. For any sequence $ (\nu^{\veps}, \Theta^{\veps}) $ converging to  $ (\nu, \Theta) $ when $\veps$ goes to $0$, we have
\begin{multline}\label{continuity_Hg}
\Bigg\vert  \int_{[0,t]\times\ell^+_1} [\widetilde \Hc \, g](\nu_s^{\veps},q) \Theta^{\veps}(ds\times dq) -  \int_{[0,t]\times\ell^+_1} [\widetilde \Hc \, g](\nu_s,q) \Theta(ds\times dq)\Bigg\vert\hfill \\
 \leq  \Bigg\vert  \int_{[0,t]\times\ell^+_1} [\widetilde \Hc \, g](\nu_s,q) \Theta^{\veps}(ds\times dq) -  \int_{[0,t]\times\ell^+_1} [\widetilde \Hc \, g](\nu_s,q) \Theta(ds\times dq)\Bigg\vert \\
 \hfill + \int_{[0,t]\times\ell^+_1}\Big\vert  [\widetilde \Hc \, g](\nu_s^{\veps},q)-[\widetilde \Hc \, g](\nu_s,q) \Big\vert\Theta^{\veps}(ds\times dq) \, .
\end{multline}
The convergence of the first term on the right-hand side of \eqref{continuity_Hg} to $0$ is due to the convergence of $\Theta^{\veps}$ to $\Theta$ in $\Yc$ for the $weak^\#$ topology  (see Appendix) since we have the bound

\[\vert[\widetilde \Hc \, g](\nu_s,q) \vert \leq 2 K_g \Big( (\sup_{0\leq n\leq N} a_n) (m + \sup_{s \in [0,t]} \lb \nu_s, \Id \rb )  +\sup_{1\leq n\leq N+1}  b_n\Big) \sum_{n=0}^{N+1} q_n\,,\]
where the  constant $K_g$ is the same as in the previous proof.

Consider now the second term. For all $n\in \Nb$, $s\in[0,t]$ and $q$ in $\ell_1^+$, the following bound on the flux $J_n$ is obtained
\[ \vert J_n(\nu_s^{\veps},q) - J_n(\nu_s,q)\vert \leq a_n \vert \lb \nu_s^{\veps}, \Id \rb -\lb \nu_s, \Id \rb \vert\, q_n\,.\]
Therefore, we get
\[ \Big\vert  [\widetilde \Hc \, g](\nu_s^{\veps},q) -  [\widetilde \Hc \, g](\nu_s,q)\Big\vert \leq 2 K_g \Big(\sup_{0\leq n\leq N} a_n \Big) \Big(\sup_{s \in [0,t]} \vert \lb \nu_s^{\veps}, \Id \rb -\lb \nu_s, \Id \rb \vert\Big)\sum_{n=0}^{N} q_n\,.\]
This inequality in particular shows the continuity of the map $ \tilde \nu \mapsto  [\tilde \Hc \, g](\tilde \nu,q)$ for all $g$ in $\Gc$ and $q$ in $\ell_1^+$ and gives the convergence to $0$ of the second term on the right-hand side in \eqref{continuity_Hg} when $\veps$ goes to $0$. The result is proved. \end{proof}
We are now in position to identify the limit $\Gamma$ and prove Theorem \ref{prop:cv_bord}. 

\begin{proof}[Proof of Theorem \ref{prop:cv_bord}]
We may rewrite \eqref{eq_on_H} as 
\begin{align*}
  \veps^{(1-r)} \Oc^{\veps,g}_t & =\veps^{(1-r)} ( g( p^\veps_t)  - g(p^\veps_0)) - \int_0^t [\widetilde \Hc\, g](\mu^\veps_s,p^\veps_s) - e^\veps_t \,,\\
  & \phantom{:}=\veps^{(1-r)} ( g( p^\veps_t)  - g(p^\veps_0))  - \int_{[0,t]\times\ell^+_1} [\widetilde \Hc\, g] (\mu^\veps_s,q) \Gamma^\veps(ds\times dq) - e^\veps_t\,,
\end{align*}
 with
 \[e^\veps_t = \int_0^{t}\veps^{(1-r)}[\widetilde\Hc^{\veps} u](\mu_s^{\veps},p^{\veps}_s)- [\widetilde \Hc\, g](\mu_s^{\veps},p^{\veps}_s)\,ds \,.\]
 Thus, for all $T>0$,  we have by Lemma \ref{lem:reste_gene_bord} that $\esp{\sup_{t\in[0,T]} \lvert e^\veps_t\rvert} \to 0$ when $\veps \to 0$. We may check that the limit %
 \[\int_{[0,t]\times\ell^+_1} [\widetilde \Hc \, g](\mu_s,q) \Gamma(ds\times dq)\,, \]
obtained by Lemma \ref{lemma_continuity_Hg}, is a martingale, which is continuous and of bounded variations and hence must be constant, in fact equal to $0$. 
 Thus, for each $g\in \Gc$ and $t\geq 0$ 
 \[\int_{[0,t]\times\ell^+_1}[\widetilde \Hc \, g](\mu_s,q) \Gamma(ds\times dq) =0\,, \quad a.s.\]
 Using \cite[Lemma 1.4]{Kurtz1992} with a slight adaptation along the proof, it exists $(\gamma_t)_{t\geq0}$ a $\Pc(\ell_1^+)$-valued optional process, such that  for all $t\geq 0$ and  $B\in \Bc(\ell_1^+)$ 
 \[ \Gamma([0,t]\times B ) = \int_{[0,t]} \gamma_s(B) ds\,, \quad a.s. \]
 and since the functions $q \in \ell_1^+ \mapsto q_i$ are $\Gamma$-integrable it readily follows that for all $t\geq 0$ and  $g\in\Gc$
  \[\int_{[0,t]\times\ell^+_1} [\widetilde \Hc \, g](\mu_s,q) \gamma_s(dq) ds = 0,\quad a.s. \]
 Hence, by separability of $\Gc$ (as $\Cc_c^1(\Rb^n)$ is separable), with probability one, we have 
  \[\int_{\ell^+_1}[\widetilde \Hc \, g](\mu_t,q) \gamma_t(dq) =0,\quad a.e \ t \geq 0  \text{ and } \forall g\in\Gc\,. \]
Then, thanks to Proposition \ref{prop:mean_stationary_BD} in Appendix, for a fixed $\nu\in\Xc$ such that $c=m-\langle \nu, \Id\rangle>\rho$, the operator $[\tilde \Hc \cdot](\nu,\cdot)$ has a unique stationary distribution $\pi_{\nu}=\delta_{\mathbf{0}}$ in $\Pc(v-\ell_1^+)$, the Dirac measure at the null sequence in $\ell_1^+$, \ie~  satisfying
  \[\int_{\ell^+_1}[\widetilde \Hc g](\nu,q) \pi_{\nu}(dq)  = 0\,, \quad \forall g \in \Gc \,.\]
 Therefore, on a time interval $[ t_0, t_1 ] $ such that $u(s)= m-\langle \mu_s, \Id\rangle>\rho$, we can conclude that the process $(\gamma_s)_{s \in [t_0,t_1]}$ is deterministic and equals to $\delta_{\mathbf{0}}$. This proves the result.
 \end{proof}

\begin{proof}[Proof of Theorem \ref{thm:LS_convweak}]
The proof readily follows from Theorem \ref{prop:cv_bord} combining to Theorem \ref{thm:SBD_rescale_limit}. 
\end{proof}

\section{Discussion} \label{sec:disc}

The link between the discrete size Becker-D\"oring  model and the continuous size Lifshitz-Slyozov model has already been studied within the context of deterministic model by \cite{Collet2002,Laurencot2002a}. We used a similar approach to those previous studies, in the sense that we have introduced a scaling parameter, linked to the initial number of particles, and investigated the limit when this scaling parameter tends to zero. The main difference is that in both studies \cite{Collet2002,Laurencot2002a}, the authors obtained convergence results in a vague topology, that is, the topology of convergence against compactly supported test functions. The authors in \cite{Laurencot2002a} were able to extend the convergence in the space of integrable densities against $xdx$, which do not see the boundary as well (as the weight $x$ vanishes at the boundary $x=0$). Thus, these results were restricted in practice to cases where the characteristics exit the domain (for which well-posedness do not require specification of the boundary term).

Concerning the regularity imposed on the rate coefficients, we essentially have the same hypotheses as \cite{Laurencot2002a}. However, our choice of scalings slightly differs. In both studies \cite{Collet2002,Laurencot2002a}, only the nucleation rate is slowing down compared to other aggregation rates; this condition being crucial to prevent explosion, mainly to get the crucial moment estimates in Proposition \ref{prop:moment}. In our case, we also need to slow down the de-nucleation rate, which allows us to obtain time control in Proposition \ref{prop:tight_weak}. Such a part has only been  proved for compactly supported functions in the previous works \cite{Collet2002,Laurencot2002a}, without the extra scaling of the de-nucleation rate. We interpret physically our extra-condition as being asymptotically an irreversible nucleation hypothesis. We conjecture that the reversible nucleation case can still be managed by a similar strategy (with more involved compactness estimates), and will yield a different boundary condition. 

Let us now illustrate our results and point future research directions with the help of numerical simulations. The discrete stochastic Becker-D\"oring model (Definition \ref{def:rescaleSBD}) can be simulated using stochastic simulation algorithms (SSA, or Gillespie algorithm). Thus, the stochastic trajectories can be compared to numerical solution of the deterministic limit problem obtained in our main Theorem \ref{thm:LS_convweak}. 

{\bf 1.} We illustrate in Figure \ref{fig1} the perfect match between the stochastic and the deterministic numerical solutions, as long as $\veps$ is small enough.

{\bf 2.} The stochastic trajectories may be used to study relevant first passage times. For instance, starting with only $M=\frac{1}{\veps^2}$ free particles, we can compute the time needed to reach a given large size of order $\frac{1}{\veps}$. Indeed, using our scaling, in Figure \ref{fig2} we see that the mean time to reach this size is of order $\frac{1}{\veps}$, as $\veps \to 0$. 

{\bf 3.} We show in Figure \ref{fig3} that the stochastic model may deviate strongly from the limit deterministic model, for small but positive $\veps$, although it does not contradict our limit theorem. Indeed, using coefficient rates such that the characteristics exit the domain ($a(0)u(t)-b(0)<0$), the deterministic model predicts that the pure free particle initial condition is an equilibrium (if $u(0)=m$, then $u(t)\equiv m$ for all $m$). However, for $\veps>0$, after a stochastic (asymptotically very long) time, the stochastic model switch to a different phase, $u^\veps(t)$ being close to $0$ in the stochastic trajectories. We expect second order approximations and large deviation theory to explain such a behavior.


This work may have several applications. For instance, the rare protein assembly in neuro-degenerative diseases is being intensively modeled by aggregation-fragmentation models \cite{Prigent2012}. The understanding of the behavior of such models is thus a first necessary step towards deciphering the mechanism of the diseases (inside an organism or in \textit{in vitro} polymerization experiments). In such a context, deriving a discrete (and stochastic) aggregation-fragmentation model is appealing for its simplicity and is more intuitive. However, it has several drawbacks compared to continuous models. First, the time consuming stochastic simulations were the main limitation of the numerical exploration of the behavior of the stochastic Becker-D\"oring model in \cite{Yvinec2012} (see also in a deterministic context, \cite{Prigent2012}). Hence, deriving an approximate continuous limiting model is important for fast accurate numerical simulations of discrete Becker-D\"oring model, as standard, well-known and fast numerical schemes are widely available for continuous size-structured PDE models \cite{Banks2014}. 
\pagebreak
\newpage

\begin{figure}[!h]
\includegraphics{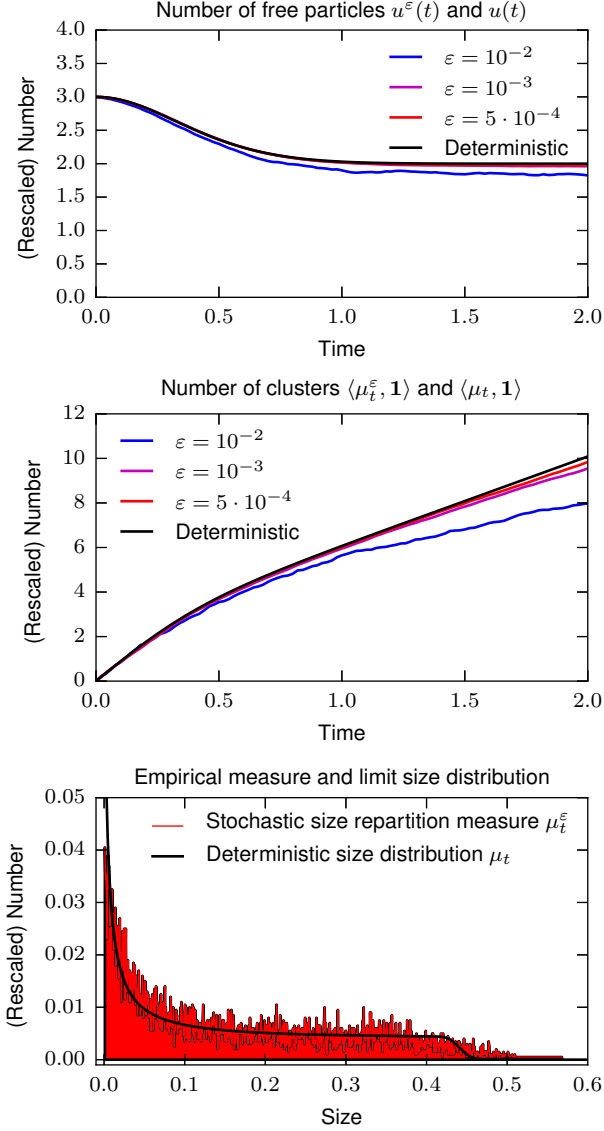} 
\caption{\textbf{Agreement between numerical simulations and our limit theorem.} We plot the time evolution of the (rescaled) number of free particles $u_t^{\veps}$ (top) and the total (rescaled) number of clusters $\brak{\mu_t^\veps}{\indic{}}$ (middle) for different $\veps$ (see legend), together with the deterministic solution of the moment equations obtained from the weak form of the LS equation \eqref{eq:weak_LS} (in black). Down, we plot one snaphshot at time $t=1$ of the measure-valued solution $\mu_t^\veps$ for $\veps=10^{-3}$, together with the numerical solution of the LS equation (standard upwind scheme). We used the scaling given in Section \ref{sec:rescale} with constant rate coefficients $a^\veps(x)\equiv 1$ and $b^\veps(x)\equiv 2$, $\alpha^\veps=1$, and $b^\veps=1$, under initially incoming characteristics, \it{i.e} $u(0)=m=3>\rho=\frac{b}{a}=2$.}
\label{fig1}
\end{figure}
\pagebreak

\begin{figure}[!h]
\includegraphics{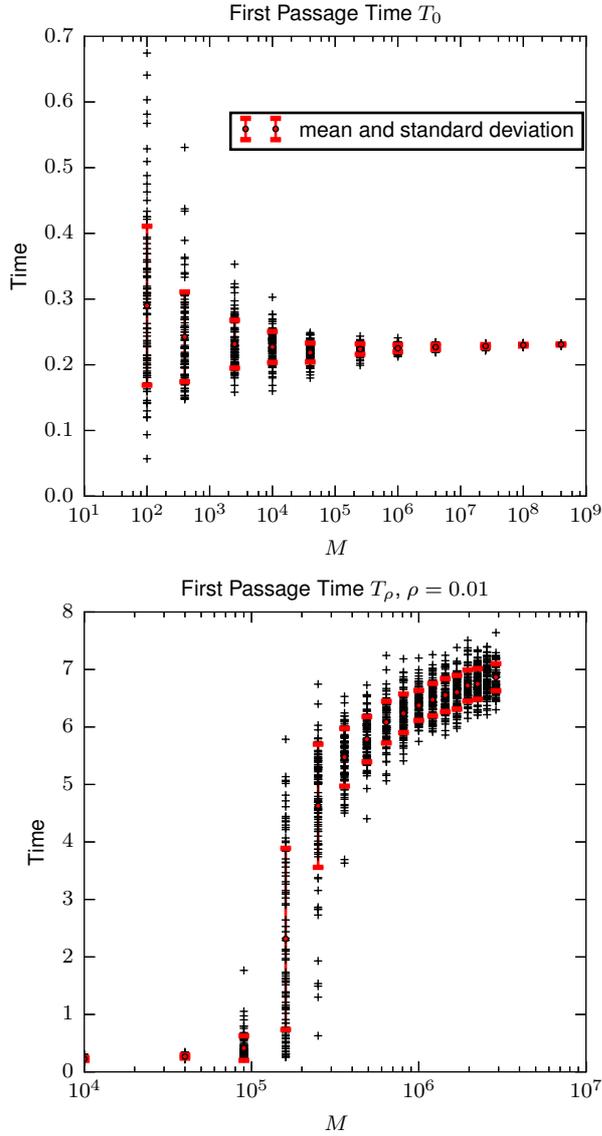} 
\caption{\textbf{The time scale of formation of large clusters follows the scaling of our limit theorem.} We run stochastic simulations of the original discrete Becker-D\"oring system given by the generator \eqref{eq:gene_BD_stoch}, for various initial mass $M=1/\veps^2$. We define the stopping time $T_0=\inf\{t\geq 0, C_{\lfloor 1/\veps \rfloor}(t)=1 \mid C_1(0)=1/\veps^2 \}$ and $T_{\rho}=\inf\{t\geq0, C_{\lfloor 1/\veps \rfloor}(t)=\lfloor\rho/\veps\rfloor \mid C_1(0)=1/\veps^2 \}$. The time $\veps T_1$ is reported in the first figure, and $\veps T_\rho$, for $\rho=0.01$, in the second, and both are plotted as a function of the initial mass $M$. We use kinetic coefficients  as follows: $a^\veps(x)=5\veps^2$, $b^\veps(x)=x$, $\alpha^\veps=\veps^4$, $\beta^\veps=\veps^2$. The fact that the two quantities converge as $M\to\infty$ is consistent with our results.}
\label{fig2}
\end{figure}
\pagebreak

\begin{figure}[!h]
\includegraphics{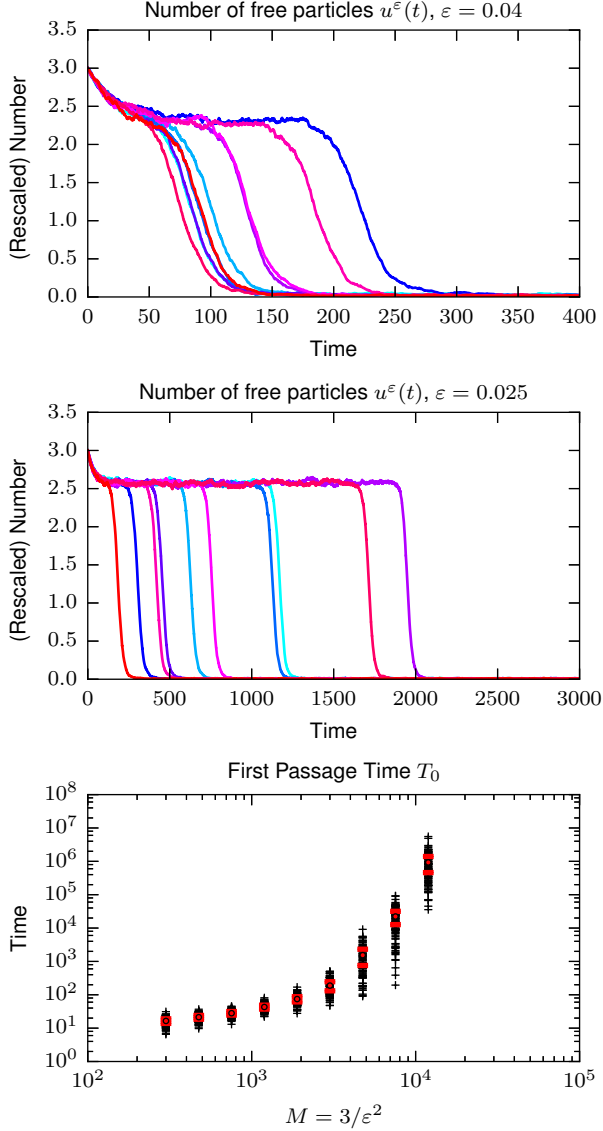} 
\caption{\textbf{Deviations from the deterministic limit.} We perform the same numerical simulations as in Figure \ref{fig1}, but using for rate coefficients $a^\veps(x)=x$ and $b^\veps(x)\equiv 1$, $\alpha^\veps=\beta^\veps=1$ and $m^\veps=3$. With these rate coefficients, note that the characteristics of the LS equation \eqref{eq:weak_LS} are always outgoing ($a(0)=0$ and $b(0)>0$). We start with a pure free particle initial condition, corresponding to $u^\veps(0)=m^\veps=3$. We plot the time evolution of the number of free particles $u_t^\veps$ for ten independent trajectories with $\veps=4 . 10^{-2}$ (top) and $\veps=2,5 .10^{-2}$ (middle). We observe that the numerical solutions largely differ from one to each other, mostly by the time at which the number of free particles drastically goes down. This time corresponds to the time a cluster of size greater than the critical size $X_c=1/u(t)$ has been formed (the size for which $a(x)u(t)-b(x)$ becomes positive). In contrast, the deterministic solution of the LS equation \eqref{eq:weak_LS} predicts a constant level of free particle $u(t)$. Down, we plot the realization of the stopping time $T_0=\inf\{t\geq 0, C_{\lfloor 1/\veps \rfloor}(t)=1 \mid C_1(0)=1/\veps^2 \}$, as in Figure \ref{fig2}. The latter seems to grow exponentially, as expected in large deviation theory.}
\label{fig3}
\end{figure}

\pagebreak

\newpage

\appendix

 \section{Topology and Compactness}
 
 In the sequel $E$ is a Polish space (a separable topological space which is completely metrizable) and we consider its underlying Borel $\sigma$-algebra $\Bc(E)$.

 \subsection{The space $\Xc$} \label{app:space_X}

 Let $(h_i)_{i\geq 1}$ be a countable sequence (possibly finite) of nonnegative real-valued measurable functions on $E$. We define
 
 \[ \Xc(E) : = \left\{ \vphantom{\sum_{i=1}^n} \nu \in \Mc_b(E)\, : \,  \lb \nu ,   h_i    \rb < +\infty\,, \ \forall i\geq1  \right\}  \,  \]
equipped with the weak topology, denoted by $(\Xc,w)$ or alternatively $w-\Xc$, \ie~the  coarsest topology that makes continuous $\nu \mapsto \lb \nu, \vphi\rb$ and $\nu \mapsto \lb \nu,  h_i \vphi\rb$ for all $\vphi\in\Cc_b(E)$ and $i\geq 1$. Remark, for all $i\geq 1$ and $\nu\in \Xc(E)$, we can define the density measure $h_i \cdot \nu \in\Mc_b(E)$ by $\lb h_i\cdot \nu ,\vphi \rb =  \lb  \nu ,h_i \vphi \rb$  for any $\vphi\in\Cc_b(E)$, see \cite[Chap. IX \S2.2]{bourbaki_int_chap9}.
  
 \begin{lemma} \label{lem:X_polish}
  The space $(\Xc(E),w)$ is a Polish space. Let $\rho_\Xc$ be defined, for any $(\nu,\mu)\in\Xc(E)\times\Xc(E)$, by
  \begin{equation*}
  \rho_\Xc(\nu,\mu) :=  \sum_{i\geq 0} 2^{-(i+1)}(  1\wedge \rho( h_i \cdot  \nu,  h_i  \cdot \mu)) \,,
  \end{equation*}
  where $\rho$ is the Prohorov metric on $\Mc_b(E)$ and the convention $h_0=\indic{}$.  Then, $\rho_\Xc$ is a complete metric equivalent to the weak topology on $\Xc(E)$.
 \end{lemma}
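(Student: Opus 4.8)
The plan is to identify $(\Xc(E),w)$ with a closed subspace of a countable product of copies of the Polish space $(\Mc_b(E),w)$ and to read off both assertions from there. Writing $h_0:=\indic{}$, introduce the map $T:\Xc(E)\to\prod_{i\geq0}\Mc_b(E)$, $T(\nu):=(h_i\cdot\nu)_{i\geq0}$; it is well defined precisely because $\lb\nu,h_i\rb<+\infty$ for $\nu\in\Xc(E)$ makes each $h_i\cdot\nu$ a finite measure, and it is injective because the coordinate $i=0$ returns $\nu$ itself. Since $\lb h_i\cdot\nu,\vphi\rb=\lb\nu,h_i\vphi\rb$ for all $\vphi\in\Cc_b(E)$, the topology $w-\Xc$ is, by its very definition, the initial topology on $\Xc(E)$ generated by the maps $\nu\mapsto h_i\cdot\nu$ into $(\Mc_b(E),w)$; hence $T$ is a homeomorphism onto its image when the product carries the product topology. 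I would then invoke the standard fact that, $E$ being Polish, $(\Mc_b(E),w)$ is Polish with the Prohorov metric $\rho$ as a complete metric; consequently $\prod_{i\geq0}(\Mc_b(E),w)$ is Polish, the metric $(\lambda,\mu)\mapsto\sum_{i\geq0}2^{-(i+1)}(1\wedge\rho(\lambda_i,\mu_i))$ is complete and induces the product topology, and $\rho_\Xc$ is exactly its pullback along $T$ — in particular $\rho_\Xc$ is a metric on $\Xc(E)$ inducing $w-\Xc$.

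The remaining work is to prove that $T(\Xc(E))$ is closed in the product, after which completeness of $\rho_\Xc$ and Polishness of $\Xc(E)$ follow, since a closed subspace of a Polish space is Polish and inherits a complete metric. Concretely, I would take a $\rho_\Xc$-Cauchy sequence $(\nu_n)_n$ in $\Xc(E)$; then for each $i\geq0$ the sequence $(h_i\cdot\nu_n)_n$ is $\rho$-Cauchy in $\Mc_b(E)$, hence converges weakly to some $\lambda_i\in\Mc_b(E)$, and I set $\nu:=\lambda_0$, so $\nu_n\to\nu$ weakly. Two points then need checking. First, $\nu\in\Xc(E)$: from the uniform bound $\sup_n\lb\nu_n,h_i\rb=\sup_n\lb h_i\cdot\nu_n,\indic{}\rb<+\infty$ (weakly convergent sequences of finite measures have bounded total mass) and the monotone truncations $h_i\wedge K\uparrow h_i$, one gets $\lb\nu,h_i\wedge K\rb=\lim_n\lb\nu_n,h_i\wedge K\rb\le\sup_n\lb\nu_n,h_i\rb$ for every $K$, whence $\lb\nu,h_i\rb<+\infty$. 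Second, $h_i\cdot\nu=\lambda_i$: testing against nonnegative $\vphi\in\Cc_b(E)$, one passes to the limit in $\lb h_i\cdot\nu_n,\vphi\rb=\lb\nu_n,(h_i\wedge K)\vphi\rb+\lb\nu_n,((h_i-K)^+)\vphi\rb$, using boundedness and continuity of $(h_i\wedge K)\vphi$ for the first term, and the tightness of the sequence $\{h_i\cdot\nu_n\}$ (a consequence of its weak convergence, via Prohorov) to make $\sup_n\lb\nu_n,(h_i-K)^+\rb$ arbitrarily small for large $K$; letting first $n\to\infty$ and then $K\to+\infty$ gives $\lb\lambda_i,\vphi\rb=\lb\nu,h_i\vphi\rb=\lb h_i\cdot\nu,\vphi\rb$, and the case of general $\vphi\in\Cc_b(E)$ follows by splitting into positive and negative parts. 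Once (a) and (b) hold, $\rho(h_i\cdot\nu_n,h_i\cdot\nu)=\rho(h_i\cdot\nu_n,\lambda_i)\to0$ for every $i$, and dominated convergence in the defining series yields $\rho_\Xc(\nu_n,\nu)\to0$; thus $\rho_\Xc$ is complete, and separability of $\Xc(E)$ is inherited from the product.

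The hard part will be step (b), i.e. the closedness of $T(\Xc(E))$: the weak convergence of $\nu_n$ does not transfer automatically to the reweighted measures $h_i\cdot\nu_n$, and the identification of their limit with $h_i\cdot\nu$ hinges on the interaction of the truncations $h_i\wedge K$ with weak convergence together with the tail (tightness) estimate above. This is exactly where regularity of the weight functions is used — I would run this step under the continuity assumptions on the $h_i$ relevant to the applications (notably $h_i=\mathrm{Id}$ in Section~\ref{sec:main}), for which $(h_i\wedge K)\vphi$ is bounded and continuous and the remainder term is genuinely negligible as $K\to+\infty$. All other steps (well-definedness of $T$, the homeomorphism property, the Polish/complete-metric facts for $(\Mc_b(E),w)$ and for countable products) are routine.
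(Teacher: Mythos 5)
Your proposal is correct and follows essentially the same route as the paper, whose proof consists of the single assertion that the properties of $(\Xc(E),w)$ derive from its identification with $\{\nu\in\Mc_b(E)\,:\,h_i\cdot\nu\in\Mc_b(E),\ \forall i\geq 0\}$, i.e.\ with the image of your embedding $T$ in the countable product $\prod_{i\geq 0}(\Mc_b(E),w)$; you supply the details (homeomorphism onto the image, closedness of the image, completeness of the pulled-back product metric) that the paper omits. Your caveat about continuity of the $h_i$ is well taken and substantive: closedness of $T(\Xc(E))$, hence completeness of $\rho_\Xc$, can genuinely fail for merely measurable weights (take $E=\Rb$, $h_1=\indic{\{0\}}$, $\nu_n=\delta_{1/n}$: this is $\rho_\Xc$-Cauchy with no limit in $(\Xc(E),w)$), but the weights used in the paper's applications are continuous, so your restricted argument covers what is actually needed.
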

 See for instance \cite[Section 6]{Billingsley99} for a definition of the Prohorov metric. 
 \begin{proof}
  The properties of $(\Xc(E),w)$ derive from its identification to the space $\{\nu \in \Mc_b \, : \,  h_i\cdot \nu \in \Mc_b(E)\,, \ \forall i\geq 0\}$.
 \end{proof}

\begin{lemma}[A criterion of weakly relatively compactness in $\Xc$]\label{lem:compactX}
Let $\Kc$ be a subset of  $\Xc$. Suppose it exists a non-negative measurable function $H$ on $E$ such that, for all $n>0$ the sets $K_n = H^{-1}([0,n])$ are compact in $E$. Moreover suppose it exists $n_0\geq0$ such that for all $i\geq 0$, and $x\in  K_{n_0}^c$, we have  $h_i(x)  \leq H(x)$. Assume further that it exists $\Phi \in \Uc_\infty$ (defined in Section \ref{sec:technical}) such that
\[ \sup_{\nu\in \Kc} \lb \nu ,  \indic{} +  H  + \Phi( H )  \rb < +\infty\,.\]
Then, $\Kc$ is relatively compact in $(\Xc(E),w)$. Moreover, let $\{\nu^\veps\}$ be a sequence in $\Kc$ and assume that $h_i$ is continuous for some $i\geq 1$, then up to a subsequence it exists $\nu\in\Xc(E)$ such that $\nu^\veps\to\nu$  in the weak topology as $\veps\to 0$ and  
\[ \lb \nu^\veps ,h_i  \rb \to \lb \nu ,h_i  \rb \,, \quad \text{ as }\ \veps\to 0\,.\]
\end{lemma}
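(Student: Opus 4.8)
The plan is to reduce, \emph{via} the identification of Lemma~\ref{lem:X_polish}, the weak relative compactness in $(\Xc(E),w)$ to Prohorov-type compactness for the pushforward measures $h_i\cdot\nu$. Recall from that lemma that a sequence $\{\nu^\veps\}\subset\Xc(E)$ is $\rho_\Xc$-Cauchy as soon as, for every $i\ge0$ (with the convention $h_0=\indic{}$), the sequence $\{h_i\cdot\nu^\veps\}$ is Cauchy in $(\Mc_b(E),\rho)$, $\rho$ the Prohorov metric, and then it converges in $(\Xc(E),w)$ by completeness. Hence it suffices to show that for each $i\ge0$ the family $\Kc_i:=\{h_i\cdot\nu:\nu\in\Kc\}$ is relatively compact in $(\Mc_b(E),\rho)$: from any sequence in $\Kc$, a diagonal extraction over $i$ then produces a subsequence along which $\{h_i\cdot\nu^\veps\}$ converges for every $i$, hence a $\rho_\Xc$-convergent subsequence with limit in $\Xc(E)$.

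To prove relative compactness of $\Kc_i$ I would invoke Prohorov's theorem, i.e.\ check uniform boundedness of total masses and tightness. For the masses, split $\langle\nu,h_i\rangle=\int_{K_{n_0}}h_i\,d\nu+\int_{K_{n_0}^c}h_i\,d\nu$; on $K_{n_0}^c$ the hypothesis $h_i\le H$ bounds the second integral by $\langle\nu,H\rangle$, while the first is at most $\big(\sup_{K_{n_0}}h_i\big)\langle\nu,\indic{}\rangle$, finite since $h_i$ is bounded on the compact $K_{n_0}$; both are controlled uniformly by $\sup_{\nu\in\Kc}\langle\nu,\indic{}+H+\Phi(H)\rangle<+\infty$. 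For tightness, use the compact sets $K_N$ with $N\ge n_0$: since $K_N^c=\{H>N\}\subseteq K_{n_0}^c$ we get $(h_i\cdot\nu)(K_N^c)\le\int_{\{H>N\}}H\,d\nu$, and because $\Phi\in\Uc_\infty$ satisfies $\Phi(t)/t\to+\infty$, for any $\delta>0$ there is $N$ with $\Phi(t)\ge(C/\delta)\,t$ for $t>N$ (where $C=\sup_{\nu\in\Kc}\langle\nu,\Phi(H)\rangle$), whence $\int_{\{H>N\}}H\,d\nu\le(\delta/C)\langle\nu,\Phi(H)\rangle\le\delta$ uniformly in $\nu$; for $i=0$ the same $K_N$ works directly by Markov's inequality applied to $\langle\nu,H\rangle$. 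This gives the first assertion.

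For the ``moreover'', let $\{\nu^\veps\}\subset\Kc$ and extract, by the above, a subsequence with $\nu^\veps\to\nu$ in $(\Xc(E),w)$ for some $\nu\in\Xc(E)$; in particular $\langle\nu^\veps,\vphi\rangle\to\langle\nu,\vphi\rangle$ for every $\vphi\in\Cc_b(E)$. To pass to $\langle\nu^\veps,h_i\rangle\to\langle\nu,h_i\rangle$ when $h_i$ is continuous, I would approximate $h_i$ from below by the bounded continuous functions $h_i\wedge M$ and write $\langle\nu^\veps,h_i\rangle=\langle\nu^\veps,h_i\wedge M\rangle+\langle\nu^\veps,(h_i-M)^+\rangle$. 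For each fixed $M$, $\langle\nu^\veps,h_i\wedge M\rangle\to\langle\nu,h_i\wedge M\rangle$; and for $M\ge\sup_{K_{n_0}}h_i$ the remainder is carried by $K_{n_0}^c$, where $(h_i-M)^+\le H\indic{\{H>M\}}$, so $\sup_\veps\langle\nu^\veps,(h_i-M)^+\rangle\le\sup_\veps\int_{\{H>M\}}H\,d\nu^\veps\to0$ as $M\to+\infty$ by the very estimate of the previous paragraph (and likewise $\langle\nu,(h_i-M)^+\rangle\to0$). Letting $\veps\to0$ and then $M\to+\infty$ in this uniform decomposition yields $\langle\nu^\veps,h_i\rangle\to\langle\nu,h_i\rangle$, and incidentally shows that $h_i\cdot\nu^\veps\to h_i\cdot\nu$ weakly, consistently with the $\rho_\Xc$-convergence.

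The step I expect to be the main obstacle is the treatment of the unbounded (and, before the ``moreover'', possibly non-continuous) weights $h_i$ in these limit passages: one must control the tails uniformly through $H$ and $\Phi(H)$, check that the candidate limit $\nu$ really lies in $\Xc(E)$, i.e.\ $\langle\nu,h_i\rangle<+\infty$ for every $i$ (which follows from Fatou's lemma together with the uniform mass bounds of the second paragraph), and verify that the pushforwards $h_i\cdot\nu^\veps$ converge weakly to $h_i\cdot\nu$ rather than to an unrelated measure. It is exactly here that the continuity of $h_i$ and the uniform integrability supplied by the superlinear function $\Phi\in\Uc_\infty$ are essential; the remainder of the argument is routine soft analysis.
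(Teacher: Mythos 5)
Your proposal is correct and takes essentially the same route as the paper: both arguments hinge on the tail estimate $\int_{K_n^c} h_i\,d\nu \le \sup_{y\ge n}\big(y/\Phi(y)\big)\,\langle \nu,\Phi(H)\rangle$ (together with $h_i\le H$ off $K_{n_0}$) to get uniform tightness and mass bounds for the weighted measures, followed by a diagonal extraction over $i$; your use of Prohorov's theorem for each family $\{h_i\cdot\nu\}$ plus completeness of $\rho_\Xc$, and the truncation $h_i\wedge M$ for the ``moreover'', are only cosmetic variants of the paper's work with $(\indic{}+h_i)\cdot\nu$ and the test functions $(1+h_i)^{-1}$ and $h_i/(1+h_i)$. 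The one point to note is that your bound $\sup_{K_{n_0}}h_i<\infty$ does not follow from mere measurability of $h_i$, but the paper's own proof makes the same implicit assumption, which is harmless since every application uses continuous $h_i$.
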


\begin{proof}
 The aim is to link these bounds to a criterion of weakly relatively compactness in $\Mc_b(E)$. Let $\nu$ in $\Kc$, then for $n\geq n_0$ and for all $i\geq 1$
 \begin{align*} 
 \lb (\indic{} + h_i ) \cdot \nu ,  \indic{K_n^c} \rb = \int_{K_n^c} \frac{1}{ H}  H +  \frac{ h_i }{\Phi(H )}\Phi( H)  \nu(dx)
 \leq \left( \frac 1 n +\sup_{y\geq n}  \frac{y}{\Phi(y)} \right) \sup_{\nu \in \Kc} \lb \nu ,   H + \Phi( H) \rb\,.
 \end{align*}
 When $n\to +\infty$ the right hand side goes to $0$. It yields $(\indic{} +  h_i ) \cdot \nu$ belongs to a weakly relatively compact set of $\Mc_b(E)$, see \cite[Chap. IX \S5.5, Theorem 2]{bourbaki_int_chap9}.  Let $\{\nu^\veps\}$ be a sequence in $\Kc$, there exist $\mu_i\in \Mc_b(E)$ and a subsequence (still indexed by $\veps$) such that $\{(\indic{}+ h_i  )\cdot \nu^\veps\}$ weakly converges to $\mu_i$ in $\Mc_b(E)$. Hence, by a diagonal process, for all $i\geq 1$ and for any $\vphi \in \Cc_b(E)$,
 \[ \lb (\indic{}+  h_i  )\cdot \nu^\veps ,\vphi \rb \to \lb \mu_i , \vphi  \rb \, .\]
 Since $\vphi =(\indic{}+ h_i )^{-1}$ is a continuous bounded function, it yields in particular, for all $i\geq 0$, $\nu^\veps \to \nu_i := (\indic{}+ h_i )^{-1} \cdot \mu_i$ in $\Mc_b(E)$. By the uniqueness of the limit, we define $\nu = \nu_i$. It readily follows that $\nu\in \Xc(E)$ and $\nu^\veps\to \nu$ in $w-\Xc(E)$. The last remark comes from the fact we can take  $ \vphi = h_i / (1+h_i)$ which is a bounded and continuous function.
  \end{proof}
Let us details two classical examples about the control of $x$-moments and a more complex applications useful for our purpose.  
  
  \begin{example}Take $E= [0,+\infty)$, the functions $h_1 = H = \Id$. It readily follows a compact criterion for the measure space   $\Xc([0,+\infty))$ defined by $\left\{ \vphantom{\sum_{i=1}^n} \nu \in \Mc_b([0,+\infty))\, : \,  \lb \nu ,  \Id   \rb < +\infty \right\}$.    
  \end{example}
  
 \begin{example} Take $E= [0,+\infty)$, the functions $h_i : x\mapsto x^i$ for $i = 1,\ldots, p$ and $H=h_p$. We have for all $x>1$ that $h_i(x) \leq H(x)$.  It readily follows a compact criterion for  $\Xc(E) : = \left\{ \vphantom{\sum_{i=1}^n} \nu \in \Mc_b(E)\, : \,  \lb \nu ,  h_i   \rb < +\infty \,, \ i = 1,\ldots, p \right\}$. Remark that $\Phi$ can be chosen as $x \mapsto x^{p+1}$.  
 \end{example}
  
  \begin{example} Take $E= \ell_1^+$ with the vague topology (see Remark \ref{topo_l1}), the functions $h_i(q) = q_i$ and $H(q) = \|q\|_{\ell_1}$. The $h_i$ are continuous and the pre-image by $H$ of any bounded set is compact in $E$ for the vague topology. It readily follows from the previous lemma a compact criterion for $\Xc(E) : = \left\{ \vphantom{\sum_{i=1}^n} \nu \in \Mc_b(E)\, : \,  \lb \nu ,  h_i   \rb < +\infty \,, \ \forall i\geq 0\right\}$. Remark that, if $\nu^\veps \to \nu$ in $w-\Xc$ then 
  \[\int_{\ell_1^+} q_i \nu^\veps(dq) \to \int_{\ell_1^+} q_i \nu(dq)\,,\ \forall i\geq 0\,.\]
  But since the norm is not continuous for the vague topology it appears clearly that we cannot hope the convergence of $\int_{\ell_1^+} \|q\| \nu^\veps(dq)$ to $\int_{\ell_1^+} \|q\| \nu(dq)$.
  \end{example}

\subsection{The space $\Yc$} \label{app:space_Y}

We proceed here to a slight adaptation of \cite{Kurtz1992}. Let $(h_i)_{i\geq1}$ be a sequence of measurable function on a Polish space $E$, and for any $t\geq 0$ we consider $\Xc([0,t]\times E)$ defined similarly to the previous section as a subset of $\Mc_b([0,t]\times E)$. Now, we consider the space
\[ \Zc(\Rb_+\times E) : = \Big\{ \Theta \in \Mc(\Rb_+\times E)\ : \ \forall t \geq 0, \, \Theta^t \in \Xc([0,t]\times E) \Big\}\,,\]
where $\Theta^t$ denotes the restriction of $\Theta$ to $[0,t]\times E$. We endow this space with the metric $\rho_\Zc$ given, for any $\Theta$ and $\Gamma$ belonging to $\Zc(\Rb_+\times E)$, by
\[ \rho_\Zc (\Theta,\Gamma)=\int_0^\infty e^{-t}\, 1 \wedge  \rho^t_\Xc( \Theta^t,\Gamma^t)\,dt\,,\]
 where $\rho_\Xc^t$ is the modified Prohorov metric on $\Xc([0,t]\times E)$. This metric defines the weak$^\#$ topology on $\Zc(\Rb_+\times E)$ and the space is denoted by $(\Zc(\Rb_+\times E),w^\#)$. Note that  a sequence $\{\Theta^\veps\} \subset \Zc$ converges in  $\rho_\Zc$ if and only if $\{\Theta^{\veps\,, t} \}$ converges in $\rho^t_\Xc$ for almost every $t$. The next three lemmas follow \cite[Appendix 2.6]{Daley2002} and \cite{Kurtz1992}.

\begin{lemma}
 The space $\Zc(\Rb_+\times E)$ equipped with the weak$^\#$ topology is a Polish space. 
\end{lemma}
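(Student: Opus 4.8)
The plan is to mirror, in our setting, the standard construction of the weak$^\#$ topology on spaces of boundedly finite measures of \cite[Appendix 2.6]{Daley2002} and \cite{Kurtz1992}, with the spaces of finite measures replaced by the spaces $\Xc([0,t]\times E)$, each of which is Polish with complete metric $\rho_\Xc^t$ by Lemma \ref{lem:X_polish}. Since "Polish" means separable and completely metrizable, and the metric $\rho_\Zc$ is given, there are three things to do: check $\rho_\Zc$ is a metric inducing the stated topology, check completeness, and check separability. For the first, I would note that $t\mapsto\Theta^t$ is right-continuous into $\Xc([0,t]\times E)$ for the weak topology (by continuity from above of the finite measures $h_i\cdot\Theta^{s_0}$, $s_0$ fixed, as $s\downarrow t$), so $t\mapsto 1\wedge\rho_\Xc^t(\Theta^t,\Gamma^t)$ is right-continuous hence Borel, and is bounded by $1$, so $\rho_\Zc$ is finite; symmetry and the triangle inequality are inherited from the $\rho_\Xc^t$ and monotonicity of the integral; and $\rho_\Zc(\Theta,\Gamma)=0$ forces $\Theta^t=\Gamma^t$ for Lebesgue-a.e.\ $t$, hence $\Theta=\Gamma$ because an element of $\Zc$ is determined by the consistent family $(\Theta^t)_{t\geq0}$ of its restrictions ($\Theta(A)=\lim_m\Theta^{t_m}(A\cap([0,t_m]\times E))$ along any co-null increasing $t_m\uparrow\infty$). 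That $\rho_\Zc$ induces exactly the weak$^\#$ topology, i.e.\ that $\rho_\Zc(\Theta^n,\Theta)\to0$ iff $\rho_\Xc^t(\Theta^{n,t},\Theta^t)\to0$ for a.e.\ $t$, is the classical equivalence between convergence in $L^1(e^{-t}\,dt;[0,1])$ and a.e.\ convergence along subsequences, combined again with the uniqueness just noted.

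For completeness, I would take a $\rho_\Zc$-Cauchy sequence $\{\Theta^n\}$, extract a subsequence with $\rho_\Zc(\Theta^{n_k},\Theta^{n_{k+1}})\leq 2^{-k}$, and apply Tonelli's theorem to $\sum_k\int_0^\infty e^{-t}\big(1\wedge\rho_\Xc^t(\Theta^{n_k,t},\Theta^{n_{k+1},t})\big)\,dt<\infty$ to obtain that for Lebesgue-a.e.\ $t$ the sequence $\{\Theta^{n_k,t}\}_k$ is $\rho_\Xc^t$-Cauchy, hence convergent to some $\Psi_t\in\Xc([0,t]\times E)$ by Lemma \ref{lem:X_polish}. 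The family $\{\Psi_t\}$ is consistent, $(\Psi_t)^s=\Psi_s$ for a.e.\ pair $s\leq t$, because $(\Theta^{n_k,t})^s=\Theta^{n_k,s}$ and restriction to $[0,s]\times E$ is weakly continuous at $\Psi_t$ as soon as $\Psi_t(\{s\}\times E)=0$, which fails for at most countably many $s$; assembling the countably many exceptional sets into one Lebesgue-null set yields a single $\Theta\in\Zc(\Rb_+\times E)$ with $\Theta^t=\Psi_t$ for a.e.\ $t$ (its bounded-time finiteness inherited from $\Psi_t\in\Xc([0,t]\times E)$). Then $\rho_\Zc(\Theta^{n_k},\Theta)\to0$ by dominated convergence, and a Cauchy sequence with a convergent subsequence converges, so $\Theta^n\to\Theta$.

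For separability, I would fix for each $n\geq1$ a countable dense subset $D_n$ of the Polish space $\Xc([0,n]\times E)$ (Lemma \ref{lem:X_polish}) and let $D$ be the countable set of all extensions-by-zero $\tilde\Psi$ of elements $\Psi\in\bigcup_n D_n$, where $\tilde\Psi$ agrees with $\Psi$ on $[0,n]\times E$ and gives no mass to $(n,\infty)\times E$; each such $\tilde\Psi$ lies in $\Zc(\Rb_+\times E)$. Given $\Theta\in\Zc$ and $\veps>0$, choose $n$ with $\int_n^\infty e^{-t}\,dt<\veps/2$ and $\Psi\in D_n$ with $\rho_\Xc^n(\Theta^n,\Psi)$ small; using again that restriction to $[0,t]\times E$ is weakly continuous at $\Theta^n$ for all but countably many $t\leq n$, one gets $\rho_\Xc^t(\Theta^t,\tilde\Psi^t)$ uniformly small for a.e.\ $t\leq n$, hence $\rho_\Zc(\Theta,\tilde\Psi)<\veps$; thus $D$ is dense.

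The only genuinely delicate point, which recurs in all three steps, is the behaviour of the restriction operation $\Xi\mapsto\Xi^s$: it is continuous for the (modified) Prohorov/weak topology precisely at measures carrying no mass on the time-fibre $\{s\}\times E$, and for a fixed measure this can fail on an at most countable set of times. The plan is therefore to state every intermediate claim with an "a.e.\ $t$" qualifier and to merge countably many such exceptional sets into a single Lebesgue-null set, exactly as in \cite{Kurtz1992,Daley2002}; once this bookkeeping is in place, the three parts above are routine consequences of Lemma \ref{lem:X_polish}.
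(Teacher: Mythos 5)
The paper gives no proof of this lemma at all: it simply states it (together with the two that follow) and defers to \cite[Appendix 2.6]{Daley2002} and \cite{Kurtz1992}. Your proposal is a faithful reconstruction of the standard argument from those references, and its three-part structure (metric/topology, completeness via a rapidly Cauchy subsequence and Tonelli, separability via dense sets in each $\Xc([0,n]\times E)$ extended by zero) is sound; in particular you correctly isolate the one real difficulty, namely that restriction $\Xi\mapsto\Xi^s$ is weakly continuous only at measures putting no mass on the fibre $\{s\}\times E$, and that for a fixed measure this fails for at most countably many $s$. The only place your write-up overreaches is in the separability step: from $\rho_\Xc^n(\Theta^n,\Psi)$ small you claim $\rho_\Xc^t(\Theta^t,\tilde\Psi^t)$ is \emph{uniformly} small for a.e.\ $t\leq n$, but pointwise continuity of the restriction at $\Theta^n$ gives a modulus depending on $t$, and mass of $\Psi$ sitting just below a time-fibre can make $\rho_\Xc^t(\Theta^t,\Psi^t)$ of order $1$ even when $\rho_\Xc^n(\Theta^n,\Psi)$ is tiny. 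The clean fix uses only tools you already have: take a sequence $\Psi_k\in D_n$ with $\rho_\Xc^n(\Psi_k,\Theta^n)\to0$; for every $t\leq n$ outside a countable set, $\rho_\Xc^t(\Psi_k^t,\Theta^t)\to0$ by the continuity you invoked, and since the integrand $1\wedge\rho_\Xc^t$ is bounded by $1$, dominated convergence gives $\rho_\Zc(\tilde\Psi_k,\Theta)\to0$. The same sequential-plus-dominated-convergence phrasing also tightens your "only if" direction of the equivalence between $\rho_\Zc$-convergence and a.e.-$t$ convergence of restrictions. With that adjustment the proof is complete and is exactly the argument the paper's citations intend.
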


\begin{lemma}
 The subspace of $\Zc(\Rb_+\times E)$ given by
 \[\Yc(\Rb_+\times E) := \{ \Theta \in \Zc(\Rb_+\times E)  \, : \, \Theta([0,t]\times E) = t \}\]
 and equipped with the topology induced by $\rho_\Zc$ is a Polish space.
\end{lemma}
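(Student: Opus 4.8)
The plan is to exhibit $\Yc(\Rb_+\times E)$ as a \emph{closed} subset of the Polish space $(\Zc(\Rb_+\times E),w^\#)$ furnished by the preceding lemma. Since a closed subspace of a complete separable metric space inherits both completeness (closed subsets of complete spaces are complete) and separability (subspaces of separable metric spaces are separable), that is all that is needed. So the whole task reduces to showing that $\Yc(\Rb_+\times E)$ is closed for the metric $\rho_\Zc$.

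First I would isolate an elementary regularity property of the ``clock'' functional. For any $\Theta\in\Zc(\Rb_+\times E)$ set $g_\Theta(t):=\Theta([0,t]\times E)$. This function is non-decreasing in $t$, and right-continuous: if $t_n\downarrow t$ then $[0,t_n]\times E\downarrow[0,t]\times E$, and since $\Theta^{t_1}$ is a finite measure, continuity from above gives $g_\Theta(t_n)\to g_\Theta(t)$. Hence, if $g_\Theta(t)=t$ for Lebesgue-almost every $t\geq0$, then $g_\Theta(t)=t$ for \emph{every} $t\geq0$: given $t_0$, choose $t_n\downarrow t_0$ in the full-measure set and pass to the limit. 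In particular, membership in $\Yc(\Rb_+\times E)$ need only be checked for a.e.\ $t$.

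Then I would take a sequence $\{\Theta^\veps\}\subset\Yc(\Rb_+\times E)$ with $\Theta^\veps\to\Theta$ in $\rho_\Zc$ as $\veps\to0$. From $\rho_\Zc(\Theta^\veps,\Theta)=\int_0^\infty e^{-t}\bigl(1\wedge\rho_\Xc^t(\Theta^{\veps,t},\Theta^t)\bigr)\,dt\to0$, the non-negative integrand tends to $0$ in $L^1(e^{-t}\,dt)$, so along a subsequence $\veps_k$ it converges to $0$ for a.e.\ $t$; that is, $\Theta^{\veps_k,t}\to\Theta^t$ in $\Xc([0,t]\times E)$ for a.e.\ $t$. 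Because $\rho_\Xc^t$ dominates a fixed multiple of $1\wedge\rho$, where $\rho$ is the ordinary Prohorov metric on $\Mc_b([0,t]\times E)$, and because the Prohorov distance controls the gap of total masses (testing the defining inequalities on the whole space gives $|\mu([0,t]\times E)-\nu([0,t]\times E)|\leq\rho(\mu,\nu)$), it follows that for a.e.\ $t$
\[
\Theta([0,t]\times E)=\lim_{k\to\infty}\Theta^{\veps_k}([0,t]\times E)=\lim_{k\to\infty}t=t .
\]
By the first step this upgrades to $\Theta([0,t]\times E)=t$ for all $t\geq0$, i.e.\ $\Theta\in\Yc(\Rb_+\times E)$, proving closedness and hence the lemma.

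The only subtle point — and the reason the monotonicity argument of the first step is indispensable — is that convergence for $\rho_\Zc$ gives convergence of the time-$t$ sections $\Theta^{\veps,t}$ only for Lebesgue-a.e.\ $t$ (and, a priori, only along a subsequence), so one cannot conclude directly from the continuity of any single functional $\Theta\mapsto\Theta([0,t]\times E)$; it is the right-continuity and monotonicity of $t\mapsto\Theta([0,t]\times E)$ that turns the a.e.\ identity into the identity for every $t$. Everything else is routine measure theory.
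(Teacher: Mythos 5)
Your proof is correct and follows the same route as the paper: exhibit $\Yc(\Rb_+\times E)$ as a closed subset of the Polish space $(\Zc(\Rb_+\times E),w^\#)$, using that weak convergence of the restrictions $\Theta^{\veps,t}$ forces convergence of the total masses $\Theta^\veps([0,t]\times E)$. The paper states this in one line; you additionally supply the (genuinely needed) upgrade from almost-every $t$ to every $t$ via monotonicity and right-continuity of $t\mapsto\Theta([0,t]\times E)$, a detail the paper's proof leaves implicit.
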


\begin{proof}
 We just remark that $\Yc(\Rb_+\times E)$ is a closed suset of $\Zc(\Rb_+\times E)$. Indeed, if $\{\Theta^\veps\}$ converges to $\Theta$ in $\rho_\Zc$, then  $\{\Theta^{\veps\,, t}\} \to \Theta^t$ in $\rho_t$ if and only if $\Theta^{\veps}([0,t]\times E) \to \Theta([0,t]\times E)$ as $\veps\to 0$.
 \end{proof}

\begin{lemma}[A criterion of weakly relatively compactness in $\Yc$] \label{lem:compactY}
 Let $\{t_k\}$ be a non-decreasing sequence in $\Rb_+$ such that $\lim_{k\to+\infty} t_k = +\infty$. Then, the set
 
 \[\Big\{ \Theta \in \Yc(\Rb_+\times E) \, : \, \forall k, \ \exists \text{ weak  compact } \Kc_k \subset \Xc([0,t]\times E),\ \Theta^{t_k} \in \Kc_k\Big\}\]
 is a compact of $(\Yc,\rho_\Zc)$.
\end{lemma}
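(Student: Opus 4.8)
\noindent My plan is to prove that the set in the statement — call it $\Lambda$, built from a fixed family $\{\Kc_k\}_{k\geq 0}$ of weakly compact subsets $\Kc_k\subset\Xc([0,t_k]\times E)$ — is \emph{sequentially} compact; since $(\Yc(\Rb_+\times E),\rho_\Zc)$ is Polish by the preceding lemma, this is equivalent to compactness. So I would start from an arbitrary sequence $\{\Theta^m\}\subset\Lambda$ and aim to extract a subsequence converging in $\rho_\Zc$ to an element of $\Lambda$.

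First I would perform a diagonal extraction over $k$: for each fixed $k$ the restrictions $\{(\Theta^m)^{t_k}\}_m$ all lie in the compact set $\Kc_k$, hence a subsequence converges in $\Xc([0,t_k]\times E)$; carrying this out successively for $k=0,1,2,\dots$ and passing to the diagonal subsequence (not relabelled) yields limits $\Theta^{(k)}\in\Kc_k$ with $(\Theta^m)^{t_k}\to\Theta^{(k)}$ in $\Xc([0,t_k]\times E)$ for every $k$. Next I would verify that $\{\Theta^{(k)}\}$ is a consistent family, \ie\ $(\Theta^{(k')})^{t_k}=\Theta^{(k)}$ whenever $k\leq k'$, and that it glues into a single Radon measure $\Theta$ on $\Rb_+\times E$ with $\Theta^{t_k}=\Theta^{(k)}$ for all $k$; this uses $t_k\uparrow+\infty$, so that the sets $[0,t_k]\times E$ exhaust $\Rb_+\times E$. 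The measure $\Theta$ then satisfies $\Theta^t\in\Xc([0,t]\times E)$ for all $t$ (being a restriction of some $\Theta^{(k)}$), $\Theta([0,t]\times E)=t$, and $\Theta^{t_k}\in\Kc_k$ (as $\Kc_k$ is closed), so $\Theta\in\Lambda$.

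The consistency and gluing step is where I expect the real work, and it is the main obstacle. Weak convergence of finite measures on $[0,t_{k'}]\times E$ does not automatically survive restriction to $[0,t_k]\times E$; the resolution is that each $\Theta^m$ has time-marginal equal to Lebesgue measure (because $\Theta^m([0,s]\times E)=s$ for all $s$), hence by the portmanteau theorem so does each $\Theta^{(k)}$, whence $\Theta^{(k)}(\{s\}\times E)=0$ for every $s$, and likewise $(h_i\cdot\Theta^{(k)})(\{s\}\times E)=0$ because the $h_i$ are $\Theta^{(k)}$-integrable. Since no mass sits on any time slice, the restriction maps $\Xc([0,t_{k'}]\times E)\to\Xc([0,t]\times E)$ are continuous at $\Theta^{(k')}$ for every $t\leq t_{k'}$; this simultaneously produces the consistency relations and, more generally, the convergence $(\Theta^m)^t\to\Theta^t$ in $\rho_\Xc^t$ for \emph{every} $t\geq 0$ (choose $k$ with $t_k>t$ and restrict).

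Finally I would conclude: writing $\rho_\Zc(\Theta^m,\Theta)=\int_0^\infty e^{-t}\bigl(1\wedge\rho_\Xc^t((\Theta^m)^t,\Theta^t)\bigr)\,dt$, the integrand is dominated by $e^{-t}$ and tends to $0$ pointwise in $t$ by the previous paragraph, so dominated convergence gives $\rho_\Zc(\Theta^m,\Theta)\to 0$. Hence $\Lambda$ is sequentially compact, therefore compact in $(\Yc,\rho_\Zc)$.
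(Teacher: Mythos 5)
Your proof is correct. The paper itself gives no proof of this lemma (it defers to Daley--Vere-Jones and Kurtz), and your argument is precisely the standard one from those references: diagonal extraction over $k$, consistency of the partial limits, gluing, and dominated convergence in the definition of $\rho_\Zc$. You also correctly identified and resolved the one genuinely delicate point, namely that the restriction maps $\Theta\mapsto\Theta^t$ are not continuous in general but are continuous at the limits here because the constraint $\Theta([0,s]\times E)=s$ survives weak convergence (portmanteau on $[0,s]\times E$ and $[0,s)\times E$ plus right-continuity), forces the time-marginal to be Lebesgue, and hence kills any mass on the slices $\{t\}\times E$ for both $\Theta^{(k)}$ and the weighted measures $h_i\cdot\Theta^{(k)}$.
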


\section{Stationary states and measures for Becker-D\"oring}\label{ssec:det_BD}
The aim of this appendix is to investigate the stationary measures of a modified Becker-D\"oring model represented for a given $\nu\in\Xc$ by an operator $ [\widetilde \Hc \cdot] (\nu, \cdot)$ defined by \eqref{gene-H}.

\noindent A stationary measure of such a model is a probability measure $\pi$ on $\ell_1^+$ solution of 
\begin{equation}\label{mean_equiv_stationary}
 \int_{\ell^+_1}\ [\widetilde \Hc g] (\nu, q) \pi(dq)= 0\,, \quad  \forall g\in \Gc\,.
\end{equation}
We start by studying the stationary states of $ [\widetilde \Hc \cdot] (\nu, \cdot)$, that is, the sequences $\overline q \in \ell_1^+$ satisfying
\begin{multline} \label{equiv_stationary}
 [\widetilde \Hc g] (\nu, \overline q)= 0\,, \quad \forall g\in \Gc
 \Leftrightarrow  \sum_{n\geq 0} Dg[\overline q](1_n) (J_{n-1}(\nu,\overline q) -J_n(\nu, \overline q)) = 0\,, \quad \forall g\in \Gc\,,
\end{multline}
where $J_{-1}=0$ and the Becker-D\"oring fluxes $J_n$ for $n\geq 0$ are given by \eqref{def:BD_fluxes} and are recalled in the next proposition.

\begin{proposition} \label{prop:stationary_BD}
Let $\nu\in \Xc$ such that $c=m-\lb \nu ,\Id \rb \geq 0$, the exponents $r_a, r_b$, the coefficients $\overline a$, $\overline b$ be given by Assumption \ref{hyp:coef_BD}, $\rho$ be defined by \eqref{def:rho} and the sequences $(a_n)_{n \in \Nb}$ and $(b_n)_{n\geq 1}$ by \eqref{def:an,bn} .
\begin{enumerate}
 \item In the case $r_a<r_b$, $r_a <1$, the Becker-D\"oring fluxes $J_n(\nu,q) = a_n c q_n$ for all $n\in \Nb$. If $c>\rho= 0$, then the unique stationary state is
\[ \overline q_n = 0\,, \quad \forall n\geq 0\,.\]
\item In the case $r_b<r_a$, $r_b <1$, the Becker-D\"oring fluxes $J_n(\nu,q) = - b_{n+1} q_{n+1}$ for all $n\in \Nb$. Then the stationary states are all given by
\[ \overline q_n = 0\,, \ \forall n\geq 1\,, \text{ and }\ \overline q_0  \geq 0\,.\] In particular, $\overline q_0 = \lVert \overline q \rVert_{\ell_1}$.
\medskip
\item In the case $r_a=r_b<1$, the Becker-D\"oring fluxes $J_n(\nu,q) = a_n c q_n - b_{n+1} q_{n+1}$ for all $n\in \Nb$. Denoting $Q_n= (\prod_{i=0}^{n-1} a_i/b_{i+1} )$ for all $n\in\Nb^*$ and $Q_0 = 1$, we have
$$1/\rho= \limsup_{n\to + \infty} Q_n^{1/n}= \dfrac{\overline a}{\overline b }\,.$$
Moreover, if $0\leq c < \rho$, then the stationary states are all given by
\[ \overline q_n = (Q_n c^n) \overline q_0\,, \ \forall n\geq 1\,, \text{ and }\ \overline q_0  \geq 0\,.\]
In particular $\lVert \overline q \rVert_{\ell_1} =  (\sum_{n\geq 0} Q_n c^n) \overline q_0$.
\item In the case $r_a=r_b<1$, if $c > \rho$, then the unique stationary state is
\[ \overline q_n = 0\,, \quad \forall n\geq 0\,.\]
\end{enumerate}

\end{proposition}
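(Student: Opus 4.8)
The plan is to reduce the notion of stationary state to a purely algebraic condition on the Becker--D\"oring fluxes $J_n$, and then to solve the resulting recursion in each of the four regimes and decide its summability in $\ell_1^+$. The key reduction is: a sequence $\overline q\in\ell_1^+$ is a stationary state, i.e.\ satisfies $[\widetilde\Hc g](\nu,\overline q)=0$ for all $g\in\Gc$, if and only if $J_n(\nu,\overline q)=0$ for every $n\geq0$. The ``if'' direction is immediate from \eqref{gene-H}. For the converse, I would fix $N\geq1$ and an arbitrary vector $v\in\Rb^N$; writing $P=(\overline q_0,\dots,\overline q_{N-1})$, if $P\neq0$ I choose $\chi\in\Cc_c^\infty(\Rb^N)$ equal to $1$ on a neighbourhood of $P$ with $0\notin\supp\chi$, and set $g(\varphi)=G(\varphi_0,\dots,\varphi_{N-1})$ with $G(x)=\langle v,x-P\rangle\,\chi(x)$. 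Then $G\in\Gc_N$ (in particular $G(0)=0$ and $\partial_nG\in\Cc_c^1$), and by \eqref{eq:frechet} one has $Dg[\overline q](1_n)=v_n$ for $n<N$. Substituting into the first expression in \eqref{gene-H} and letting $v$ range over $\Rb^N$ forces $J_{n-1}(\nu,\overline q)=J_n(\nu,\overline q)$ for $0\leq n\leq N-1$; letting $N\to\infty$ and using $J_{-1}=0$ yields $J_n(\nu,\overline q)=0$ for all $n\geq0$. (If $P=0$ for every $N$, then $\overline q\equiv\mathbf{0}$, which is trivially stationary with all fluxes zero, so no information is lost.)

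Next I would pin down $\rho$ in each regime. By \eqref{cv_a,b}, $b(x)/a(x)\sim(\overline b/\overline a)\,x^{r_b-r_a}$ as $x\to0^+$, so $\rho=0$ when $r_a<r_b$, $\rho=+\infty$ when $r_b<r_a$, and $\rho=\overline b/\overline a$ when $r_a=r_b$; together with \eqref{def:BD_fluxes} and \eqref{def:an,bn} this gives the forms of $J_n$ recalled in the statement. For $r_a<r_b$ (so $\rho=0$) and $c>\rho=0$, $J_n(\nu,q)=a_ncq_n$ with $a_n>0$, hence $J_n=0$ forces $\overline q_n=0$ for all $n$. For $r_b<r_a$ (so $\rho=+\infty$), $J_n(\nu,q)=-b_{n+1}q_{n+1}$ with $b_{n+1}>0$, hence $J_n=0$ for all $n\geq0$ forces $\overline q_n=0$ for $n\geq1$ while $\overline q_0\geq0$ is unconstrained, so $\lVert\overline q\rVert_{\ell_1}=\overline q_0$.

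The cases $r_a=r_b=:r<1$ are the \emph{heart of the matter}. Here $J_n(\nu,q)=a_ncq_n-b_{n+1}q_{n+1}$, so $J_n=0$ for all $n$ is the detailed-balance recursion $q_{n+1}=(a_nc/b_{n+1})\,q_n$, whose solutions are exactly $\overline q_n=Q_nc^n\,\overline q_0$ with $\overline q_0\geq0$. It remains to decide when this is an element of $\ell_1^+$. Substituting $a_n=\overline a(n+2)^r$ and $b_{n+1}=\overline b(n+3)^r$ from \eqref{def:an,bn}, the product $Q_n=\prod_{i=0}^{n-1}a_i/b_{i+1}$ telescopes to $Q_n=(\overline a/\overline b)^n\,(2/(n+2))^r$, so $Q_n^{1/n}\to\overline a/\overline b$ and $\limsup_nQ_n^{1/n}=1/\rho$. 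Consequently $\sum_nQ_nc^n=\sum_n(c/\rho)^n(2/(n+2))^r$ converges when $0\leq c<\rho$ (it is dominated by the geometric series $\sum_n(c/\rho)^n$), so every $\overline q_0\geq0$ yields a stationary state in $\ell_1^+$ with $\lVert\overline q\rVert_{\ell_1}=(\sum_nQ_nc^n)\,\overline q_0$; whereas when $c>\rho$ one has $Q_nc^n=(c/\rho)^n(2/(n+2))^r\to+\infty$ (exponential growth beats the polynomial factor, since $r\geq0$), so $\overline q\in\ell_1^+$ is possible only if $\overline q_0=0$, and then $\overline q\equiv\mathbf{0}$ is the unique stationary state.

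The main obstacle is the reduction step: carrying out the argument inside the restricted test class $\Gc$ requires some care with the cut-offs and the constraint $G(0)=0$, and with the passage $N\to\infty$. The other delicate --- though elementary --- point is the telescoping computation of $Q_n$, which is what ties the radius of convergence of $\sum_nQ_nc^n$, and hence the dichotomy between cases~3 and~4, precisely to the threshold $\rho$ defined in \eqref{def:rho}.
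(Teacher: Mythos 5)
Your proposal is correct and follows essentially the same route as the paper: reduce stationarity to the vanishing of every flux $J_n$ by testing against localized functions in $\Gc$ (the paper uses the even simpler single-coordinate choice $g(q)=G(q_n)$ with $G\in\Cc^2_c$, which already yields $J_{n-1}=J_n$ for each $n$), then solve the detailed-balance recursion, telescope $Q_n=(\overline a/\overline b)^n\,2^r/(n+2)^r$, and apply the Cauchy--Hadamard criterion to tie the $\ell_1$ dichotomy to the threshold $\rho$. No gaps.
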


\begin{proof}
Note that, by \eqref{equiv_stationary}, the state $\overline q$ is stationary if for all $g\in\Gc$,
\[ \sum_{n\geq 0} Dg[\overline q](1_n) (J_{n-1}(\nu,\overline q) -J_n(\nu, \overline q)) = 0\,.\]
In particular, applying for some functions $g$ depending on only one term of sequences of $\ell_1^+$, that is, $ g(q)= G (q_n)$ with $G \in \Cc^2_c$ for instance for a fixed $n$, we obtain
\[   J_n(\nu,\overline q) = 0\,, \quad \forall n\,.\]
The points 1. and 2. directly follows.
%
%

\noindent In the cases 3. and 4. in which $r=r_a=r_b$, we also deduce by induction that the stationary states are of the form
\[\overline q_n = (Q_n c^n) \overline q_0\,, \quad \forall n\geq 1\,.\]
Let us then prove that the radius of convergence of the sum $\sum Q_n c^n$ is $\rho={\overline b}/{\overline a }$. By the Cauchy-Hadamard Rule, this radius is $1/ \limsup_{n\to + \infty} Q_n^{1/n}$. Note that, since the $a_n$'s and the $b_n$'s are given by \eqref{def:an,bn}, the term $Q_n$ can be written for $n\geq 1$ as
\begin{align*}
Q_n= \prod_{i=0}^{n-1} \dfrac {a_i}{b_{i+1}} = \left(\dfrac{\overline a}{\overline b}\right)^n \ \dfrac{(2)^{r}\cdots ( n-1+2)^{r}}{(1+2)^{r}\cdots ( n+2)^{r}}= \left(\dfrac{\overline a}{\overline b}\right)^n \ \dfrac{(2)^{r}}{ (n+2)^{r}}\,.
\end{align*}
We thus immediately conclude that $\limsup_{n\to + \infty} Q_n^{1/n}= {\overline a}/{\overline b}= 1/\rho$ and the result is proved.

\noindent And, as $\overline q$ has to belongs to $\ell_1^+$, if $0\leq c <\rho$ the sum is convergent and we obtain point 3. If now $c>\rho$, the sum is not convergent so the unique solution is the null-sequence, giving point 4.

\end{proof}
We can now proceed to the identification of the stationary measures of a modified Becker-D\"oring model but, unfortunately, only in the cases 1. and 4. of the previous proposition.

\begin{proposition}\label{prop:mean_stationary_BD}
Let $\nu\in \Xc$ such that $c=m-\lb \nu ,\Id \rb > 0$. In the cases 1. and 4. of Proposition \ref{prop:stationary_BD}, the unique stationary measure of the modified Becker-D\"oring model represented by the operator $ [\widetilde \Hc \cdot] (\nu, \cdot)$ is the Dirac measure $\delta_{\mathbf{0}}$ at the null-sequence.
\end{proposition}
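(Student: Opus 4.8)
The plan is to work directly from the characterization \eqref{mean_equiv_stationary}: a probability measure $\pi$ on $\ell_1^+$ is stationary for $[\widetilde\Hc\,\cdot\,](\nu,\cdot)$ iff $\int_{\ell_1^+}[\widetilde\Hc g](\nu,q)\,\pi(dq)=0$ for all $g\in\Gc$, where by \eqref{gene-H} one has $[\widetilde\Hc g](\nu,q)=\sum_{n\ge0}Dg[q](\indic{n+1}-\indic{n})\,J_n(\nu,q)$ with $c=m-\brak{\nu}{\Id}>0$ fixed. First I would note that $\delta_{\mathbf 0}$ is stationary: the fluxes $J_n(\nu,\cdot)$ of \eqref{def:BD_fluxes} are linear and vanish at the null sequence, so $[\widetilde\Hc g](\nu,\mathbf 0)=0$ for every $g$. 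The point is then uniqueness, i.e.\ to show that an arbitrary stationary $\pi$ gives mass $0$ to $\{q:q_M>0\}$ for every $M$, hence equals $\delta_{\mathbf 0}$. In both cases I would test \eqref{mean_equiv_stationary} against a function $g=\Theta\circ L$, where $L$ is a linear functional of $q$ that serves as a Lyapunov function for the (deterministic) semiflow generated by $\widetilde\Hc(\nu,\cdot)$, and $\Theta\in\Cc^2$ is bounded, nondecreasing, $\Theta(0)=0$, $\Theta'\ge0$, $\Theta'>0$ on $(0,R)$ and $\Theta'\equiv0$ on $[R,\infty)$; such $g$ belong to $\Gc$ after the innocuous adjustment of $\Theta$ off $[0,\infty)$, which $\widetilde\Hc$ never sees.

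Case 1 ($r_a<r_b$, so $J_n(\nu,q)=a_ncq_n$): here the dynamics is pure outward transport and the partial masses $S_M(q)=q_0+\dots+q_M$ decrease along it, so I would take $L=S_M$, $g_M=\Theta\circ S_M$. The sum defining $[\widetilde\Hc g_M]$ telescopes down to the single boundary term $[\widetilde\Hc g_M](\nu,q)=-\Theta'(S_M(q))\,J_M(\nu,q)=-a_Mc\,\Theta'(S_M(q))\,q_M$, which is bounded (on $\{\Theta'(S_M(q))\neq0\}$ one has $q_M\le S_M(q)<R$). Inserting into \eqref{mean_equiv_stationary} and dividing by $a_Mc>0$ forces $\int\Theta'(S_M(q))q_M\,\pi(dq)=0$; the integrand being nonnegative and vanishing only when $q_M=0$ or $S_M(q)\ge R$, and $S_M<\infty$ on $\ell_1^+$, letting $R\to\infty$ along the integers yields $q_M=0$ $\pi$-a.s. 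As $M$ is arbitrary, $\pi=\delta_{\mathbf 0}$; this step uses no integrability assumption on $\pi$.

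Case 4 ($r_a=r_b<1$ and $c>\rho$, so $J_n(\nu,q)=a_ncq_n-b_{n+1}q_{n+1}$): the partial masses are no longer monotone, so I would use a geometrically weighted mass. Since $b_n/a_n\equiv\overline b/\overline a=\rho$ by \eqref{def:an,bn} and $c>\rho$, I may fix $s$ with $\rho/c<s<1$, so that $sca_m-b_m=a_m(sc-\rho)>0$ for every $m$; take $\phi_N(q)=\sum_{n=0}^N s^nq_n$, $L=\phi_N$, $g_N=\Theta\circ\phi_N$. The same telescoping gives $[\widetilde\Hc g_N](\nu,q)=\Theta'(\phi_N(q))\,\dot\phi_N(q)$ with $\dot\phi_N(q)=(s-1)\sum_{n=0}^{N-1}s^nJ_n(\nu,q)-s^NJ_N(\nu,q)$. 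Reorganising $\dot\phi_N$ by coordinate and using $s-1<0$, $sca_m-b_m>0$ (hence also $s>\rho/(\rho+c)$), one checks that the coefficient of $q_m$ is $\le-\lambda s^m$ for $0\le m\le N$, with $\lambda>0$ independent of $N$, while the only positive coefficient is the level-$N$ ``leak'' $s^Nb_{N+1}$ in front of $q_{N+1}$; hence $\dot\phi_N(q)\le-\lambda\phi_N(q)+s^Nb_{N+1}q_{N+1}$. With $\Theta'\ge0$ and $\|\Theta'\|_\infty=1$, \eqref{mean_equiv_stationary} then gives $\lambda\int\Theta'(\phi_N)\phi_N\,d\pi\le s^Nb_{N+1}\int q_{N+1}\,d\pi$; letting $R\to\infty$ (monotone convergence on the left, the right-hand side unchanged) yields $\lambda\int\phi_N\,d\pi\le s^Nb_{N+1}\int q_{N+1}\,d\pi$, and since $s<1$ makes $s^Nb_{N+1}=s^N\overline b(N+3)^{r_b}\to0$ while $\int q_{N+1}\,d\pi\le\int\lVert q\rVert_{\ell_1}\,d\pi<\infty$, letting $N\to\infty$ and using monotone convergence once more forces $\int\big(\sum_{n\ge0}s^nq_n\big)\,\pi(dq)=0$, i.e.\ $q=\mathbf 0$ $\pi$-a.s., so $\pi=\delta_{\mathbf 0}$.

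The hard part is the algebra of Case 4: recognising that a geometric weight with ratio in $(\rho/c,1)$ makes $\widetilde\Hc(\nu,\cdot)$ strictly dissipative on $\phi_N$ up to a leak of order $s^Nb_{N+1}q_{N+1}$, and then disposing of that leak by sending first the cutoff $R$ and then the truncation index $N$ to infinity. Two routine technical points must be dealt with as elsewhere in the paper: the test functions $g_M$, $g_N$ are read as elements of $\Gc$ after truncating $\Theta$ away from the positive cone (which the generator never sees), and in Case 4 one uses that the stationary measure has a finite first moment $\int\lVert q\rVert_{\ell_1}\,\pi(dq)<\infty$, which is automatic for the process $(\gamma_t)$ furnished by Theorem \ref{prop:cv_bord} since $\int(1+\lVert q\rVert_{\ell_1})\,\Gamma(ds\times dq)<\infty$; Case 1 requires no such hypothesis.
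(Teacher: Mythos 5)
Your argument is correct, but it follows a genuinely different route from the paper. The paper never works with Lyapunov-type test functions: it first localizes the stationarity identity coordinate by coordinate, by choosing $g^i(q)=\int_0^1 f(tq)\,q_i\,dt$ so that $Dg^i[q](1_n)=f(q)\indic{n=i}$, which yields $\int f(q)\,(J_{i-1}(\nu,q)-J_i(\nu,q))\,\pi(dq)=0$ for every $f\in\overline\Gc$; an induction starting from $J_{-1}=0$ then gives $\int f\,J_n(\nu,\cdot)\,d\pi=0$ for all $n$, a separate support lemma (Lemma \ref{stationary_measure}) upgrades this to ``$\pi$ is carried by the common zero set of all fluxes'', and finally Proposition \ref{prop:stationary_BD} identifies that zero set as $\{\mathbf 0\}$ in cases 1 and 4. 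You instead test directly against cutoffs of the partial mass $S_M$ (case 1) and of the geometrically weighted mass $\phi_N$ with ratio $s\in(\rho/c,1)$ (case 4), and extract $\pi=\delta_{\mathbf 0}$ from a telescoping/dissipativity estimate; I checked the algebra (the coefficient of $q_m$ is $-(1-s)s^{m-1}a_m(sc-\rho)$ for $1\le m\le N-1$, the level-$N$ coefficient is negative because $s>\rho/(\rho+c)$, and the only leak is $s^Nb_{N+1}q_{N+1}$, which dies since $s<1$), and the two-step limit $R\to\infty$, $N\to\infty$ is sound. What each approach buys: the paper's route is more modular — it also delivers the support statement used in the remark about cases 2 and 3, where the stationary set is not a single point — while yours is more quantitative and bypasses both the support lemma and Proposition \ref{prop:stationary_BD}, at the price of using the exact constancy of $b_n/a_n=\rho$ in case 4 and a finite first moment $\int\lVert q\rVert_{\ell_1}\,d\pi<\infty$ there (harmless in the application, since $\gamma_t$ inherits it from $\Gamma\in\Yc$, and comparable integrability is implicitly used by the paper as well). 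One caveat you should spell out more carefully: your remark that it suffices to ``adjust $\Theta$ off $[0,\infty)$'' does not by itself put $\Theta\circ S_M$ or $\Theta\circ\phi_N$ into $\Gc$, because the definition of $\Gc_N$ requires $\partial_nG\in\Cc^1_c(\Rb^N)$, and a cutoff in the total (weighted) mass is supported on a slab, not a compact set, of $\Rb^N$; this is fixable — e.g.\ replace the linear form by $\sigma(y)=\sum_n s^n\phi(y_n)$ with $\phi\in\Cc^2$, $\phi(t)=t$ for $t\ge0$, $\phi$ bounded below and tending to $+\infty$ as $t\to-\infty$, so that $G=\Theta\circ\sigma$ agrees with your test function on $\ell_1^+$ (which is all $\widetilde\Hc$ sees) and has compactly supported partial derivatives — but the fix should be stated, since the whole argument rests on these test functions being admissible.
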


\begin{remark}
In the cases 2. and 3. of Proposition \ref{prop:stationary_BD}, there is no uniqueness of the stationary states vanishing all the fluxes but an infinite collection parametrized by the first component $\overline q_0$. Because of this, there is also no uniqueness of the stationary measures of the associate modified Becker-D\"oring model. Indeed, following the proof of Proposition \ref{prop:mean_stationary_BD} here below, we can only conclude that, in these cases, a stationary measure is supported on all the stationary states. For instance, any probability measure, convex combination of Dirac measures at stationary states, is a stationary measure. This particularly implies that we are not able to identify the limit of the occupation measures $\Gamma$ in Theorem \ref{prop:cv_bord} in the cases 2. or 3.
\end{remark}
Before proving this result, we state a useful lemma requiring the introduction of a new space of functions from $\ell_1^+$ to $\Rb$. We denote by $\overline \Gc$ the set of functions $f$ from $\ell_1^+$ to $\Rb$ such that there exist $N' \geq 0$ and a function $F$ in $\Cc_c^1(\Rb^{N'})$ satisfying $f(q)= F(q_0,\dots, q_{N'-1})$ for all $q$ in $\ell_1^+$. This space can be understood as the set of functions obtained by taking the Fr\'echet Derivative of a function $g$ in $\Gc$ applied to a canonical sequence $\indic{n}$ for a given $n$, that is, $Dg[q](1_n)$.

\begin{lemma}\label{stationary_measure}
Let $V$ be a continuous function from $v-\ell_1^+$ to $\Rb$ such that there exist $N\geq 0$ and a continuous function $\overline V$ from $\Rb^N$ to $\Rb$ with $V(q)= \overline V(q_0,\dots, q_{N-1})$ for all $q$ in $\ell_1^+$. A probability measure $\pi$ satisfying
\begin{equation}\label{def_pi}
\int_{\ell_1^+} f(q) V(q)\, \pi(dq) = 0\,, \quad \forall f \in \overline \Gc\,,
\end{equation}
is supported on $Z(V):=\lbrace q \ \vert \  V(q)=0 \rbrace $.
 \end{lemma}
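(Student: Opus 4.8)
The plan is to show that the single equation \eqref{def_pi}, tested over the rich family $\overline\Gc$, forces $\pi$ to charge only the zero set of $V$. First I would reduce the problem to $\Rb^N$: since both $V$ and every $f\in\overline\Gc$ depend only on finitely many coordinates, I would take $N$ large enough that a given test function $f$ and $V$ both factor through the first $N$ coordinates, and push $\pi$ forward to a probability measure $\overline\pi$ on $[0,+\infty)^N$. The identity \eqref{def_pi} then reads $\int_{[0,\infty)^N} F(x)\,\overline V(x)\,\overline\pi(dx)=0$ for all $F\in\Cc^1_c(\Rb^N)$. The key step is a density/approximation argument: the product $F\cdot\overline V$ ranges, as $F$ varies over $\Cc^1_c$, over a set dense enough (in the sup norm on compacts, say, or by a monotone class / Stone--Weierstrass type argument) that we may effectively test against any continuous compactly supported function vanishing on $Z(\overline V)=\{x:\overline V(x)=0\}$. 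Concretely, for any $\psi\in\Cc_c(\Rb^N)$ supported away from $Z(\overline V)$, one writes $\psi = (\psi/\overline V)\cdot\overline V$ where $\psi/\overline V$ is continuous and compactly supported (since $\overline V$ is bounded away from $0$ on $\supp\psi$), approximates $\psi/\overline V$ uniformly by $\Cc^1_c$ functions, and concludes $\int \psi\,d\overline\pi = 0$. Hence $\overline\pi$ gives zero mass to every compact subset of $\{x:\overline V(x)\neq 0\}$, and by inner regularity of Radon (hence of finite Borel) measures on the Polish space $[0,\infty)^N$, $\overline\pi(\{\overline V\neq0\})=0$, i.e. $\overline\pi$ is supported on $Z(\overline V)$.

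Next I would lift this back to $\ell_1^+$. Because this holds for every admissible $N$ (every $N$ through which $V$ factors and with $N\geq N'$ for the test functions used), and the cylinder sets generate $\Bc(\ell^+_1)$ for the vague topology, one deduces $\pi\big(\{q\in\ell_1^+ : V(q)\neq 0\}\big)=0$, that is, $\pi$ is supported on $Z(V)=\{q:V(q)=0\}$. Some care is needed here: strictly speaking one shows $\pi$ is supported on the closure of $Z(V)$, but since $V$ is continuous on $v-\ell_1^+$, $Z(V)$ is already closed, so no loss occurs. I would also remark that the finiteness $\pi\in\Pc(\ell_1^+)$ is what allows the regularity argument; no integrability beyond boundedness is needed since the $F$'s are compactly supported.

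The main obstacle I anticipate is the approximation step in the regularity argument — namely justifying that testing against $F\cdot\overline V$ with $F\in\Cc^1_c$ is equivalent to testing against all of $\Cc_c$ restricted to $\{\overline V\neq0\}$. This is routine analysis (divide by $\overline V$ on the support, mollify), but it must be done carefully because $\overline V$ may have a complicated zero set and need not be bounded away from $0$ globally; the point is that on the \emph{compact} support of any fixed test function disjoint from $Z(\overline V)$ it is. A secondary, purely bookkeeping obstacle is keeping the cylinder dimension $N$ consistent between $V$ and the various $f\in\overline\Gc$; this is handled by always enlarging $N$ to dominate both, which is harmless since $F$ may be taken independent of the extra coordinates. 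Everything else — the pushforward to $\Rb^N$, inner regularity, and the passage from cylinders back to $\Bc(\ell_1^+)$ — is standard measure theory.
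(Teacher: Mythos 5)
Your argument is correct, and it follows a genuinely different route from the paper's. You reduce everything to $\Rb^N$ by pushing $\pi$ forward along the (vaguely continuous) coordinate projection, observe that $\int F\,\overline V\,d\overline\pi=0$ for all $F\in\Cc^1_c(\Rb^N)$, and then upgrade this to $\int\psi\,d\overline\pi=0$ for every $\psi\in\Cc_c$ supported off $Z(\overline V)$ by dividing by $\overline V$ on the compact support and mollifying; inner regularity of the finite Borel measure $\overline\pi$ then gives $\overline\pi(\{\overline V\neq 0\})=0$, hence $\pi(\{V\neq0\})=0$ since $\{V\neq0\}$ is exactly the preimage cylinder. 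The paper instead works directly on $(\ell_1^+,v)$ and argues on $\supp\pi$: a sign argument with a nonnegative bump $f$ (using that every point of the support charges its neighbourhoods) shows $\supp\pi\cap Z(V)^c$ has empty interior, after which the residual part of $\pi$ is treated as a sum of Dirac masses that are killed one by one with tailored test functions. Your version is the more robust of the two: it yields $\pi(Z(V)^c)=0$ directly from standard measure theory and sidesteps the delicate passage from ``empty interior'' to ``discrete'' in the paper, as well as the question of whether elements of $\overline\Gc$ (which are cylinder functions, hence never supported in a small vague ball) can really be localized as the paper's wording suggests — in effect both proofs rest on the same finite-dimensional reduction, but yours makes it explicit. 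The only points to write out carefully are the ones you already flag: the quotient $\psi/\overline V$ extends continuously by $0$ off $\supp\psi$ because the two open sets $\{\overline V\neq0\}$ and $(\supp\psi)^c$ cover $\Rb^N$, and the cylinder dimension $N$ must be taken large enough to dominate both $V$ and the test functions, which is harmless.
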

\begin{proof}
First note that all the measures supported on $Z(V)$ satisfy \eqref{def_pi}. Conversely let us prove that a measure $\pi$ such that \eqref{def_pi} holds is supported on $Z(V)$. We introduce $\Omega= \supp \ \pi \cap Z(V)^c$ with $Z(V)^c= \ell_1^+\setminus Z(V)$. We recall that the space $\ell_1^+$ is endowed with the vague topology and is metrizable as $(\Mc_b(\Nb),v)$.

\noindent We start by assuming that the interior of $\Omega$ is nonempty, ie
$
 \mathring{\Omega} \neq \emptyset\,, 
$
and let us fix an element $q^1$ in $ \mathring{\Omega}$. By definition $V(q^1)$ is either positive or negative. We here suppose that $V(q^1)>0$ (the other case is similar). Since the function $V$ is continuous, there exists $r_1>0$ such that $V$ is positive on $\overline \Bb(q^1,r_1)\subset  \mathring{\Omega}$, the closed ball of radius $r_1$ and center $q_1$. We then consider a function $f$ in $\overline \Gc$ such that
$$\begin{cases}
f(q)>0 \quad \text {for all $q$ in the open ball $ \Bb(q^1,r_1/2)$}\,,\\
f(q)\geq 0 \quad \text {for all $q$ in $\overline \Bb(q^1,r_1)$}\,,\\
f(q)=0 \quad \text{otherwise}\,.
\end{cases}$$
Applying \eqref{def_pi} with $f$, we have
\begin{equation*}
 0=\int_{\ell_1^+}f(q)V(q)\pi(dq)= \int_{\overline \Bb(q^1,r_1)}f(q)V(q)\pi(dq)\,.
\end{equation*}
Since $f(q)V(q)\geq 0$ for all $q\in \overline \Bb(q^1,r_1)$ and $f(q)V(q)> 0$ on $ \Bb(q^1,r_1/2)$, there is a contradiction. Thus, the set $\Omega$ has an empty interior and is therefore discrete. The measure $\pi$ restricted to $\Omega$ can be written as 
$$\pi_{\vert \Omega}= \sum_{i \in I} \lambda_i \delta_{q^i}\,,$$
with the $q^i$'s in $\Omega$ and $\lambda_i\geq 0$. Now using a test function $f$ in $\overline \Gc$ such that $f(q^i)= V(q^i)$ and $f(q^j)=0$ for all $j \neq i$, we can deduce that $\lambda_i=0$. This proves the result.
 \end{proof}
We are now in position to prove Proposition \ref{prop:mean_stationary_BD}.
\begin{proof}[Proof of Proposition \ref{prop:mean_stationary_BD}]
Assume that $\pi$ is a stationary measure, that is, satisfying \eqref{mean_equiv_stationary}.
For all $i\in\mathbb N$ and $f\in \overline \Gc$, we consider the function $g^i$ in $\Gc$ such that for all $q$ in $\ell_1^+$
$$Dg^i[q](1_n) = f(q) \indic{n=i}\,,$$
that is,
$$g^i(q)=\int_0^1f(tq)q_i \,dt\,.$$
Applying \eqref{mean_equiv_stationary} with $g=g^i$, we get
\[\int_{\ell_1^+} f(q) (J_{i-1}(\nu,q) -J_i(\nu,q)) \pi(dq) = 0\,.\]
Thus the measure $\pi$ satisfies
\[\int_{\ell_1^+} f(q) (J_{i-1}(\nu,q) -J_i(\nu,q)) \pi(dq) = 0\,, \quad \forall f \in \overline\Gc \text{ and } i \in \Nb\,.\]
Since for $i=0$, we have
\[\int_{\ell_1^+} f(q) J_0(\nu,q) \pi(dq) = 0\,, \quad \forall f \in \overline\Gc\,,\]
we can deduce by induction that, for all $n$ in $\Nb$ and $f$ in $ \overline\Gc$,
\[\int_{\ell_1^+} f(q) J_n(\nu,q) \pi(dq) = 0 \,.\]
Finally, applying Lemma \ref{stationary_measure} with $V=J_n(\nu,\cdot )$ for all $n$ in $\Nb$, the measure $\pi$ is supported on the sequences of $\ell_1^+$ vanishing all the fluxes $J_n$. By Proposition \ref{prop:stationary_BD}, the result follows.
\end{proof}


\begin{thebibliography}{99}

\bibitem{Aldous1999}
Aldous D.~J.~:
\newblock{Deterministic and Stochastic Models for Coalescence (Aggregation and
  Coagulation): A Review of the Mean-Field Theory for Probabilists,} \newblock {Bernoulli}, 5(1), 3--48 (1999).



\bibitem{Ball1988}
Ball J.~M., Carr J.~:
\newblock {Asymptotic Behaviour of Solutions to the Becker-D\"{o}ring Equations
  for arbitrary initial data},
\newblock { P. Roy. Soc. A-Math. Phy.}, 108(1-2), 109--116 (1988).

\bibitem{Ball1986}
Ball J.~M., Carr J., Penrose O.~:
\newblock {The Becker-D\"{o}ring Cluster Equations: Basic Properties and
  Asymptotic Behaviour of Solutions},
\newblock {Commun. Math. Phys.}, 104(4), 657--692 (1986).

\bibitem{Banks2014}
Banks H.~T., Doumic-Jauffret M., Kruse C.~:
\newblock {Efficient Numerical Schemes for Nucleation-Aggregation Models: Early
  Steps},  hal-00954437 (2014, preprint).

\bibitem{Becker35}
 Becker R., D\"oring W.~:
\newblock {Kinetische Behandlung der Keimbildung in \"ubers\"attigten D\"ampfern},  
\newblock {Ann. Phys.}, 24:719–752 (1935).

\bibitem{BHATT}
Bhatt J.~S., Ford I.~J.~:
\newblock {Kinetics of Heterogeneous Nucleation for Low Mean Cluster
  Populations},
\newblock {J. Chem. Phys.}, 118(7):3166 (2003).

\bibitem{Billingsley99}
Billingsley P.~:
\newblock {Convergence of Probability Measures},
\newblock {Wiley Series in Probability and Statistics}. John Wiley \& Sons,
  Hoboken (New Jersey), 2nd edition (1999).


\bibitem{bourbaki_int_chap9}
Bourbaki N.~:
\newblock {\'El{\'e}ments de math{\'e}matique: chapitre IX, Int{\'e}gration
  sur les espaces topologiques s{\'e}par{\'e}s},
\newblock {Hermann} (1969).

\bibitem{Boyer2005}
Boyer F.~:
\newblock {Trace Theorems and Spatial Continuity Properties for the Solutions of
  the Transport Equation},
\newblock {Differ. Integral Equations}, 18(8), 891--934 (2005).

\bibitem{Carr1999}
Carr J.,~Dunwell  R.~M.~:
\newblock {Asymptotic Behaviour of Solutions to the Becker-D\"{o}ring
  Equations},
\newblock {P. Edinburgh Math. Soc. (Series 2)}, 42(2), 415--424 (1999).



\bibitem{champagnat08}
Champagnat, N., Ferrière, R., Méléard, S.
\newblock {From individual stochastic processes to macroscopic models in adaptive evolution},
\newblock {Stoch. Models}, 24(1), 2--44 (2008).



\bibitem{Collet2002a}
Collet J., Goudon T.,~Vasseur A.~:
\newblock Some Remarks on Large-Time Asymptotic of the Lifshitz-Slyozov
  Equations.
\newblock {J. Stat. Phys.}, 108(1-2), 341--359 (2002).

\bibitem{Collet2000}
Collet J.-F.,~Goudon T.~:
\newblock {On Solutions of the Lifshitz-Slyozov Model},
\newblock {Nonlinearity}, 13(4), 1239-1262 (2000).

\bibitem{Collet2002}
Collet J.-F.,~Goudon T.,~Poupaud F.,~Vasseur A.~:
\newblock {The Becker-D\"{o}ring System and its Lifshitz-Slyozov Limit},
\newblock {SIAM J. Appl. Math.}, 62(5), 1488--1500 (2002).

\bibitem{Daley2002}
Daley D.,~Vere-Jones D.~:
\newblock {An Introduction to the Theory of Point Processes: Volume I},
\newblock Probability and Its Applications. Springer-Verlag, New York, 2nd
  edition (2003).
  


\bibitem{chauhoan1977}
L.~Ch\^{a}u-Ho\`an.
\newblock {\em Etude de la classe des opérateurs m-accr\'etifs de
  $L^1(\Omega)$ et accr\'etifs dans
  $L^{\infty}$}.
\newblock Th\`ese de troisi\`eme cycle, Universit\'e de Paris VI, 1977.


\bibitem{DLC}
D'Orsogna M.~R.,~Lakatos G.,~Chou T.~:
\newblock {Stochastic Self-Assembly of Incommensurate Clusters},
\newblock {J. Chem. Phys.}, 136(08):4110 (2012).



\bibitem{doumic}
Doumic M.,~Goudon T.,~Lepoutre T.~:
\newblock {Scaling Limit of a Discrete Prion Dynamics Model}.
\newblock {Commun. Math. Sci.}, 7(4), 839--865 (2009).



\bibitem{EthierKurtz}
Ethier S.~N., Kurtz T.~G.~:
\newblock {Markov Processes: Characterization and Convergence}.
\newblock Wiles Series in Probability and Statistics. John Wiley \& Sons,
  Hoboken (New Jersey), 2nd edition (2005).

\bibitem{Fournier}
Fournier N., Giet J.-S.~:
\newblock {Convergence of the Marcus-Lushnikov Process}.
\newblock {Methodol. Comput. Appl.}, 6(2), 219--231 (2004).

\bibitem{Fourniera}
Fournier N.,~Lauren\c{c}ot P.~:
\newblock {Marcus-Lushnikov Processes, Smoluchowski's and Flory's Models}.
\newblock {Stoch. Proc. Appl.}, 119(1), 167--189 (2009).

\bibitem{Fournier2004}
Fournier N.,~Mischler S.~:
\newblock {Exponential Trend to Equilibrium for Discrete Coagulation Equations
  with Strong Fragmentation and Without a Balance Condition}.
\newblock {P. Roy. Soc. A-Math. Phy.}, 460(2049), 2477--2486 (2004).
%

\bibitem{Helal2013}
Helal M.,~Hingant E.,~Pujo-Menjouet L., Webb G.~F.~:
\newblock Alzheimer's disease: Analysis of a mathematical model incorporating
  the role of prions.
\newblock {J. Math. Biol.}, 69(5), 1207--1235 (2014).

\bibitem{Hingant2015}
Hingant, E.
\newblock {The couterpart of the De La Vall\'ee-Poussin lemma.}
\newblock {personal's note} (2015).
\newblock {http://www.ci2ma.udec.cl/ehingant/}



\bibitem{Kurtz1992}
Kurtz T.~G.~:
\newblock Averaging for martingale problems and stochastic approximation.
\newblock In I.~Karatzas and D.~Ocone, editors, {Applied Stochastic
  Analysis}, volume 177 of {Lecture Notes in Control and Information
  Sciences}, 186--209., Springer, Berlin, Heidelberg (1992).

\bibitem{Laurencot2001}
Lauren\c{c}ot P.~:
\newblock Weak solutions to the Lifshitz-Slyozov-Wagner equation.
\newblock {Indiana U. Math. J.}, 50(3), 1319--1346 (2001).

\bibitem{Laurenccot2002}
Lauren\c{c}ot P.~:
\newblock {The Lifshitz-Slyozov-Wagner Equation With Conserved Total Volume}.
\newblock {SIAM J. Math. Anal.}, 34(2), 257--272 (2002).

\bibitem{Laurencot2002a}
Lauren\c{c}ot P.,~Mischler S.~:
\newblock {From the Becker-D\"{o}ring to the Lifshitz-Slyozov-Wagner
  Equations}.
\newblock {J. Stat. Phys.}, 106(5-6), 957--991 (2002).

\bibitem{LS61}
Lifshitz I.M.,~Slyozov V.V.~:
\newblock {The kinetics of precipitation from supersaturated solid solutions}.
\newblock {J. Phys. Chem. of Solids}, 19:35--50 (1961).





\bibitem{Niethammer2005}
Niethammer B.~:
\newblock {A Scaling Limit of the Becker-D\"oring Equations in the Regime of
  Small Excess Density},
\newblock {J.  Nonlinear Sci.}, 14(5), 453--468 (2004).

\bibitem{Niethammer2008}
Niethammer B.~:
\newblock Effective Theories for Ostwald Ripening,
\newblock {in P. M\"orters et al. (Ed.), Analysis and Stochastics of Growth Processes and Interface Models, Oxford University Press}, 223--242 (2008).

%
%

\bibitem{Penrose1997}
Penrose O.~:
\newblock {The Becker-D\"{o}ring Equations at Large Times and Their Connection
  with the LSW Theory of Coarsening},
\newblock {J. Stat. Phys.}, 89(1-2), 305--320 (1997).

\bibitem{Penrose2001}
Penrose O.~:
\newblock The Becker-D\"oring equations for the kinetics of phase transitions,
\newblock {Math. Proc. Camb. Phil. Soc.}, 96 (2001).


\bibitem{Prigent2012}
Prigent S.,~Ballesta A.,~Charles F.,~Lenuzza N.,~Gabriel P., Tine L.~M.,
~Rezaei  H.,~Doumic M.~:
\newblock An Efficient Kinetic Model for Assemblies of Amyloid Fibrils and Its Application to Polyglutamine Aggregation,
\newblock {PLoS ONE}, 7(11):e43273 (2012).

\bibitem{Protter}
Protter E.~P.~:
\newblock {Stochastic Integration and Differential Equations},
\newblock Springer, 2nd edition (2005).



\bibitem{EBELING}
Schweitzer F.,~Schimansky-Geier L.,~Ebeling W.,~Ulbricht H.~:
\newblock A Stochastic Approach to Nucleation in Finite Systems: Theory and
  Computer Simulations,
\newblock {Physica A}, 150, 261--279 (1988).



%

\bibitem{Velazquez1998}
Vel\'azquez J.~J.~L.~:
\newblock {The Becker-D\"oring Equations and the Lifshitz-Slyozov Theory of
  Coarsening},
\newblock {J. Stat. Phys.}, 92(1-2), 195--236 (1998).


\bibitem{Wattis2008}
Wattis J.~A.~D.~:
\newblock {An introduction to mathematical models of coagulation-fragmentation
  processes: A discrete deterministic mean-field approach},
\newblock {Physica D}, 222(1-2), 1--20 (2006).


\bibitem{Yvinec2012}
Yvinec R., D'Orsogna M.~R.,~Chou T.~:
\newblock First passage times in homogeneous nucleation and self-assembly,
\newblock {J. Chem. Phys.}, 137(24):244107 (2012).

\end{thebibliography}
\end{document}